\colorlet{mdtRed}{red!50!black}
\colorlet{dblue}{blue!50!black}
\renewcommand*{\backref}[1]{}
\renewcommand*{\backrefalt}[4]{[{%
		\ifcase #1 Not cited.%
		\or $\uparrow$~#2.%
		\else $\uparrow$~#2.%
		\fi%
	}]}
\newcommand{\pz}[1]{{\mathbb{Z}}_{+}^{#1}}
\newcommand{\nz}[1]{{\mathbb{Z}}^{#1}}
\newtheorem{theorem}{Theorem}[section]
\newtheorem{proposition}[theorem]{Proposition}
\newtheorem{corollary}[theorem]{Corollary}
\newtheorem{lemma}[theorem]{Lemma}
\newdefinition{remark}{Remark}[section]
\numberwithin{equation}{subsection}
\newtheorem{example}[theorem]{Example}
\newcommand{\proofofref}{}
\newproof{zproofof}{Proof of \proofofref}
\begin{document} 
	
	\title{On Serre dimension of monoid algebras and Segre extensions} 
	
	\author{Manoj K. Keshari}
	\ead{keshari@math.iitbac.in}

	\author{Maria A. Mathew\corref{cor1}}
	\ead{maria.math@iitb.ac.in}
	\address{Department of Mathematics, Indian Institute of Technology Bombay, Powai, Mumbai 400076, India.}
	
	\cortext[cor1]{Corresponding author}
	\begin{keyword}
		Unimodular elements \sep Serre dimension \sep Serre Splitting \sep Monoid algebra \sep Segre extension \sep monic inversion  \MSC[2020] 13C10
	\end{keyword}
	
	\begin{abstract}
		Let $R$ be a commutative noetherian ring of dimension $d$ and $M$ be a commutative$,$ cancellative$,$ torsion-free monoid of rank $r$. Then $S$-$dim(R[M]) \leq max\{1, dim(R[M])-1 \} =  max\{1, d+r-1 \}$. Further$,$ we define a class of monoids $\{\mathfrak{M}_n\}_{n \geq 1}$ such that if $M \in \mathfrak{M}_n$ is seminormal$,$ then $S$-$dim(R[M]) \leq dim(R[M]) - n= d+r-n,$ where $1 \leq n \leq r$. As an application, we prove that for the Segre extension $S_{mn}(R)$ over $R,$ $S$-$dim(S_{mn}(R)) \leq dim(S_{mn}(R)) - \Big[\frac{m+n-1}{min\{m,n\}}\Big] =  d+m+n-1 - \Big[\frac{m+n-1}{min\{m,n\}}\Big]$.
	\end{abstract}
	
	\maketitle
	
	
	\section{Introduction}
	
	In the search for an answer to his conjecture$,$ Serre \cite{serre-MR0177011} gave a splitting theorem which states that \textit{for a commutative noetherian ring $R$ of dimension $d,$ if the rank of an $R$-projective module $P$ exceeds $d,$ then $P$ admits a decomposition with a free direct summand}. This shrinks the class of projective $R$-modules one needs to study$,$ to projective modules of rank $\leq d$. Such a decomposition of $P$ is possible if there exists a $p \in P$ and a $\phi \in Hom_R(P,R)$ such that $\phi(p) = 1$. These elements are called \textit{unimodular elements} of $P$ and $Um(P)$ denotes the set of such elements. The \textit{Serre dimension} of $R,$ written as $S$-$dim(R),$ is defined to be the smallest integer $s,$ such that if $rank(P) \geq s+1,$ then $Um(P) \neq \emptyset$.
	
	Serre's splitting theorem thus gives $S$-$dim(R) \leq dim(R)$. Plumstead \cite{Plumstead-MR722004} proved $S$-$dim(R[X]) \leq dim(R)$. Bhatwadekar-Roy \cite{Bhat-Roy-MR727374} generalized the said splitting theorem to polynomial rings $R[X_1, \ldots, X_m]$ and in \cite{BLR-MR796196}$,$ Bhatwadekar$,$ et. al. extended this result further to Laurent polynomial rings $R[X_1, \ldots, X_m, Y_1^{\pm{1}}, \ldots, Y_n^{\pm{1}}]$. Lindel \cite{Lindel-MR1322406} gave an independent proof of the same by employing semi-linear maps on the graded structure of such rings.  
	
	In another direction$,$ Weimers \cite{Weimers-unimodularMR1176904} proved the result for discrete Hodge algebras. When $R$ is a PID$,$ the corresponding result was proved by Gubeladze \cite{Gubeladze2-MR937805} for monoid algebras $R[M],$ where $M$ is a commutative$,$ cancellative$,$ torsion-free and seminormal monoid. He further conjectured in \cite{Gubeladze_toric-MR1693095}$,$ the existence of unimodular elements in a general setup $R[M]$. Swan in \cite{Swan-MR1144038}$,$ proved such an existence for any Dedekind domain $R,$ when $rank(P) \geq 2$ and $M$ is a commutative$,$ cancellative and torsion-free monoid. Keshari-Sarwar in \cite{MKK&HPS-MR3647151}$,$ gave an affirmative answer to the same for a certain class of monoids $\mathcal{C}(\phi),$ which covered the case for positive rank $2$ normal monoids. We prove the splitting theorem in the top rank case (see Theorem \ref{mmt1}):
	
	\begin{theorem}\label{mt1}
		Let R be a ring of dimension d and $M$ be a monoid of rank $r \geq 1$. Then $S$-$dim(R[M]) \leq max\{ 1, dim(R[M])-1\} = max \{1, d + r - 1\}$.
	\end{theorem}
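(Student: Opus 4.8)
The plan is to start from Serre's splitting theorem \cite{serre-MR0177011}, which already gives $S\text{-}dim(R[M]) \le dim(R[M]) = d+r$, and to gain one extra unit by exploiting the nontrivial $\mathbb{Z}$-grading that the hypothesis $r \ge 1$ forces on $R[M]$. Concretely, it suffices to prove that every finitely generated projective $R[M]$-module $P$ with $rank(P) \ge d + r$ admits a unimodular element; when $d + r - 1 \le 0$, i.e. $d = 0$ and $r = 1$, the claim is just Serre's theorem for the one-dimensional ring $R[M]$, which is why the bound is written with $\max\{1, \cdot\}$. First I would reduce to the case where $M$ is finitely generated, so that $R[M]$ is noetherian of dimension $d + r$: since $P$, a chosen surjection $R[M]^N \twoheadrightarrow P$, and its splitting are all finitely presented, they descend to $R[M_0]$ for some finitely generated submonoid $M_0 \subseteq M$ that we may take of full rank $r$; a unimodular element of the descended module over $R[M_0]$ pushes forward to one of $P$, and $dim(R[M_0]) = d + r$ as well.

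Next, using that $rank(M) = r \ge 1$, I would choose a surjective homomorphism $\pi \colon M^{gp} = \mathbb{Z}^r \to \mathbb{Z}$; this grades $R[M]$ by $\mathbb{Z}$, and I would pick $m \in M$ with $\delta := \pi(m) > 0$ and localize at the homogeneous element $f = x^m$. In $R[M]_f$ the element $x^m$ is a homogeneous unit of positive degree, so by the standard structure theory of $\mathbb{Z}$-graded rings possessing a homogeneous unit, $R[M]_f \cong E[Y, Y^{-1}]$ is a Laurent polynomial extension of a noetherian ring $E$ with $dim(E) = dim(R[M]_f) - 1 \le d + r - 1$. Now the Bhatwadekar--Roy and Bhatwadekar--Lindel--Rao results \cite{Bhat-Roy-MR727374, BLR-MR796196} give $S\text{-}dim(E[Y,Y^{-1}]) \le dim(E) \le d + r - 1$; since $P_f$ has rank $\ge d + r = (d + r - 1) + 1$, this yields $Um(P_f) \ne \emptyset$. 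This is exactly where the decisive improvement over Serre's bound enters: it is the Laurent variable $Y$ that buys the extra dimension, just as in Plumstead's theorem \cite{Plumstead-MR722004}.

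The remaining and hardest step is the descent: producing a unimodular element of $P$ itself from one of $P_f$. I expect this to be the main obstacle, and the place where the grading is used essentially rather than formally --- this is the \emph{monic inversion} mechanism. The point is that inverting $x^m$ is a genuine localization rather than the inversion of a single polynomial variable, so a bare unimodular element over $R[M]_f$ need not extend across the locus where $x^m$ vanishes; one must exploit the homogeneity of $x^m$ (its role as a monic, leading term with respect to the $\pi$-grading) together with a Quillen-type local--global principle adapted to the graded monoid ring, glued with Serre's theorem on the lower-dimensional complementary data, to transfer unimodularity down to $R[M]$. Verifying that this patching goes through for an \emph{arbitrary} monoid $M$ --- in particular one that is neither positive nor seminormal --- so that no normality hypothesis is needed for the weaker $-1$ bound, is the technical heart of the argument; the sharper $-n$ bounds obtained later will instead invoke seminormality precisely in order to strengthen this last step.
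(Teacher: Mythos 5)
Your proposal has two genuine gaps, and the second is fatal as written. First, the structural claim that $R[M]_f \cong E[Y,Y^{-1}]$ with $\dim E \leq d+r-1$ is unjustified for an arbitrary monoid $M$. Localizing at $f = x^m$ replaces $M$ by the localized monoid $m^{-1}M$, whose unit group contains $\mathbb{Z}m$; but to write $R[m^{-1}M]$ as a Laurent extension you need the units to split off as a direct summand, $m^{-1}M \cong \mathbb{Z} \oplus N'$, and this splitting (Gubeladze's Proposition 2.26, which the paper invokes in Corollary \ref{c4}) requires $M$ \emph{normal}. Moreover a homogeneous unit of degree $\delta > 1$ in a $\mathbb{Z}$-graded ring does not make the ring Laurent over a subring of smaller dimension, and you cannot in general arrange $\delta = 1$: for $M = \mathbb{Z}_+[2,3] \subset \mathbb{Z}$ no element of $M$ is primitive in $gp(M)$. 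So the very first step already silently assumes normality, whereas the theorem claims the bound for all (cancellative torsion-free) monoids. Second, and decisively, the descent from $Um(P_f) \neq \emptyset$ to $Um(P) \neq \emptyset$ --- which you yourself identify as the technical heart --- is not an available tool you can cite: the known monic-inversion principles ((\cite{BLR-MR796196}, Theorem 5.2) and the paper's own Theorem \ref{mmt4}) concern $f$ monic in an actual variable $t_1$, require $R$ normal and $M$ normal $\phi$-simplicial, and even then need extra hypotheses on the automorphisms. Inverting a monomial $x^m$ in a general monoid algebra is a localization for which no such patching principle exists in the literature, and the paper develops its monic-inversion theorem only in Section 5.3, \emph{after} and independently of Theorem \ref{mt1}, precisely because it does not feed into the top-rank case.

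The paper's actual proof avoids monic inversion entirely by localizing at an element $s \in R$ rather than at a monomial. It reduces to $R$ reduced and $M$ affine seminormal (Lemma \ref{pl1} plus Theorem \ref{pt1}, legitimate since $rank(P) \geq 2$), then inducts on $d = \dim R$: the case $d=0$ is Gubeladze's freeness theorem over fields, and $r=1$ is \cite{BLR-MR796196} since seminormality forces $M = \pz{}$ or $\nz{}$. For $d > 0$ one finds a non-zerodivisor $s \in R$ with $P_s$ free, gets $Um(P/sP) \neq \emptyset$ by induction, lifts to $p \in P$ with $O_P(p) + sA = A$ and $ht(O_P(p)) \geq d+r$, and then --- using Lemma \ref{l2} to split the generators of $M$ into Laurent-type units and a positive part --- produces via Lemma \ref{l1} and Mandal's lemma a monic (resp.\ special monic) in $O_P(p)$ in \emph{each} generator, making $R/(O_P(p)\cap R) \hookrightarrow A/O_P(p)$ integral. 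The conclusion then follows by patching over $A_s$ and $A_{1+sR}$ (Proposition \ref{p2}), i.e.\ standard Quillen-type gluing along a multiplicative set in $R$, where descent is unproblematic. If you want to salvage your outline, you would have to either restrict to normal $M$ and still prove a monomial-inversion descent from scratch, or switch the localization from $x^m$ to $s \in R$ as the paper does.
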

	
	In Lemma \ref{l2}$,$ we show the existence of a positive submonoid $V$ of $M$ such that $M = U(M)V$ and study $S$-$dim(R[M])$. As a consequence$,$ for rank 2 normal monoids (not necessarily positive) we obtain $S$-$dim(R[M]) \leq dim(R)$. To tackle the case when $rank(P) < dim(R[M]),$ we define a descending chain of class of monoids $\{\mathfrak{M}_n\}_{n \geq 1},$ and prove the following (see Theorem \ref{mmt2}):
	
	\begin{theorem}\label{mt2}
		Let R be a ring of dimension d and $A=R[M],$ where $M \in \mathfrak{M}_n$ is a seminormal monoid of rank $r \geq 1$.  Assume $P$ to be a projective $A$-module of $rank > dim(A) - n = d +r-n$. Then 
		\begin{enumerate}
			\item the map $Um(P) \rightarrow Um(P/A_+^1P)$ is surjective and
			\item $S$-$dim(A) \leq dim(A) - n = d + r - n$.
		\end{enumerate}
		In particular$,$ if $M \in \mathfrak{M}_r,$ then $S$-$dim(A) \leq d $.
	\end{theorem}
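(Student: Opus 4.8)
The plan is to establish both parts simultaneously by induction on $n$, with Theorem \ref{mt1} furnishing the base case $n=1$ (where the two bounds $d+r-1$ coincide). The logical spine is that part (1) is the engine and part (2) is its consequence: once unimodular elements are known to lift along the reduction map $Um(P) \to Um(P/A_+^1P)$, the Serre bound follows by descending to the quotient algebra and feeding in the inductive hypothesis.

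Granting part (1), here is how I would derive part (2). Write $\bar{A} = A/A_+^1$ and $\bar{P} = P/A_+^1P$. The defining feature of the chain $\{\mathfrak{M}_n\}_{n\geq 1}$ should be that passing to this quotient strips off one generating direction of $M$ while dropping one level of the filtration, so that $\bar{A} \cong R[\bar{M}]$ with $\bar{M} \in \mathfrak{M}_{n-1}$ seminormal of rank $r-1$; in particular $dim(\bar{A}) = d+(r-1)$. The inductive hypothesis then gives $S$-$dim(\bar{A}) \leq (d+r-1)-(n-1) = d+r-n$. Since $rank(\bar{P}) = rank(P) > d+r-n$, the module $\bar{P}$ carries a unimodular element, and part (1) lifts it to one in $Um(P)$. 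Thus every projective of rank exceeding $d+r-n$ splits off a free summand, i.e. $S$-$dim(A) \leq d+r-n$. The concluding assertion is the terminal case $n=r$, where after $r-1$ reductions one reaches $\mathfrak{M}_1$ in rank $1$ and invokes Theorem \ref{mt1} to obtain the bound $d$. Two bookkeeping points need checking: that seminormality descends to $\bar{M}$ (which should hold because $\bar{M}$ arises as a quotient, or face, of the seminormal $M$), and that the rank and dimension counts keep the target $d+r-n$ invariant under each reduction.

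The genuine obstacle is part (1), the surjectivity of the lifting map, and this is where I would concentrate the technical work, via the monic inversion method. After reducing to a reduced ring $R$ (both the unimodularity and the seminormality hypotheses are insensitive to the nilradical), I would exploit the grading $M \to \mathbb{Z}_+$ attached to the chosen direction to locate an element of $A$ that is monic in the corresponding variable. Inverting such a monic element places us in a Laurent-polynomial-type extension, where the known splitting theorem over Laurent polynomial rings supplies the lift over the localization. One then descends this lift to $A$ by a Horrocks-type local--global principle (Quillen patching adapted to $R[M]$): one checks liftability at each maximal ideal of $R$ and patches the local solutions. Seminormality enters decisively at the descent, forcing the relevant projective modules to be extended from the subring, in the spirit of Gubeladze, which is exactly what makes the patched element unimodular globally rather than only locally.

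The delicate part of this last step is compatibility with the inductive framework: the monic element and its localization must respect the $\mathfrak{M}_n$-structure, so that the quotient $\bar{A}$ again lies in the chain and the induction for part (2) can proceed unobstructed. I expect the bulk of the effort to go into verifying that the filtration $A_+^1 \supseteq A_+^2 \supseteq \cdots$ cut out by the $\mathfrak{M}_n$-structure interacts correctly with both the monic inversion and the patching, so that threading part (1) through the induction loses no compatibility.
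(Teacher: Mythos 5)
Your reduction of part (2) to part (1) matches the paper's own (very compressed) argument: induct on $n$, pass to $\bar{A}=A/A_+^1 \simeq R[\widehat{M}_1]$, use condition (3) of the definition of $\mathfrak{M}_n$ to get $\widehat{M}_1 \in \mathfrak{M}_{n-1}$ seminormal of rank $r-1$, apply the inductive bound to $\bar{P}$, and lift via part (1); the base case $n=1$ is Theorem \ref{mmt1}, exactly as you say. One correction to your bookkeeping: there is no filtration $A_+^1 \supseteq A_+^2 \supseteq \cdots$. The ideals $A_+^j$ are the irrelevant ideals of \emph{different} gradings of $A$ (by $t_j$-degree) and are not nested; the descending chain lives in the classes $\mathfrak{M}_n$ themselves, via $\widehat{M}_1 \in \mathfrak{M}_{n-1}$.

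The genuine gap is your proof of part (1), which is both incomplete and pointed in the wrong direction. Inverting an element monic in $t_1$ does not place $R[M]$ in a ``Laurent-polynomial-type extension'' for a general $M \in \mathfrak{M}_n$ (which need not even be $\phi$-simplicial), and there is no Horrocks/Quillen-type local--global principle for the \emph{existence or lifting of unimodular elements} that you could patch with --- Quillen patching governs extendedness of modules, not unimodularity. Most tellingly, your sketch never invokes the defining condition (2) of $\mathfrak{M}_n$: the $n$-element algebraically independent sets $S_j$ attached to each generator $U_j \in gen(M)_1$, together with a common automorphism $\widetilde{\eta}$ turning quasi-monics into monics in the $U_j$ with coefficients in $A_0^1$. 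That condition is the only place where the hypothesis $rank(P) > d+r-n$ can be cashed in, so an argument that ignores it cannot explain the improved bound. The paper instead inducts on $d$: it finds a non-zerodivisor $s$ with $P_s$ free, uses the Milnor patching of Proposition \ref{p2} to produce $\bar{p} \in Um(P/sA_+^1P)$ lifting the given class $p_1$, moves $p$ (Lindel's lemma) so that $ht(O_P(p)) \geq d+r-n+1$, extracts via Lemma \ref{l1} a quasi-monic $f_j \in O_P(p)\cap R[S_j]$ for every $j$ (this is exactly where the rank hypothesis enters), applies the $\mathfrak{M}_n$-automorphism $\eta$ (which fixes $t_1$, preserving $O_P(p)+sA_+^1=A$) to make all $f_j$ monic in $U_j$ over $A_0^1$, whence $A_0^1/(O_P(p)\cap A_0^1) \hookrightarrow A/O_P(p)$ is integral, and concludes with Lindel's graded Proposition \ref{pp2}, which returns $p_0 \in Um(P)$ with $p_0 \equiv p \equiv p_1$ modulo $A_+^1P$. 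Seminormality, incidentally, enters not at a ``descent'' step but at $d=0$ (freeness via Theorem \ref{tp2}) and through the induction on $\widehat{M}_1$. The monic-inversion machinery you propose does appear in the paper, but only in Theorem \ref{mmt4}, under substantially stronger hypotheses ($R$ normal, $M$ a normal $\phi$-simplicial monoid, a constraint on $\widetilde{\eta}$, and $Um(P_f)\neq\emptyset$ \emph{assumed}), and even there it yields only a conditional lifting statement, not Theorem \ref{mt2}.
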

	
	Utilizing the techniques developed$,$ we discuss examples of monoids in $\mathfrak{M_n}$. In Section 5$,$ we show the existence of unimodular elements of projective modules over Segre extension $S_{mn}(R)$ of $R$ and as it's application$,$ investigate the Serre dimension of Rees algebras $R[\mathcal{I}t]$. For $m, n \in \mathbb{Z}_{>0},$ define $Y$ to be the $m \times n$ matrix of indeterminates $y_{ij}$ for $1 \leq i \leq m$ and $1 \leq j \leq n$. Let $I$ be the ideal of the polynomial algebra $R[y_{ij}~|~1 \leq i \leq m, 1 \leq j \leq n] = R[F]$ generated by the binomial relations obtained from the $2 \times 2$ minor of $Y$. Then the Segre extension $S_{mn}(R)$ of $R$ over $mn$ variables is defined as $ S_{mn}(R) = R[F]/I$. From (\cite{Swan-MR1144038}$,$ Lemma 12.11)$,$ $S_{22}(R)$ is a monoid algebra over $R$. If $R$ is a field$,$ then Lindel (\cite{Lindel-MR1322406}$,$ Example 1.9) proved that projective $S_{22}(R)$-modules are free. This was later extended by Swan (\cite{Swan-MR1144038}$,$ Theorem 1.5) for any Dedekind domain $R$. Krishna-Sarwar (\cite{Krishna-Parvez-MR3995720}$,$ Theorem 1.3) proved that if $d=1$ and $\mathbb{Q} \subset R,$ then $S$-$dim(S_{22}(R)) \leq 1$. In a general $m \times n$ setup$,$ we prove that the monoid corresponding to $S_{mn}(R)$ is a member of $\mathfrak{M}_{k(m,n)},$ where $k(m,n) = \Big[\frac{m+n-1}{min\{m,n\}}\Big]$ (see Theorem \ref{mmt3}): 
	
	\begin{theorem}\label{mt3}
		Let $R$ be a ring of dimension $d$ and $A= S_{mn}(R)$ be the Segre extension of $R$ over mn variables. Let $k(m,n) = \Big[\frac{m+n-1}{min\{m, n\}}\Big]$. Then there exists a monoid $M \in \mathfrak{M}_{k(m,n)}$ such that $A \simeq R[M]$. As a consequence$,$ $S$-$dim(A) \leq dim(A) - k(m,n) = d + m + n - 1 - k(m,n)$. In addition$,$ if $N \in PS(M)$ is a seminormal monoid$,$ then $N \in \mathfrak{M}_{max\{m,n\}} \subset \mathfrak{M}_{k(m,n)}$.
	\end{theorem}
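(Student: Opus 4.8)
The plan is to reduce everything to the single membership claim $M\in\mathfrak{M}_{k(m,n)}$ and then invoke Theorem \ref{mt2}. First I would make the monoid explicit. Realize $A=S_{mn}(R)$ as $R[M]$, where $M$ is the Segre submonoid
\[
M=\{(a,b)\in\mathbb{Z}_{+}^{m}\oplus\mathbb{Z}_{+}^{n}\ :\ |a|=|b|\}\subset\mathbb{Z}^{m+n},
\]
generated by the degree-one elements $g_{ij}=e_i\oplus f_j$; the assignment $y_{ij}\mapsto g_{ij}$ carries the ideal $I$ of $2\times2$ minors precisely onto the binomial relations $g_{ij}+g_{kl}=g_{il}+g_{kj}$, so the identification $A\simeq R[M]$ is the toric description extending (\cite{Swan-MR1144038}, Lemma 12.11) past the $2\times2$ case. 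I would then record the two numerical inputs I need: the single relation $|a|=|b|$ among $m+n$ coordinates forces $\mathrm{rank}(M)=m+n-1$, whence $\dim(R[M])=d+m+n-1$; and $M=\mathrm{gp}(M)\cap\mathbb{Z}_{+}^{m+n}$ is a saturated affine monoid, hence normal and in particular seminormal. This seminormality is exactly what lets Theorem \ref{mt2} apply once the $\mathfrak{M}$-membership is secured.

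The heart of the argument is the combinatorial claim $M\in\mathfrak{M}_{k(m,n)}$. Assume without loss that $m\le n$, so that $k(m,n)=\big[\tfrac{m+n-1}{m}\big]=\lceil n/m\rceil$. The idea is to partition the $n$ columns into $\lceil n/m\rceil$ blocks $B_1,\dots,B_k$, each of size at most $m=\min\{m,n\}$, and to let each block furnish exactly one step of the chain (filtration by positive gradings together with the monic inversions) that the definition of $\mathfrak{M}_k$ requires. For a fixed block $B_\ell$, the associated $m\times|B_\ell|$ sub-configuration of generators spans a sub-Segre piece in which inverting a suitable variable exposes a Laurent-polynomial direction---after inverting one $y_{i_0 j_0}$ the relations $y_{ij}y_{kl}=y_{il}y_{kj}$ express the remaining variables through a single row and column---and this is the one global peelable direction attached to $B_\ell$. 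I would then assemble the $k$ directions coming from $B_1,\dots,B_k$ into the defining chain of $\mathfrak{M}_k$, checking at each stage that the intermediate monoid stays seminormal and that the monic-inversion hypothesis is met. The count is governed by the fact that a single step can absorb at most $m=\min\{m,n\}$ columns, so $\lceil n/m\rceil$ blocks are needed to exhaust all $n$.

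Granting $M\in\mathfrak{M}_{k(m,n)}$ seminormal of rank $r=m+n-1$, with $1\le k(m,n)\le r$, Theorem \ref{mt2}(2) yields at once
\[
S\text{-}\dim(A)\ \le\ \dim(A)-k(m,n)\ =\ d+m+n-1-k(m,n),
\]
which is the asserted bound. For the final sharpening I would examine the building-block submonoids $N\in PS(M)$: when such an $N$ is seminormal, the bipartite constraint decouples enough that one may instead use the decomposition of the edge set of $K_{m,n}$ into $\max\{m,n\}$ matchings (K\"onig edge-colouring), each matching---a set of variables in distinct rows and columns---supporting one peelable direction. This upgrades the count from $\lceil\max/\min\rceil$ to $\max\{m,n\}$, so $N\in\mathfrak{M}_{\max\{m,n\}}$, and $\mathfrak{M}_{\max\{m,n\}}\subset\mathfrak{M}_{k(m,n)}$ by the descending-chain property, since $\max\{m,n\}\ge\lceil\max\{m,n\}/\min\{m,n\}\rceil=k(m,n)$.

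I expect the genuine obstacle to be the heart, namely turning the block decomposition into an actual witness for $\mathfrak{M}_{k(m,n)}$. The numerics $\lceil n/m\rceil$ are immediate, but one must verify that each peeling step is compatible with the seminormality and monic-inversion conditions built into the class, that the intermediate monoids remain in the class so the $k$ steps compose, and crucially that a single step cannot be enlarged to swallow more than $\min\{m,n\}$ columns---which is what pins the index to $\lceil n/m\rceil$ rather than something smaller. The refinement for $N\in PS(M)$ is similar in spirit but additionally requires the matching decomposition to be realized at the level of monoids, not merely on the index grid.
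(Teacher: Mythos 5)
Your setup is sound as far as it goes: the toric identification of $S_{mn}(R)$ as $R[M]$, the rank count $m+n-1$, normality (hence seminormality), the reduction of the whole theorem to the membership claim plus Theorem \ref{mt2}, the identity $k(m,n)=\big[\frac{m+n-1}{m}\big]=\lceil n/m\rceil$ for $m\le n$, and the closing containment $\mathfrak{M}_{\max\{m,n\}}\subset\mathfrak{M}_{k(m,n)}$ all match the paper. But there is a genuine gap at exactly the point you flag yourself: the membership $M\in\mathfrak{M}_{k(m,n)}$ is never actually verified, and the mechanism you sketch for it is the wrong one. In this paper $\mathfrak{M}_k$ is \emph{not} defined by a chain of ``peelable directions'' obtained by inverting variables; monic inversion is a separate application (Section 5.3). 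Membership requires: (i) for \emph{every} generator $U_i\in gen(M)_1$ (those involving $t_1$) an algebraically independent $k$-element subset $S_i$ of the fixed generating set; (ii) a common automorphism $\widetilde{\eta}\in Aut_{R[t_1]}(R[t_1,\ldots,t_r])$ restricting to $R[M]$ which turns quasi-monics of $R[S_i]$ into polynomials monic in $U_i$ with coefficients in $A_0^1=R[M_0^1]$; and (iii) $\widehat{M_1}\in\mathfrak{M}_{k-1}$. Inverting a $y_{i_0j_0}$ to expose a Laurent direction produces no such automorphism and engages none of these conditions. Note also that the class is defined for monoids embedded in $\mathbb{Z}_{+}^{r}$ with $r=rank(M)$, so your Segre monoid sitting in $\mathbb{Z}_{+}^{m}\oplus\mathbb{Z}_{+}^{n}$ (rank $m+n-1$ in ambient rank $m+n$) must first be re-embedded. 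The paper does all of this explicitly: it identifies $M'$ with $M=\mathbb{Z}_{+}[x_1,\ldots,x_n,\,x_ix_j\mid 1\le i\le n,\ n+1\le j\le m+n-1]$ via $y_{1j}\mapsto x_j$, $y_{ij}\mapsto x_jx_{n+i-1}$; reads off $gen(M)_1=\{x_1,x_1x_{n+1},\ldots,x_1x_{n+m-1}\}$; chooses $S_1=\{x_1,\ldots,x_k\}$ and $S_j=\{x_{n+j-1}x_1,x_{n+j-1}x_{jk-(k-1)},\ldots,x_{n+j-1}x_{jk-1}\}$ (essentially your column blocks, but attached one block per generator $U_j$); writes down $\widetilde{\eta}(x_i)=x_i+x_1^{d_j}x_{n+j-1}^{d_j-1}$ (respectively $x_i+x_1^{d_1}$) with $d_j$ large, checks it restricts to $R[M]$ with coefficients landing in $R[M_0^1]$; and closes the recursion by $\widehat{M_1}\simeq$ the monoid of $S_{m(n-1)}(R)$ together with $k-1\le k(m,n-1)\le k$. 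None of these steps has a counterpart in your sketch, and they are the entire content of the claim.

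The refinement for $N\in PS(M)$ has the same problem, and here your proposed route (K\"onig edge-colouring of $K_{m,n}$ into $\max\{m,n\}$ matchings) points away from the actual argument. The index improves to $\max\{m,n\}=n$ not because of a matching count but because any $N\in PS(M)$ is a $x_1$-tilted $\phi$-simplicial monoid, so condition (2) relaxes to condition $(2')$: one only needs a monic in $x_1$, supplied by a single substitution $x_i\mapsto x_i+x_1^p$ ($1<i\le n$) as in Lemma \ref{l3}, which restricts to $R[N]$ precisely because of tiltedness. The recursion depth is then $n$, since $\widehat{N_1}\in PS(\widehat{M_1})$ is seminormal and $R[\widehat{M_1}]\simeq S_{m(n-1)}(R)$, giving $N\in\mathfrak{M}_n$ by induction on $n$. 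A matching decomposition of the index grid offers no evident way to produce, level by level, the automorphism and the hat-recursion data the definition demands, so even granting the combinatorics, that part of your plan would not assemble into a witness for $\mathfrak{M}_{\max\{m,n\}}$.
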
 
	
	The central theme in the theorems above is to check the invariance of the bound of Serre dimension under monoid extensions of the ring $R$. We next attempt to improve upon this result for a certain class of rings $R$. In (\cite{BLR-MR796196}$,$ Theorem 5.2)$,$ for a normal $d$-dimensional ring $R$ and a projective $R[T]$-module of $rank \geq d,$ the authors proved the map $Um(P) \rightarrow Um(P/TP)$ to be surjective$,$ if $Um(P_f) \neq \emptyset$ for some $f \in R[T]$ monic in $T$. Taking cue from them$,$ we prove the following (see Theorem \ref{mmt4}):

	\begin{theorem}\label{mt4}
		Let R be a normal ring of dimension $d,$ $M \in \mathfrak{M}_n$ a normal $\phi$-simplicial monoid of rank $r > 0$ and $A=R[M]$. Let $P$ be a projective $A$-module of rank $ dim(A) - n$ and $J=J(R,P)$ be the Quillen ideal of $P$. Assume
		\begin{enumerate}
			\item  $Um(P_f) \neq \emptyset$ for some $f \in R[M]$ monic in $t_1$;
			\item When $n>1,$ $M \in \mathfrak{M}_n$ is such that the automorphism $\widetilde{\eta}$ obtained has the form $\widetilde{\eta}(t_i) \in  t_i + M_1$ for $i > 1$.
		\end{enumerate}
		Then the map $Um(P) \rightarrow Um(P/A_+^1P)$ is surjective.
	\end{theorem}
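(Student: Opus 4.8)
The plan is to adapt the monic-inversion technique of (\cite{BLR-MR796196}, Theorem 5.2) from the polynomial ring $R[T]$ to the monoid algebra $A = R[M]$, with the distinguished generator $t_1$ playing the role of $T$. Theorem \ref{mt2} already gives the surjectivity of $Um(P) \to Um(P/A_+^1 P)$ when $\mathrm{rank}(P) > \dim(A) - n$; the content here is to reach the boundary value $\mathrm{rank}(P) = \dim(A) - n$, and it is precisely the monic-inversion hypothesis (1) that should buy this extra unit, exactly as over $R[T]$ the monic hypothesis of \cite{BLR-MR796196} descends the threshold from $\mathrm{rank} > d$ to $\mathrm{rank} = d$. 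Since the reduction $A \to A/A_+^1 \cong R$ identifies $P/A_+^1 P$ with the fibre $P \otimes_A R$, lifting a given $\bar u \in Um(P/A_+^1 P)$ means producing a unimodular element of $P$ that reduces to $\bar u$ modulo the augmentation ideal.

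First I would settle the base case $n = 1$. Using that $M$ is normal and $\phi$-simplicial, I realize $A$ as a module-finite extension of a polynomial subring in which $t_1$ is a genuine variable, so that ``$f \in R[M]$ monic in $t_1$'' carries its literal polynomial meaning and $A/(f)$ is finite over the base. I then bring in the Quillen ideal $J = J(R,P)$ and argue locally: over the locus where $J$ is a unit, $P$ is extended from $R$, so Quillen's local--global principle yields a unimodular element lifting $\bar u$; over the complementary closed locus the monic $f$ becomes invertible and hypothesis (1) supplies $Um(P_f) \neq \emptyset$. Monicity of $f$ in $t_1$ --- hence finiteness of $A/(f)$ --- is what allows these two local unimodular elements to be glued into a global one, following Lindel's semilinear-map formalism adapted to the graded structure of $A$.

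For $n > 1$ I would induct on $n$ along the descending chain $\mathfrak{M}_n \subset \mathfrak{M}_{n-1}$, using hypothesis (2). The automorphism $\widetilde{\eta}$ with $\widetilde{\eta}(t_i) \in t_i + M_1$ for $i > 1$ acts as a shearing that normalizes the position of a chosen lift relative to the monoid filtration; after applying $\widetilde{\eta}$ and regarding $M$ as a member of $\mathfrak{M}_{n-1}$, the task should reduce to a lifting problem covered by the inductive hypothesis, and ultimately by the base case, while the monic-inversion input of (1) is transported along $\widetilde{\eta}$ so that it persists at each stage. One must check that $\widetilde{\eta}$ is compatible with normality, $\phi$-simpliciality and the augmentation, so that every descent step remains within the hypotheses of the theorem.

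I expect the main obstacle to be the base case: faithfully transporting the Quillen-ideal patching, originally stated over $R[T]$, to the monoid algebra $A = R[M]$. The delicate points are verifying that $A$ is finite over a suitable subring in which $t_1$ behaves monically, that inverting the monic $f$ is compatible with reduction modulo $A_+^1$, and that the gluing of the two local unimodular elements respects the grading. Once these compatibilities are secured the inductive step should be essentially formal, the only real checks being that $\widetilde{\eta}$ preserves the standing hypotheses.
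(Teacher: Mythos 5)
There is a genuine gap, and it is in both halves of your plan. In the base case your gluing mechanism misreads how the two hypotheses interact: on the closed locus $V(J)$ the monic $f$ does \emph{not} become invertible (a monic in $t_1$ is certainly not a unit modulo $J$), and hypothesis (1) plays no role there. What actually handles $V(J)$ is normality of $R$: localizing at a height-one minimal prime of $J$ gives a PID, over which $P$ is free, forcing $ht(J)\geq 2$; hence $\dim(R/J)\leq d-2$, so the rank $d+r-n$ of $P$ \emph{strictly} exceeds $\dim((R/J)[M])-n$ and Theorem \ref{t6} (the strict-inequality result, not the boundary case) lifts the given class over $(R/J)[M]$; a patching square over $A/JA_+^1$ then yields $p'\in Um(P/JA_+^1P)$. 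Hypothesis (1) enters entirely elsewhere: one picks $q\in P$ with $f\in O_P(q)$, applies Eisenbud--Evans (via a transvection, lifted using Bhatwadekar--Roy) to make the order ideal $O_Q(q')$ of the complement of $p_1$ have height $\geq d-1+r-n$, extracts from it a monic $g$ in $t_1$, and sets $p_0=p+t_1^Ngq$. Monicity in $t_1$ together with Gubeladze's integrality lemma for normal $\phi$-simplicial monoids makes $R[M]/(h)$ integral over $R[M_0^1]$, whence $(p_0)_{1+A_+^1}$ and $(p_0)_{1+J(R[M_0^1],P)}$ are unimodular, and the conclusion comes from the Bhatwadekar--Lindel--Rao criterion (Theorem \ref{tp1}), not from a Lindel-style semilinear gluing of local sections. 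You also omit the case $d=1$, where the paper argues via Swan's theorem plus the graded Quillen local--global principle that $P$ is extended from $R[M_0^1]$.

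Your induction on $n$ along $\mathfrak{M}_n\subset\mathfrak{M}_{n-1}$ cannot get off the ground for a numerical reason: the theorem for $\mathfrak{M}_{n-1}$ requires $rank(P)=\dim(A)-(n-1)$, one more than your module has, so viewing $M$ as a member of $\mathfrak{M}_{n-1}$ puts $P$ \emph{below} the rank threshold of the inductive hypothesis rather than inside it. In the paper there is no induction on $n$ at all: $n$ intervenes only inside the monic-production step, where $\dim(R/J)\leq d-2$ lets Lemma \ref{l1} produce a quasi-monic in $(R/J)[t_1^{p_1},\ldots,t_n^{p_n}]$ lying in the relevant intersection of minimal primes, and the $\mathfrak{M}_n$-automorphism converts it into a monic in $t_1$ with coefficients in $(R/J)[M_0^1]$. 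Hypothesis (2), which you cast as a ``shearing'' enabling descent, actually serves a different and essential purpose: since $\widetilde{\eta}$ is constructed over $R/J$, the condition $\widetilde{\eta}(t_i)\in t_i+M_1$ is what allows it to be lifted to an automorphism of $R[M]$ that fixes $t_1$ and preserves the monicity of $f$ in $t_1$, so the monic-inversion input survives the change of coordinates. Without that lifting step the automorphism over $R/J$ would be useless for the global construction of $p_0$.
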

	
	As a corollary to the above$,$ we show that if $R$ is a normal ring$,$ $B$ a birational overring of $R[X]$ and $M$ a positive normal rank 2 monoid$,$ then $S$-$dim(B[M]) \leq dim(R)$ (Corollary \ref{e1}).
	
	
	\section{Preliminaries}
	
	\textbf{Throughout$,$ all rings are commutative noetherian with unity and projective modules are finitely generated of constant rank. 
		All monoids considered are commutative$,$ cancellative and torsion-free.}
	
	For a projective $R$-module $P$ and $p \in P,$ we denote the \textit{order ideal} of $p$ as $\mathcal{O}_P(p) = \{ \phi(p)~|~\phi \in Hom_R(P,R)\}$. If $\mathcal{O}_P(p) = R,$ then $p$ is called a \textit{unimodular element} of $P$. The set of such elements is denoted by $Um(P)$ and is our main object of study in this paper. The existence of a unimodular element $p \in P$ decomposes it to $P \simeq Rp \oplus Q,$ where $Q$ is a projective $R$-module. This is often referred to as a splitting of $P$.  
	
	The monoid algebra $R[M]$ is generated as a free $R$-module with basis as elements of the monoid $M$ and coefficients in $R$. For a finite set $T$ we denote by $\pz{}[T],$ the monoid generated by finite multiplicative $\pz{}$-combinations of elements in $T$. A monoid $M$ is defined to be 
	\begin{itemize}
		\item \textit{affine}$,$ if it is finitely generated$,$ i.e.$,$ there exists $m_1, \ldots, m_k \in M$ such that $M = \pz{}[m_1, \ldots , m_k]$;
		\item \textit{positive}$,$ if its group of units $U(M)$ is trivial;
		\item \textit{cancellative}$,$ if $zx=zy$ implies $x=y$ for $x, y, z \in M$;
		\item \textit{torsion-free}$,$ if for $x,y\in M$ and $n > 0,$ $x^n = y^n$ implies $x=y$;
		\item \textit{normal}$,$ if $x \in gp(M)$ and $x^n \in M$ for some $n >0,$ then $x \in M$;
		\item \textit{seminormal}$,$ if $x \in gp(M)$ and $x^2, $ $x^3 \in M,$ then $x \in M$.
	\end{itemize}
	Since $M$ is cancellative$,$ the torsion-freeness of $M$ is equivalent to that of its group completion $gp(M)$. The rank of $M$ is the dimension of the $\mathbb{Q}$-vector space $\mathbb{Q} \otimes gp(M)$.	
	
	Gubeladze in (\cite{Gubeladze2-MR937805}$,$ Corollary 3.2)$,$ proved the following:
	
	\begin{theorem}\label{tp2}
		Let $k$ be a field and $A = k[M]$ be the monoid algebra. Then $M$ is seminormal if and only if projective $A$-modules are free.
	\end{theorem}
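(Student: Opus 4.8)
The plan is to prove the two implications separately, with the forward implication carrying essentially all of the difficulty.

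For the easy direction, suppose every finitely generated projective $k[M]$-module is free, and deduce that $M$ must be seminormal. Observe first that $k[M]$ is a domain: $M$ embeds into the torsion-free abelian group $gp(M)$, so $k[M]\subseteq k[gp(M)]$, which is a Laurent polynomial ring. Freeness of all projectives forces $Pic(k[M])=0$, since a non-principal invertible ideal would be a non-free rank-one projective. I would then use that, for a monoid algebra over a field, a nonzero Picard group is exactly the obstruction coming from non-seminormality: if $M$ were not seminormal, pick $x\in gp(M)\setminus M$ with $x^2,x^3\in M$; restricting to the rank-one cuspidal configuration generated by such an element yields a subalgebra of the form $k[t^2,t^3]$, whose Picard group, computed from the conductor square with conductor $t^2k[t]$, is the additive group $(k,+)\neq 0$. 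Transporting this non-trivial line bundle back gives a non-free projective over $k[M]$, a contradiction; hence $M$ is seminormal.

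For the hard direction ($M$ seminormal $\Rightarrow$ projectives free), I would first reduce to affine monoids. Any finitely generated projective $k[M]$-module is finitely presented, hence defined over $k[M_0]$ for some finitely generated submonoid $M_0\subseteq M$; replacing $M_0$ by its seminormalization (still affine, and contained in $M$ because $M$ is seminormal) and writing $M$ as the filtered colimit of such $M_0$, freeness over $k[M]$ follows from freeness over each $k[M_0]$. So assume $M$ affine and seminormal, and induct on the rank $r$ of $M$.

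The core of the argument, and the place I expect all the difficulty to concentrate, is the inductive step, which is Gubeladze's pyramidal descent. The base case is $M=gp(M)\cong\mathbb{Z}^s$, where $k[M]$ is a Laurent polynomial ring over a field and every projective is free by the Quillen--Suslin theorem for Laurent extensions. In the inductive step I would present the positive part of $M$ through a chain of elementary extensions and transfer freeness down this chain, the key tools being a Quillen-style local--global principle together with a Horrocks-type theorem applied after inverting a suitable element that is monic along one of the monoid variables. This reduces the whole matter to showing that a prescribed family of elementary monoid automorphisms acts nilpotently on the isomorphism classes of projective modules; it is precisely here that seminormality of $M$ is indispensable, and establishing this nilpotence is the technical heart of the proof. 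Once the descent is available, the induction terminates at the trivial monoid, over which $k[M]=k$ and freeness is immediate.
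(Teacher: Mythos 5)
First, a point of context: the paper does not prove Theorem \ref{tp2} at all --- it is quoted as background from Gubeladze (\cite{Gubeladze2-MR937805}, Corollary 3.2), so your attempt can only be measured against Gubeladze's published argument (see also Swan's exposition in \cite{Swan-MR1144038}). Against that standard, your hard direction is a faithful table of contents rather than a proof. The reduction to affine seminormal monoids via filtered unions is fine (it is the same device the paper itself uses at the end of Theorem \ref{mmt1}), and the base case via Quillen--Suslin--Swan for Laurent polynomial rings is fine; but the entire mathematical content --- why the cone of an affine seminormal monoid admits the pyramidal decomposition, how the Quillen patching and Horrocks-type steps are organized along it, and above all why the relevant elementary automorphisms act nilpotently on isomorphism classes of projectives --- is precisely the step you name as ``the technical heart'' and then defer. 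Since that step is the only place seminormality enters, the forward implication is not established by the proposal; it is a correct roadmap of Gubeladze's proof with the proof omitted.

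The converse direction, which you treat as routine, contains a concrete error. Your computation $Pic(k[t^2,t^3]) \cong (k,+)$ from the conductor square is correct, but the final ``transporting this non-trivial line bundle back'' step fails: the only functor available along the inclusion $k[t^2,t^3] \hookrightarrow k[M]$ is extension of scalars, and base change along an inclusion can kill Picard classes --- indeed $k[t^2,t^3] \hookrightarrow k[t]$ already annihilates the whole group. Detecting nontriviality after extension would require a retraction $k[M] \rightarrow k[t^2,t^3]$ splitting the inclusion, and no such retraction exists for a general witness $x \in gp(M)\smallsetminus M$ with $x^2,x^3 \in M$. The standard repair, which is essentially Gubeladze's (and D.~F.~Anderson's) actual argument, is to construct a nonfree rank-one projective inside $k[M]$ directly: one uses the conductor square for the finite birational extension $k[M] \subset k[sn(M)]$ (note $sn(k[M])=sn(k)[sn(M)]$, in the spirit of Theorem \ref{pt1}), observes that the conductor is a monomial ideal, and extracts from the Mayer--Vietoris units sequence a unit of the quotient, built from the witness $x$, that is not a product of liftable units --- yielding $Pic(k[M]) \neq 0$. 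As written, your sketch produces a nonfree projective only over a subring of $k[M]$, not over $k[M]$ itself, so this direction also has a genuine gap.
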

	
	For a submonoid $L \subset M,$ one can define the localization of $M$ at $L,$ as the submonoid of $gp(M)$ given by $L^{-1}M = \{m-l~|~m \in M, l \in L\}$. An affine monoid $M$ of rank $r$ is \textit{$\phi$-simplicial} if there exists an embedding $M \hookrightarrow \pz{r},$ which is integral. Geometrically speaking$,$ it means we can find a hyperplane $\mathcal{H} \subset \mathbb{R}^r,$ such that $H$ intersects $\mathbb{R}_+M$ (the cone generated by $M$) in a simplex. The equivalent definition that we will use during the course of our discussion is that $M \subset \pz{}[t_1, \ldots, t_r]$ and for all $i$ there exists $p_i \in \mathbb{Z}_{>0}$ such that $t_i^{p_i} \in M$. 
	
	As is usually the case if $M \subset \pz{}[t_1, \ldots, t_r]$ is not $\phi$-simplicial$,$ we can define a new class of monoids containing $M,$ denoted by $PS(M).$ We say $N \in PS(M),$ if $M \subset N \subset \pz{}[t_1, \ldots, t_r]$ and if for an $i,$  $t_i^{p_i} \notin M$ for all $p_i \in \mathbb{Z}_{>0},$ then there exists a $s_i \in \mathbb{Z}_{>0}$ such that $t_i^{s_i} \in N$. In simpler terms we can identify elements of $PS(M)$ as a monoid obtained by adjoining elements to $M$ to make it $\phi$-simplicial. 
	
	If $M$ is a positive affine monoid of rank $r,$ then we know that $M$ can be embedded in $\pz{r} ( \simeq \pz{}[t_1, \ldots, t_r])$. For $T \subset \{1, \ldots, r\}$,  define $ \widehat{M}_T = \pz{}[t_i~|~i \notin T] \cap M $. Let $M_1 = M \cap \pz{}\bigg[ \prod\limits_{i=1}^{r} t_i^{p_i} \mid p_1 > 0, p_ i \in \pz{} \bigg]$. Given $1 \leq j \leq r,$ one can assign a positive grading to $A = R[M]$ via $t_j,$ by defining $A = \bigoplus\limits_{i\geq 0} A_i^j = A_0^j \oplus A_+^j$ where
	\begin{enumerate}
		\item  $M_i^j: = \pz{}[t_1, \ldots, \widehat{t_j}, \ldots, t_r ]t_j^i \cap M;$
		\item $A_i^j$ is the $R[M_0^j]$-module generated by $M_i^j;$
		\item $A_+^j = \bigoplus \limits_{i \geq 1}A_i^j$.
	\end{enumerate}
	Observe $A_0^j = R[M_0^j]$ is a monoid algebra where $M_0^j = \widehat{M_j}$ is a positive monoid (and seminormal$,$ if $M$ is so) of rank $r-1$.
	
	Unless specified$,$ $R[M]$ is assumed to have the $\pz{r}$-grading corresponding to the lexicographic order $t_1 < t_2 < \ldots < t_r$. One may note that in this case the zeroth homogeneous component $A_0 = R,$ and the irrelevant ideal $A_+$ is generated by the generators of the monoid. Let $A = R[t_1, \ldots, t_r]$ and $f\in A$. Then corresponding to this order we define $H(f)$ to be the highest degree component of $f$ and by $L(f),$ the coefficient of $H(f)$ in $R$. We say $f \in A$ is a \textit{quasi-monic} if $L(f) \in R^{\times} = U(R)$. 
	
	The following result of Lindel (\cite{Lindel-MR1322406}$,$ Proposition 1.8) is vital to our discussion:
	
	\begin{proposition}\label{pp2}
		Let $A = \bigoplus\limits_{i \geq 0} A_i$ be a positively graded algebra and $P$ be a projective $A$-module. Let $P_s$ be free for some $s \in A_0$. If there exists $p \in P$ such that the extension $A_0/ O_P(p) \cap A_0 \rightarrow A/O_P(p)$ is integral and $O_P(p) + sA_+ = A,$ then there exists $q \in Um(P)$ such that $q \equiv p$ modulo $sA_+P$.
	\end{proposition}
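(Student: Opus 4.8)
The plan is to show that the non-unimodularity of $p$ is confined to the open set $D(s)$ where $P$ is free, and then to remove it by a perturbation lying in $sA_+P$ by means of a grading homotopy together with Quillen-style patching. Write $I = O_P(p)$. First I would unwind the comaximality $I + sA_+ = A$. Since $V(sA_+) = V(s) \cup V(A_+)$, it yields $V(I) \subseteq D(s) \cap D(A_+)$; equivalently, $p$ is already unimodular at every prime containing $s$ and at every prime containing $A_+$, so in particular its image $\bar p$ in $P/A_+P$ is unimodular over $A_0$. Because any admissible correction $\delta \in sA_+P$ satisfies $\delta \equiv 0 \pmod{s}$, replacing $p$ by $q = p + \delta$ cannot destroy unimodularity at the primes of $V(s)$. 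Hence it suffices to produce such a $\delta$ making $q$ unimodular after inverting $s$, where by hypothesis $P_s$ is free of rank $n$.

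For the main step I would exploit the positive grading to build a homotopy. The assignment $a \mapsto aX^{i}$ for $a \in A_i$ defines an $A_0$-algebra homomorphism $\Psi \colon A \to A[X]$ whose specialization at $X = 1$ is the identity and whose specialization at $X = 0$ is the composite $A \twoheadrightarrow A_0 \hookrightarrow A$. Base-changing $P$ along $\Psi$ yields a projective $A[X]$-module $P'$ with $P'|_{X=1} = P$ (carrying $p$) and $P'|_{X=0} = A \otimes_{A_0}(P/A_+P)$ (carrying the unimodular $\bar p$). The element $1 \otimes p \in P'$ is therefore unimodular at $X = 0$. Working over $A_s[X]$, where $P$ is free and the grading survives since $s \in A_0$, I would apply a Quillen--Roitman local--global argument to transport unimodularity from $X = 0$ to $X = 1$; the integral extension $A_0/(I \cap A_0) \to A/I$ is precisely what guarantees that the relevant Quillen ideal is the unit ideal, so that the locally constructed corrections glue. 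Specializing the resulting unimodular element at $X = 1$, and arranging the correction to lie in $sA_+P$ via the reduction of the first paragraph, produces $q \in Um(P)$ with $q \equiv p \pmod{sA_+P}$.

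The hard part will be the patching and bookkeeping: one must ensure that the local modifications over $A_s[X]$ assemble into a single global $\delta$ that genuinely lies in $sA_+P$, and not merely in $A_+P$ after inverting $s$, and that the integrality hypothesis---which plays the role usually filled by an assumption $\operatorname{rank} P > \dim A$---is preserved under the localization at $s$ and under the homotopy. Concretely, the integral extension forces $V(I)$ to be finite over $\operatorname{Spec}\big(A_0/(I \cap A_0)\big)$, so that $\dim(A/I) = \dim\big(A_0/(I \cap A_0)\big)$; checking that this dimension control survives all the reductions, and that it indeed completes the comaximality needed for gluing, is the delicate point on which the whole argument rests.
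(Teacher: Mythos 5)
This proposition is not proved in the paper at all: it is quoted verbatim from Lindel (\cite{Lindel-MR1322406}, Proposition 1.8), so your sketch must be measured against Lindel's argument. Your first paragraph is correct and is indeed the standard opening reduction: $O_P(p)+sA_+=A$ gives both $O_P(p)+A_+=A$ (so $\bar{p}\in Um(P/A_+P)$) and $O_P(p)+sA=A$, and the latter survives any correction $\delta\in sA_+P$, so it suffices to achieve unimodularity after inverting $s$. The graded homotopy $\Psi(a_i)=a_iX^i$ is also the right device in spirit: Lindel's semilinear substitutions $\nu_b(a_i)=b^ia_i$ are precisely the specializations $X=b$ of your $\Psi$. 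The genuine gap is the transport step. A polynomial row $v(X)$ over $A_s[X]$ with $v(0)$ unimodular need \emph{not} have $v(1)$ unimodular; to move unimodularity from $X=0$ to $X=1$ by a Quillen--Roitman local--global argument you must first know that $v(X)$ (equivalently, your element of $P'$, after correction) is unimodular over $A_s[X]$ --- which is essentially the statement being proved, so as written the step is circular. What the hypotheses actually yield after applying $\Psi$ is only $O(v(X))+sXA_s[X]=A_s[X]$, i.e.\ comaximality with $X$, not unimodularity of $v(X)$. Moreover your stated use of integrality --- ``the relevant Quillen ideal is the unit ideal'' --- is unsubstantiated (Quillen ideal of which module over which base?), and the dimension identity $\dim(A/I)=\dim\bigl(A_0/(I\cap A_0)\bigr)$ you extract is never fed into any hypothesis: the proposition carries no rank or dimension assumption for it to interact with.

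What is missing is the actual mechanism by which integrality acts. Since $1\in O_P(p)+sA_+$ makes $s$ invertible in $A/O_P(p)$, and invertibility descends along an integral extension ($b\in B\subset C$ with $b^{-1}\in C$ integral over $B$ forces $b^{-1}\in B$), the hypothesis produces $e\in A_0$ with $1-se\in O_P(p)\cap A_0$; that is, integrality converts comaximality upstairs into comaximality $\bigl(O_P(p)\cap A_0\bigr)+sA_0=A_0$ \emph{downstairs in} $A_0$. One then specializes the homotopy not at $X=1$ but at $X=b$ with $b:=se$: the substitution $\nu_b$ satisfies $\nu_b(A_+)\subseteq sA_+$ (as $b\in sA_0$) and $\nu_b\equiv\mathrm{id}$ modulo $(b-1)A$ with $b-1\in O_P(p)\cap A_0$, and it is lifted to a semilinear endomorphism of $P$ congruent to the identity modulo $sA_+P$ by patching its coordinatewise action on the free module $P_s$ against the identity over $A_{1+sA}$ --- this is where the freeness of $P_s$ genuinely enters, and it is what produces a global correction lying in $sA_+P$ rather than merely in $A_+P_s$. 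None of these three ingredients --- the descent of invertibility along the integral extension, the specialization at $b$ in place of $1$, and the semilinear patching constructing $\delta\in sA_+P$ --- appears in your sketch, and they are the points on which the proof stands, not the local--global transport you invoke.
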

	
	We tweak Lemma 3.1 of \cite{Bhatwadekar-birational-MR978296} and write
	
	\begin{lemma}\label{pl1}
		Let $R \subset S$ be a finite extension of reduced rings. Assume $(R/C)_{red} = (S/C)_{red},$ where $C=Ann_R(S/R)$. Assume $P$ is a projctive $R$-module of $rank\geq 2$. If $Um(P \otimes_R S) \neq \emptyset,$ then $Um(P) \neq \emptyset$.   
	\end{lemma}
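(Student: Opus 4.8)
The plan is to exploit the conductor (Milnor) square attached to the finite extension $R \subset S$ and to patch unimodular elements across it. First I would record that the conductor $C = \mathrm{Ann}_R(S/R)$ is a common ideal of $R$ and $S$, so that we obtain the Cartesian square realizing $R = S \times_{S/C} R/C$, together with the surjections $R \to R/C$, $S \to S/C$ and the inclusion $R/C \hookrightarrow S/C$. By Milnor patching, the projective module $P$ is recovered from a triple $(P_S, P_{R/C}, \theta)$, where $\theta$ is the gluing isomorphism over $S/C$; crucially, a pair $(u,v)$ with $u \in Um(P_S)$ and $v \in Um(P_{R/C})$ whose images agree under $\theta$ over $S/C$ glues to an element of $Um(P)$. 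Thus it suffices to take the given $u \in Um(P_S)$ and manufacture a matching $v$ over $R/C$.

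The next step uses the hypothesis $(R/C)_{\mathrm{red}} = (S/C)_{\mathrm{red}}$. Reducing $u$ gives $\bar u \in Um(P_{S/C})$; pushing this further into the reduced ring and invoking the identification $P \otimes_R (R/C)_{\mathrm{red}} = P \otimes_R (S/C)_{\mathrm{red}}$ exhibits it as a unimodular element over $(R/C)_{\mathrm{red}}$. Since the nilradical of the noetherian ring $R/C$ is nilpotent, hence contained in its Jacobson radical, unimodularity lifts: any lift $v \in P_{R/C}$ of this element satisfies $\mathcal{O}_{P_{R/C}}(v) + \mathrm{nil}(R/C) = R/C$, whence $\mathcal{O}_{P_{R/C}}(v) = R/C$ and $v \in Um(P_{R/C})$. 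By construction the images of $u$ and $v$ in $P_{S/C}$ agree modulo the nilradical of $S/C$.

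It remains to make these images agree exactly, and this is where $\mathrm{rank}\,P \geq 2$ enters. Two unimodular elements of the projective $S/C$-module $P_{S/C}$ that are congruent modulo the nilradical lie in a single orbit of the transvection group $E(P_{S/C})$ relative to that ideal; the rank hypothesis is what guarantees this transitivity (already for free rows it is the fact that one may clear entries and absorb the resulting unit into a $\mathrm{diag}(a,a^{-1}) \in E_2$ factor). Choosing such a $\tau$ carrying $\theta(\bar u)$ to the image of $v$, and noting that $S \to S/C$ is surjective so that the defining datum of $\tau$ lifts (homomorphisms out of the projective $P_S$ lift along the surjection), I obtain a lift $\widetilde{\tau} \in \mathrm{Aut}(P_S)$. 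Replacing $u$ by $\widetilde{\tau}(u) \in Um(P_S)$ adjusts its image over $S/C$ to coincide exactly with that of $v$, and the pair $(\widetilde{\tau}(u), v)$ patches to the desired $w \in Um(P)$.

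I expect the main obstacle to be this final exact-matching step: controlling the elementary action over $S/C$ and checking that the connecting transvection lifts to $S$ compatibly with the gluing $\theta$, rather than merely over the reduced ring. The bookkeeping of whether one modifies $u$ over $S$, modifies $v$ over $R/C$, or instead twists $\theta$ itself — and verifying that such a twist leaves the isomorphism class of $P$ unchanged — is the delicate point. The reducedness of $R$ and $S$ together with the equality $(R/C)_{\mathrm{red}} = (S/C)_{\mathrm{red}}$ are precisely what collapse this obstruction to a nilpotent discrepancy, which the condition $\mathrm{rank}\,P \geq 2$ then clears.
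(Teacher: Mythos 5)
Your conductor-square argument is essentially the paper's own proof: the paper in fact gives no argument for Lemma \ref{pl1}, deferring to (\cite{Bhatwadekar-birational-MR978296}, Lemma 3.1), whose proof is precisely this Milnor patching over $R = S \times_{S/C} R/C$ --- take $u \in Um(P\otimes_R S)$, use $(R/C)_{red}=(S/C)_{red}$ to lift its reduction through the nilpotent kernel to some $v \in Um(P/CP)$, and use $rank(P)\geq 2$ to bring the two images over $S/C$ into exact agreement by transvections that lift to $Aut(P\otimes_R S)$. The two points you flag as delicate are indeed the only real content and both are standard: transitivity of the transvection group of $P\otimes S/C$ on unimodular elements congruent modulo the nilpotent ideal $\mathrm{nil}(S/C)$ does hold in rank $\geq 2$ (not just for free rows --- a trace-ideal/Whitehead-style argument handles a non-free complement), and a naive lift $(\psi,q)$ of a transvection datum only satisfies $\psi(q)\in C$, which one repairs by the usual correction against the unimodular functional coming from $u$ itself, so that the lifted map is a genuine automorphism.
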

	
	Let $sn(R)$ and $sn(M)$ denote the seminormalization of the respective entities. We invoke the following Theorem (\cite{Gubeladze1-MR2508056}$,$ Corollary 4.76):
	
	\begin{theorem}\label{pt1}
		Let $R$ be a reduced ring and M a monoid. Then $sn(R[M]) \simeq sn(R)[sn(R[M])]$.
	\end{theorem}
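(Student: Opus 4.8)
The plan is to prove $sn(R[M]) \simeq sn(R)[sn(M)]$ by means of the universal characterization of seminormalization: for a reduced ring $A$, the ring $sn(A)$ is the unique intermediate ring $A \subseteq B \subseteq \overline{A}$, with $\overline{A}$ the integral closure of $A$ in its total quotient ring, such that $B$ is seminormal and $A \hookrightarrow B$ is subintegral (integral, inducing a homeomorphism of spectra and isomorphisms on residue fields). Setting $B := sn(R)[sn(M)]$, it then suffices to check three things: (a) $B$ is reduced and integral over $R[M]$, so that $B \subseteq \overline{R[M]}$; (b) $R[M] \hookrightarrow B$ is subintegral; and (c) $B$ is seminormal. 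Point (a) is immediate, since $sn(R)$ is reduced and $sn(M)$ is again cancellative and torsion-free, so $B$ embeds into a Laurent polynomial ring over $sn(R)$ and is reduced, while each element of $sn(R)$ is integral over $R$ and each $x \in sn(M)$ satisfies $x^{k}\in M$ for some $k \geq 1$, giving integrality of $R[M] \subseteq B$. By passing to a filtered colimit over the finitely generated submonoids of $M$, one may moreover reduce (b) and (c) to the case where $M$ is affine.

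For (b) I would factor the inclusion as $R[M] \subseteq sn(R)[M] \subseteq sn(R)[sn(M)]$. The extension $R \subseteq sn(R)$ is subintegral by definition, and subintegrality is stable under base change (it consists of the integral universal homeomorphisms inducing isomorphisms on residue fields, and this class is preserved by base change); since $sn(R)[M] = sn(R) \otimes_R R[M]$, the first inclusion is the base change of $R \subseteq sn(R)$ along $R \to R[M]$ and is therefore subintegral. For the second inclusion I would use that $sn(M)$ is obtained from $M$ by a filtered sequence of elementary adjunctions $N \subseteq N[x]$ with $x^{2}, x^{3}$ in the previously constructed monoid; each such step gives an elementary subintegral ring extension $sn(R)[N] \subseteq sn(R)[N][x]$, and since subintegrality is closed under composition and filtered colimits, $sn(R)[M] \subseteq sn(R)[sn(M)]$ is subintegral. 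Composing the two steps yields subintegrality of $R[M] \hookrightarrow B$.

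The principal step, and the one I expect to be the main obstacle, is (c), the seminormality of $B = sn(R)[sn(M)]$. I would use the criterion that a reduced ring $A$ is seminormal exactly when every pair $b, c \in A$ with $b^{3} = c^{2}$ admits a unique $a \in A$ with $a^{2} = b$ and $a^{3} = c$. Grading $B$ by $gp(sn(M))$, one forms the candidate $a = c/b$ in the total quotient ring and must show simultaneously that its $gp(sn(M))$-support lies in $sn(M)$ — controlled by the seminormality of the monoid $sn(M)$ — and that its coefficients lie in $sn(R)$ — controlled by the seminormality of the ring $sn(R)$. The difficulty is that the relation $b^{3} = c^{2}$ does not decouple along the two gradings: cancellation among cross terms entangles the monoid supports with the ring coefficients, so neither hypothesis applies in isolation. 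I would resolve this by a leading-term filtration argument, peeling off homogeneous components with respect to a term order refining the partial order on $gp(M)$ and reducing inductively to two base cases that are already available: the seminormality of Laurent polynomial extensions of the seminormal ring $sn(R)$, and the field-coefficient case in which seminormality of $k[sn(M)]$ follows from seminormality of $sn(M)$ (closely tied to Theorem \ref{tp2}), the reduction to field coefficients being effected by localizing the reduced ring $sn(R)$ at its minimal primes, where it becomes a finite product of seminormal domains. Once (c) is established, the universal property of seminormalization identifies $B$ with $sn(R[M])$, giving $sn(R[M]) \simeq sn(R)[sn(M)]$ as claimed.
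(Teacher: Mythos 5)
The paper does not actually prove this statement: it is quoted verbatim as a known result, namely (\cite{Gubeladze1-MR2508056}, Corollary 4.76) (note the paper's display even contains a typo, $sn(R)[sn(R[M])]$ for $sn(R)[sn(M)]$, which you silently and correctly repaired). So the relevant comparison is with the standard proof in that reference. Your skeleton is the right one: Swan's characterization of the seminormalization as the unique seminormal, subintegral extension does reduce everything to your points (a)--(c), and (a) and (b) are essentially sound. Reducedness via embedding into the group ring of the torsion-free group $gp(sn(M))$, integrality, the identification $sn(R)[M] = sn(R)\otimes_R R[M]$, and the filtration of $sn(M)$ over $M$ by elementary extensions $x$ with $x^2, x^3$ already adjoined are all correct; the only blemish is the blanket claim that subintegrality is stable under arbitrary base change, which needs injectivity to be preserved -- harmless here since $R \to R[M]$ is free.

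The genuine gap is in (c), and it is not a small one: the seminormality of $sn(R)[sn(M)]$ is exactly the content of the cited theorem (in the form: $R[M]$ is seminormal if and only if $R$ and $M$ are, (\cite{Gubeladze1-MR2508056}, Theorem 4.75)), so your argument reduces the statement to its hardest part and then only gestures at it. Worse, the specific reduction you propose fails: localizing a reduced ring at its minimal primes produces fields (not ``a finite product of seminormal domains'' in any useful sense), and every field is seminormal, so this passage destroys precisely the coefficient condition $a_g \in sn(R)$ you must verify -- seminormality of a reduced ring is a condition at all primes and is invisible at minimal ones. (Passing to the quotients $R/\mathfrak{p}$ by minimal primes, rather than localizations, is the right device, but only for the support half: whether a given $g \in gp(M)$ occurs in the candidate $a$ can be tested modulo minimal primes; membership of its coefficient in $sn(R)$ cannot.) Finally, the ``leading-term filtration'' must actually resolve the cross-term entanglement you yourself flag: from $b = a^2$ one only extracts relations of the shape $b_{2g_0} = a_{g_0}^2 + \sum_{g + g' = 2g_0} a_g a_{g'}$, and over a ring with zero divisors the extremal coefficient $a_{g_0}^2$ may vanish, so the induction does not decouple as stated. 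To complete the proof you would have to carry out the two-stage argument in earnest -- first support containment in $sn(M)$ via the domain case (quotients by minimal primes plus a generic total order on $gp(M)$), then a separate induction placing the coefficients in $sn(R)$ -- which is essentially the proof of (\cite{Gubeladze1-MR2508056}, Theorem 4.75); your (a) and (b) are then the easy wrapper around it.
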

	
	Let $R$ be a reduced ring and $P$ a projective $R[M]$-module of $rank  \geq 2.$ Then using the above two results one can show that if $Um(P \otimes sn(R)[sn(M)]) \neq \emptyset,$ then $Um(P) \neq \emptyset$. This is of immense utility in investigating unimodular elements of projective $R[M]$-module when $M$ is not seminormal.
	
	
	\section{The top rank case}
	This section is dedicated to proving the top rank case for monoid algebras $R[M]$. The following lemma is a crucial step towards the proof of Theorem \ref{mt1}.
	\begin{lemma}\label{l1}
		Let R be a ring of dimension d and $A = R[X_1, 
		\ldots, X_m][M] ,$ where $M = \pz{}[W_1, \ldots, W_l]$ is a positive monoid of rank $r \geq s$. Let $I \subset A$ be an ideal of height $> d + r - s$. If 
		$S =  \{ W_{j_1}, \ldots, W_{j_{s}}\} \subset \{W_1, \ldots, W_l \}$ is an algebraically independent subset of $A,$ then $I \cap R[S, X_1, \ldots, X_m]$ contains a quasi-monic.
	\end{lemma}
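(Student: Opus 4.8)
The plan is to realize $B := R[S, X_1, \ldots, X_m]$ as an honest polynomial ring in $s+m$ variables over $R$ and to reduce the statement to a height estimate for the contracted ideal $J := I \cap B$. First I would observe that since $S = \{W_{j_1}, \ldots, W_{j_s}\}$ is algebraically independent over $R$ and the $X_i$ are free indeterminates over $R[M]$, the ring $B$ is a polynomial ring $R[Y_1, \ldots, Y_{s+m}]$ with $dim\,B = d+s+m$, carrying the induced lexicographic grading; the goal then becomes the existence of a quasi-monic in $J$. The containment $B \subseteq A$ holds because $S \subseteq M$, and $A = B[\,W_i : i \notin S\,]$ is a finitely generated $B$-algebra.

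The first substantive step is the height reduction $ht_B(J) \geq d+1$. Since $trdeg_R\,R[M] = rank(M) = r$ while $trdeg_R\,B = s+m$, the morphism $\operatorname{Spec} A \to \operatorname{Spec} B$ has relative dimension $r-s$. Let $\mathfrak q$ be a minimal prime over $J$. Because $B/J \hookrightarrow A/I$ and localization is flat, $(A/I)_{\mathfrak q}$ is a nonzero extension of the Artinian local ring $(B/J)_{\mathfrak q}$, so it carries a prime whose contraction is the maximal ideal; lifting back, there is $P \in \operatorname{Spec} A$ with $P \supseteq I$ and $P \cap B = \mathfrak q$. The dimension formula $ht(P) \le ht(\mathfrak q) + \dim\bigl(A \otimes_B \kappa(\mathfrak q)\bigr) \le ht(\mathfrak q) + (r-s)$, combined with $ht_A(I) \le ht(P)$ and the hypothesis $ht_A(I) > d+r-s$, forces $ht(\mathfrak q) > d$. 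As $\mathfrak q$ was an arbitrary minimal prime over $J$, we get $ht_B(J) \ge d+1$.

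It then remains to show that an ideal $J$ of the polynomial ring $B = R[Y_1, \ldots, Y_{s+m}]$ with $ht_B(J) \geq dim\,R + 1$ contains a quasi-monic. I would first reduce to $R$ reduced, since passing to $R_{red}$ changes neither heights nor the property of a leading coefficient being a unit. I would then induct on the number of variables $p = s+m$: the base case $p=0$ is immediate, as $ht_B(J) > dim\,R$ forces $J = R \ni 1$. For the inductive step I would apply a Nagata substitution $Y_i \mapsto Y_i + Y_p^{e_i}$ ($i<p$) with rapidly increasing exponents, chosen uniformly over the finitely many minimal primes of $R$, and pass to the ideal $J' \subseteq R[Y_1, \ldots, Y_{p-1}]$ generated by the leading $Y_p$-coefficients of elements of $J$; a quasi-monic $g \in J'$ produced by the inductive hypothesis then lifts, after absorbing lower-order terms, to an $f \in J$ with $L(f) = L(g) \in U(R)$, i.e.\ a quasi-monic.

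The main obstacle is the inductive step of this last part: showing that the leading-coefficient ideal $J'$ again has height $\geq dim\,R + 1$. This is precisely where the strict inequality $ht \geq dim\,R + 1$ is indispensable, since a single element of $J$ yields, after the Nagata twist, only a leading coefficient that is a non-zerodivisor, and it is the surplus of height over $dim\,R$ that upgrades this to a unit. I expect the bookkeeping of the Nagata exponents — so that the substitution simultaneously controls the leading behaviour modulo every minimal prime of $R$ while respecting the lex grading that defines ``quasi-monic'' — to be the delicate part, whereas the height reduction of the second paragraph is a formal consequence of the dimension formula.
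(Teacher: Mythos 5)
Your height-reduction step has a genuine gap: the claim that every fiber of $\operatorname{Spec} A \to \operatorname{Spec} B$ has dimension at most $r-s$ is false. The inclusion $B = R[S, X_1,\ldots,X_m] \subset A$ is not flat in general, and the transcendence-degree count $r-s$ controls only the generic fiber; over special primes the fiber dimension can jump. A counterexample already lives inside this paper: take $R=k$ a field, $m=0$, and $M = \pz{}[x_1,\, x_2,\, x_1x_3,\, x_2x_3] \subset \pz{}[x_1,x_2,x_3]$, the rank-$3$ monoid of the Segre extension $S_{22}(k)$, so that $A \simeq k[a,b,c,d]/(ad-bc)$ with $a=x_1$, $b=x_2$, $c=x_1x_3$, $d=x_2x_3$. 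Choose $S=\{a,b\}$, which is an algebraically independent subset of the generators, so $s=2$ and $r-s=1$. The fiber of $A$ over the maximal ideal $(a,b)$ of $B=k[a,b]$ is $A/(a,b)A \simeq k[c,d]$, of dimension $2 > r-s$. Taking for instance $I=(a,b,c)$, which has height $2 > d+r-s = 1$, the contraction is $J=(a,b)$ with minimal prime $\mathfrak q=(a,b)$, and your chain $ht_A(I) \le ht(\mathfrak q) + \dim\bigl(A \otimes_B \kappa(\mathfrak q)\bigr)$ yields only $ht(\mathfrak q) \ge ht_A(I) - 2$, too weak to force $ht(\mathfrak q) > d$. (Your lifting of $\mathfrak q$ to a prime $P \supseteq I$ via the Artinian localization, and the Matsumura-type dimension inequality, are both fine; it is solely the fiber bound $\le r-s$ that fails.)

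The paper avoids exactly this trap by not contracting along $B \subset A$ directly: it presents $A$ as a quotient $\beta: R[Z_1,\ldots,Z_l,X_1,\ldots,X_m] \twoheadrightarrow A$, pulls $I$ back, and contracts along the sub-polynomial inclusion $R[Z_{j_1},\ldots,Z_{j_s},X_1,\ldots,X_m] \subset R[Z_1,\ldots,Z_l,X_1,\ldots,X_m]$, which is flat with all fibers polynomial rings of dimension exactly $l-s$; there the contraction drops height by at most $l-s$, and this loss is compensated by $ht(\beta^{-1}(I)) \ge ht_A(I) + ht(\ker\beta)$ with $ht(\ker\beta) \ge l-r$. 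In other words, the height gained from the kernel upstairs exactly absorbs the fiber jumping you run into downstairs, and since $\beta$ restricts to an isomorphism $R[Z_{j_1},\ldots,Z_{j_s},X] \simeq R[S,X]$ carrying $\beta^{-1}(I) \cap R[Z_{j_1},\ldots,Z_{j_s},X]$ onto $I \cap R[S,X]$, the desired estimate $ht(I \cap R[S,X]) > d$ follows. Your reduction of the final step to the statement that an ideal of height $> \dim R$ in a polynomial ring contains a quasi-monic is the right target --- the paper simply cites Lam, Ch.~III, Lemma~3.2 for it, whereas your Nagata-substitution induction is left as an acknowledged sketch --- but in any case that step cannot start until the contraction estimate is repaired along the paper's lines.
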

	\begin{proof}
		Let $W'=\{Z_1, \ldots, Z_l, X_1, \ldots, X_m \}$ be a set of variables over $R$. Consider the following composition of maps:
		\begin{linenomath*}
			\[R[Z_{j_1}, \ldots, Z_{j_{s}}, X_1, \ldots, X_m ] \xhookrightarrow{  \hspace{0.2cm} i \hspace{0.2cm} } R[W'] \xtwoheadrightarrow{\beta} A,\]
		\end{linenomath*}
		where $\beta\big|_{R} = Id_{R}$ and $\beta(Z_j) = W_j$ for all $1 \leq j \leq l$ and $i$ is the natural inclusion. Then $ht(\beta^{-1}(I)) > (d + r - s) + ht(\ker(\beta)) \geq d + l - s$. This in turn implies $ht(\beta^{-1}(I) \cap R[ Z_{j_1}, \ldots Z_{j_s}, X_{1}, \ldots, X_{m}]) > d$. The result follows from (\cite{Lam-MR2235330}$,$ Ch. III$,$ Lemma 3.2)$,$ which implies the existence of $f \in I \cap R[S][X_1, \ldots, X_m]$ with $L(f) \in R^{\times} = U(R)$. 
	\end{proof}
	
	\begin{proposition}\label{p2}
		Let R be a ring of dimension $d > 0,$ $M \subset \pz{r}$ a positive monoid of rank r and $A=R[M]$. Let $P$ be a projective $A$-module of rank $>d$. Consider the following patching diagram:
		\begin{figure}[H]
			\tikzset{column sep=small, row sep=small, ampersand replacement=\&}
			\centering
			\begin{floatrow}
				\ffigbox{\begin{tikzcd}
						B \arrow[rr,""] \arrow[dd,swap,""] \&\&
						B_1 \arrow[dd,"\pi_1"] \\
						\&  \\
						B_2 \arrow[rr, "\pi_2"] \&\& B' .
				\end{tikzcd}}{}
			\end{floatrow}
		\end{figure} 
		\begin{enumerate}
			\item Let $rank(P) > 2$ and $s \in R$ be such that $B = A, B_1 = A_s, B_2 = A_{1+sR}, B' = A_{s(1+sR)}$ or
			\item Let $s \in R$ be a non-zerodivisor such that $B = A/sA_+^i, B_1 = A/sA, B_2 = A/A_+^i$ and $B' = A/(s, A_+^i)$.
		\end{enumerate}
		If $Um(P \otimes_A B_j) \neq \emptyset$ for $j=1,2,$ then $Um(P \otimes_A B) \neq \emptyset$.
	\end{proposition}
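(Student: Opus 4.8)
The plan is to observe that in both (1) and (2) the displayed square is Cartesian, so that the finitely generated projective module $P\otimes_A B$ is recovered as the fibre product $(P\otimes_A B_1)\times_{P\otimes_A B'}(P\otimes_A B_2)$, and then to glue the two given unimodular elements into one over $B$. First I would check the Cartesian property in each case. In case (1), for $s\in R$ the elements $s^k$ and $1+sa$ are comaximal for every $a\in R$ (since $sa$ is nilpotent modulo $s^k$, so $1+sa$ is a unit there), which is exactly what is needed to see $A=A_s\times_{A_{s(1+sR)}}A_{1+sR}$; as $P$ is finitely generated projective, hence flat, applying $P\otimes_A(-)$ preserves this square. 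In case (2), because $s\in R=A_0$ is a non-zerodivisor that is homogeneous of degree $0$ while $A_+^i$ is a homogeneous ideal, a degreewise check gives $sA\cap A_+^i=sA_+^i$; hence $A/sA_+^i=(A/sA)\times_{A/(s,A_+^i)}(A/A_+^i)$ is Cartesian, this time with both $\pi_1$ and $\pi_2$ surjective.

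With the square Cartesian, a unimodular element of $P\otimes_A B$ is the same datum as a pair of unimodular elements over $B_1$ and $B_2$ whose images in $P\otimes_A B'$ agree, so everything reduces to replacing the given $p_1\in Um(P\otimes_A B_1)$ and $p_2\in Um(P\otimes_A B_2)$ by automorphic images that coincide over $B'$. In case (2) I would combine the surjectivity of $\pi_2$ with a dimension count: $A/A_+^i$ is module-finite over $R$ by a bounded-degree argument, so $dim(A/A_+^i)=d$, and since $s$ is a non-zerodivisor of $R$ it lies in no minimal prime, whence $dim\,B'=dim\,A/(s,A_+^i)\le d-1$. Because $rank(P)>d$ forces $rank(P)\ge dim\,B'+2$, Bass's transitivity theorem makes the elementary transvection group act transitively on $Um(P\otimes_A B')$; I then choose an elementary $\varepsilon'$ with $\varepsilon'(\bar p_1)=\bar p_2$, lift it along the surjection $\pi_2$ to an elementary automorphism $\varepsilon_2$ of $P\otimes_A B_2$, and replace $p_2$ by $\varepsilon_2^{-1}p_2$. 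Now $p_1$ and $\varepsilon_2^{-1}p_2$ agree over $B'$ and patch to the required unimodular element of $P\otimes_A B$.

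Case (1) is more delicate because the structure maps are localizations rather than surjections, so automorphisms over $B'$ cannot be lifted by inspection and $dim\,B'=dim\,A_{s(1+sR)}$ is not small. Here I would first spread out $p_2$: clearing denominators shows $p_2$ already lives in $Um(P_u)$ for a single $u\in 1+sR$, and $u\equiv 1\pmod{s}$ makes $s$ and $u$ comaximal, so $A=A_s\times_{A_{su}}A_u$ is a Zariski patching square. Over the overlap $A_{su}$ both $p_1$ and $p_2$ are unimodular, and the hypothesis $rank(P)>2$ enters through the Quillen--Vaserstein splitting: working within the transvection group to compare $p_1$ and $p_2$ over $A_{su}$, for transvection rank $\ge 3$ the connecting elementary automorphism factors as a product of one automorphism defined over $A_s$ and one defined over $A_u$. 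Distributing these two factors to $p_1$ and $p_2$ respectively aligns them over $A_{su}$, and patching yields $Um(P\otimes_A B)\neq\emptyset$.

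The main obstacle is precisely this alignment in case (1). Unlike case (2), there is no surjection to lift along and no dimension bound to feed Bass's theorem, so the two unimodular elements cannot be matched over $B'$ by a naive transitivity argument; the whole weight of the proof falls on splitting the connecting elementary automorphism over the localization into an $A_s$-part and an $A_u$-part, and it is the commutator calculus for transvections, available only in rank $\ge 3$, that is responsible for the hypothesis $rank(P)>2$. Case (2), by contrast, sidesteps the splitting entirely, trading the localization square for a surjective one at the cost of the estimate $dim\,B'\le d-1$.
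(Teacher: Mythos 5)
There is a genuine gap, and it originates in a misreading of the notation $A_+^i$. In this paper $A_+^i$ is \emph{not} the $i$-th power of the irrelevant ideal $A_+$: as defined in the Preliminaries, the superscript indexes the grading induced by the variable $t_i$, so $A_+^i=\bigoplus_{j\geq 1}A_j^i$ and $B_2=A/A_+^i\simeq R[M_0^i]$ is a monoid algebra over $R$ on the rank $r-1$ monoid $M_0^i$ (the paper's own patching diagram records exactly this isomorphism). Hence your claim that $A/A_+^i$ is module-finite over $R$ by a bounded-degree argument is false; $\dim(A/A_+^i)=d+r-1$ and $\dim B'=\dim (R/s)[M_0^i]$ can be as large as $d+r-2$, not $\leq d-1$. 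This destroys the pivot of your case (2): Bass's transitivity theorem over $B'$ would require $rank(P)\geq \dim B'+2=d+r$, while the hypothesis only gives $rank(P)\geq d+1$, so for $r\geq 2$ there is no elementary automorphism forthcoming from stable-range considerations. The same gap is present, unacknowledged, in your case (1): there $B'=A_{s(1+sR)}\simeq R_{s(1+sR)}[M]$ has dimension up to $(d-1)+r$, and you simply posit a ``connecting elementary automorphism'' over the overlap without any transitivity input; Bass again falls short by $r$.

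What the paper actually uses at this point is its earlier transitivity theorem for monoid algebras (\cite{Maria-MKK1}, Theorem 3.4), in which the rank bound depends on the dimension of the \emph{coefficient} ring (here $\leq d-1$ for $R_{s(1+sR)}$, resp.\ $R/s$) rather than on $\dim B'$: writing $P\otimes_A B'=B'\oplus Q$ via $\pi_1(u)$, the condition $rank(Q)\geq\max\{2,d\}$ yields $\widetilde\sigma\in E(P\otimes B')$ with $\widetilde\sigma(\pi_2(v))=\pi_1(u)$. This is where the hypothesis $rank(P)>2$ is actually consumed (you attribute it instead to the commutator calculus in the splitting step), and applying that theorem forces the preliminary reductions you omit, namely passing to reduced $R$ and seminormal $M$ via Lemma \ref{pl1} and Theorem \ref{pt1}. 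Your surrounding mechanics are sound and do match the paper --- the Cartesian squares, the Quillen decomposition $\widetilde\sigma=\alpha_{1+sR}\circ\beta_s$ with $\alpha\in Aut(P_s)$, $\beta\in Aut(P_{1+sR})$ in case (1), and lifting the elementary automorphism along the surjection $\pi_2$ before patching in case (2) --- but without the monoid-algebra transitivity theorem the alignment over $B'$ is unproved in both cases. You also miss the boundary case $d=1$, $rank(Q)=1$ in case (2), which the paper handles separately via Case 2 of (\cite{MKK&HPS-MR3647151}, Theorem 3.4).
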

	\begin{proof}
		Since $nil(R) \subset nil(A),$ we can assume $R$ is a reduced ring. As $rank(P) \geq 2,$ we may further assume $R[M]$ to be seminormal using Lemma \ref{pl1}. By Theorem \ref{pt1} we have $M$ is seminormal. Let $P$ be a projective $A$-module satisfying the conditions of the hypothesis. Choose $u \in Um(P \otimes_A B_1)$ and $v \in Um(P \otimes_A B_2)$. Consider the decomposition arising from $\pi_1(u) \in Um(P \otimes_A B')$ as $P \otimes_A B' = B' \oplus Q,$ where $Q$ is projective $B'$-module of rank $> d -1$. If $rank(Q) \geq max\{2,d\},$ then by (\cite{Maria-MKK1}$,$ Theorem 3.4) there exists a $\widetilde\sigma \in E(P \otimes B')$ such that $\widetilde\sigma(\pi_2(v)) = \pi_1(u)$. The corresponding diagrams will be
		
		\begin{figure}[H]
			\tikzset{column sep=small, row sep=small, ampersand replacement=\&}
			\centering
			\begin{floatrow}
				\ffigbox{\begin{tikzcd}
						A \arrow[rr,""] \arrow[dd,swap,""] \&\&
						A_s \simeq R_s[M] \arrow[dd,"\pi_1"] \\
						\&  \\
						A_{1+sR} \simeq R_{1+sR}[M]  \arrow[rr, "\pi_2"] \&\& A_{s(1+sR)}
				\end{tikzcd}}{}
				\ffigbox{\begin{tikzcd}
						A/sA_+^{i} \arrow[rr,""] \arrow[dd,swap,""] \&\&
						A/sA \simeq (R/s)[M] \arrow[dd,"\pi_1"] \\
						\&  \\
						A/A_+^{i} \simeq R[M_0^i] \arrow[rr, "\pi_2"] \&\& A/(s, A_+^i).
				\end{tikzcd}}{}
			\end{floatrow}
		\end{figure}
		
		(1) By (\cite{Quillen-MR427303}$,$ Lemma 1 and Theorem 1)$,$ there exists a decomposition $\widetilde\sigma = \alpha_{1+sR}\circ \beta_s,$ where $\alpha \in Aut(P_s)$ and $\beta \in Aut(P_{1+sR})$. As $\beta_s(v_s)  = \alpha_{1+sR}^{-1}(u_{1+sR}) $. Patch $\beta(v)$ and $\alpha^{-1}(u)$ to get $p \in Um(P)$.
		
		(2) Lift $\widetilde\sigma$ to $\sigma \in E(P/A_+^iP),$ and patch $\sigma(v)$ and $u,$ to get $\bar{p} \in Um(P/sA_+^iP)$. If $d = 1$ and $rank(Q) = 1,$ then follow the approach as in Case 2 of (\cite{MKK&HPS-MR3647151}$,$ Theorem 3.4) to get $ Um(P/sA_+^iP) \neq \emptyset$.
	\end{proof}
	
	The following lemma restructures the generators of $M$:
	
	\begin{lemma}\label{l2}
		Let $M \subset \nz{}[t_1, \ldots, t_r]$ be an affine monoid of rank $r$. Then there exists generators $\{W_1, \ldots, W_l \} \subset M$ such that
		\begin{enumerate}
			\item $\{W_1, \ldots, W_{2r'}\} = gen(U(M)),$  where $r' = rank(U(M))$
			\item $W_i = W_{i+r'}^{-1}$ for $1 \leq i \leq r'$
			\item $V = \pz{}[W_{2r'+1}, \ldots, W_l]$ is a positive monoid such that $V \subset M \smallsetminus U(M)$.
		\end{enumerate}
	\end{lemma}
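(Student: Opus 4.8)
The plan is to build the generating set in three stages: first the unit part, then a positive complement, and finally to certify that the complement really is positive. Since $M \subseteq \nz{}[t_1,\ldots,t_r]$ has $gp(M)$ a subgroup of the free abelian group $\nz{}[t_1,\ldots,t_r]\cong \mathbb{Z}^r$, its unit group $U(M)$ is a finitely generated torsion-free abelian group, hence free of rank $r'=rank(U(M))$. I would fix a basis $W_1,\ldots,W_{r'}$ of $U(M)$ and set $W_{r'+i}=W_i^{-1}$ for $1\le i\le r'$; these $2r'$ elements generate $U(M)$ as a monoid, which gives (1) and (2). Next, choosing any finite generating set of the affine monoid $M$ and discarding those generators that happen to be units, I let $W_{2r'+1},\ldots,W_l$ be the remaining (non-unit) generators and put $V=\pz{}[W_{2r'+1},\ldots,W_l]$. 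By construction $M=\pz{}[W_1,\ldots,W_l]=U(M)V$, so everything reduces to verifying (3): that $V$ is positive and that its generators avoid $U(M)$.

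The tool I would use for (3) is projection modulo the lineality space of the cone. Write $C=\mathbb{R}_{\ge 0}M$ for the cone generated by $M$ inside $gp(M)\otimes_{\mathbb{Z}}\mathbb{R}\cong\mathbb{R}^r$, let $L=C\cap(-C)$ be its lineality space, and let $\pi\colon\mathbb{R}^r\to\mathbb{R}^r/L$ be the quotient map. Then $\bar C:=\pi(C)$ is a pointed cone, the image lattice $\Lambda:=\pi(gp(M))$ is torsion-free (because $L$ is a linear subspace, $gp(M)\cap L$ is saturated in $gp(M)$), and $\bar M:=\pi(M)\subseteq\Lambda\cap\bar C$ is a positive affine monoid, since $U(\bar M)\subseteq\bar C\cap(-\bar C)=\{0\}$. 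It is important here to quotient by the full lineality $L$ rather than by $gp(U(M))$: the group $gp(M)/gp(U(M))$ can have torsion, so the clean quotient is the one that kills all of $L$.

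The crux is the identification $M\cap L=U(M)$, which simultaneously forces the non-unit generators off $L$ and makes the positivity argument work. To prove it, given $m\in M\cap L$ I would expand $m$ as a nonnegative integer combination of generators and apply a linear functional $\phi$ that is nonnegative on $C$ and vanishes on $C$ exactly along $L$ (pull back a functional that is strictly positive on the nonzero elements of the pointed cone $\bar C$); since $\phi(m)=0$, only generators lying in $L$ can occur in the expansion. Those generators positively span $L$, so the submonoid they generate has cone equal to the subspace $L$, and a finitely generated submonoid whose cone is a linear subspace is a group; hence $-m$ lies in it and $m\in U(M)$. Consequently every $W_j$ with $j>2r'$ satisfies $\pi(W_j)\ne 0$. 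Finally, if $v\in U(V)$ then $\pi(v)\in U(\bar M)=\{0\}$, so writing $v=\prod_{j>2r'}W_j^{n_j}$ and using that $\bar M$ is positive and $\Lambda$ torsion-free forces every $n_j=0$, i.e. $v=1$; hence $V$ is positive. I expect the positivity of $V$ — ruling out that a product of non-unit generators is a unit — to be the main obstacle, and the two facts that carry it are $M\cap L=U(M)$ and the torsion-freeness of the projected lattice $\Lambda$.
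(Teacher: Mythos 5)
Your proposal is correct, but for the one genuinely nontrivial point, part (3), you take a much heavier route than the paper. The paper's proof is short: take a minimal generating set of $M$; the generators lying in $U(M)$ generate $U(M)$ (even as a monoid, since any expression of a unit as a product of generators can involve only unit generators), so they may be replaced by a $\mathbb{Z}$-basis $W_1,\ldots,W_{r'}$ of the free group $U(M)$ together with the inverses $W_{i+r'}=W_i^{-1}$, while the non-unit generators are retained as $W_{2r'+1},\ldots,W_l$. Your staged construction is the same up to this point, and properties (1), (2) and $M=U(M)V$ come out identically. But (3) then follows from a one-line monoid fact that makes all the convex geometry unnecessary: in a commutative monoid, a divisor of a unit is a unit, i.e.\ if $xy\in U(M)$ with $x,y\in M$, then $(xy)z=1$ gives $x(yz)=1$. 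Hence if $\prod_{j>2r'}W_j^{n_j}\in U(M)$ with some $n_j>0$, each such $W_j$ would be a unit, contradicting minimality; this gives $V\smallsetminus\{1\}\subset M\smallsetminus U(M)$ at once, and $U(V)$ is trivial because a unit of $V$ has its inverse in $V\subset M$ and so lies in $U(M)$. Your alternative — passing to the cone $C=\mathbb{R}_{\ge 0}M$, its lineality space $L$, a functional positive on $C$ off $L$, and the quotient lattice — does work, and it proves along the way the stronger identity $M\cap L=U(M)$, which is of independent interest; but each ingredient (the strictly positive functional on the pointed quotient cone; ``a finitely generated submonoid whose cone is a linear subspace is a group'') hides a Carath\'eodory/rationality verification, e.g.\ passing from a real conic combination to $-km\in M$ for some integer $k\ge 1$ and then writing $m^{-1}=(m^{-k})m^{k-1}$. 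Note, too, that your final step — deducing from $\sum_j n_j\pi(W_j)=0$ in the positive monoid $\bar M$ that each $\pi(W_j)=0$ — is exactly the divisor-of-a-unit fact, applied in $\bar M$ rather than directly in $M$, where it finishes the lemma outright. (You and the paper both tacitly read (3) as $V\smallsetminus\{1\}\subset M\smallsetminus U(M)$, since the identity of $V$ is trivially a unit of $M$; that is clearly the intended meaning.)
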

	\begin{proof}
		Let $W' = \{W'_1, \ldots, W'_{l'}\} \subset \nz{}[t_1, \ldots, t_r] $ be a minimal set of generators of $M$. Then for some $k \leq l',$ $\{W'_1, \ldots, W'_k\}$ acts as a generating set of $U(M)$. As $U(M)$ is a finitely generated torsion-free group$,$ it is free and generated by $r'$ elements such that $r \geq r' = rank(U(M))$.  Let $U(M) \stackrel{\theta}{\simeq} \pz{}[x_1^{{\pm}1}, \ldots, x_{r'}^{\pm 1}]$ and $ \theta^{-1}x_i = W_i,$ where $W_i \in \pz{}[W'_1, \ldots, W'_k]$. Then $ \pz{}[W_1^{\pm{1}}, \ldots, W_{r'}^{\pm 1}] = \pz{}[W'_1, \ldots, W'_k] = U(M)$. Then $W = \{W_1^{\pm{1}}, \ldots, W_{r'}^{\pm 1}, W'_{k+1}, \ldots, W'_l\}$ is a generating set of $M$ with the desired properties.
	\end{proof}	
	
	The advantage of such a decomposition is that it gives a useful way to cleave the units and gives sufficient data about its components. We henceforth refer to $V$ as a \textit{positive component}$,$ $p(M)$ of $M$. A nontrivial consequence of the above lemma would be that if $M$ is not free$,$ then $rank(U(M)) < rank(M)$. Now we are in a position to prove Theorem \ref{mt1}.
	
	\begin{theorem}\label{mmt1}
		Let R be a ring of dimension d and $M$ be a monoid of rank $r \geq 1$. Then $S$-$dim(R[M]) \leq max\{ 1, dim(R[M])-1 \} = max \{1, d + r - 1 \}$.
	\end{theorem}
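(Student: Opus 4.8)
The goal is to bound $S\text{-}dim(R[M]) \leq \max\{1, d+r-1\}$ for an arbitrary monoid $M$ of rank $r$. Concretely, I must show that any projective $R[M]$-module $P$ of rank $\geq \max\{2, d+r\}$ has a unimodular element. The plan is to reduce from an arbitrary monoid to a positive monoid using Lemma \ref{l2}, and then from a positive monoid to the polynomial/Laurent case by applying the patching Proposition \ref{p2} together with Lemma \ref{l1}.

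First I would reduce to the case where $R$ is reduced and $R[M]$ is seminormal: since $nil(R) \subset nil(R[M])$ passes to quotients harmlessly, and since $rank(P) \geq 2$ lets us invoke Lemma \ref{pl1} together with the seminormalization identity of Theorem \ref{pt1}, it suffices to produce a unimodular element after passing to $sn(R)[sn(M)]$. Next, by Lemma \ref{l2} I would write $M = U(M)\,V$ where $V = p(M)$ is a positive component and $U(M) \cong \mathbb{Z}^{r'}$ is free of rank $r' = rank(U(M))$. Setting $B = R[U(M)] = R'[V]$ where $R' = R[U(M)]$ is a Laurent polynomial ring in $r'$ variables, we have $R[M] = R'[V]$ with $V$ positive of rank $r - r'$. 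Because $R'$ is a Laurent extension of $R$, its dimension is $d + r'$, so the relevant codimension count $d + r = \dim R[M]$ is preserved, and it is enough to treat positive monoids after absorbing the units into the base ring.

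The core is therefore the positive case: $A = R'[V]$ with $V \subset \mathbb{Z}_+^{r-r'}$ positive, and $P$ projective of rank $> \dim A - 1 = (d+r) - 1$, i.e. rank $\geq d + r$. Here I would run Proposition \ref{p2}(1): choose a generator $s$ (or a suitable element of $R'$) and patch over the square with vertices $A$, $A_s$, $A_{1+sR}$, $A_{s(1+sR)}$. Localizing at $s$ kills a generator of $V$ and lowers the rank of the monoid, while localizing at $1+sR$ reduces the dimension of the base; in both cases an induction on $r$ (base case being the Laurent/polynomial theorem of Bhatwadekar et al.\ \cite{BLR-MR796196}) supplies unimodular elements $u \in Um(P \otimes A_s)$ and $v \in Um(P \otimes A_{1+sR})$. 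Lemma \ref{l1} guarantees that the order ideal of a candidate element meets a quasi-monic, so that the integrality hypothesis of Proposition \ref{pp2} (via the extension being integral over the zeroth graded piece) is met and the two local elements can be glued. The transition automorphism on $P \otimes A_{s(1+sR)}$ exists by the Serre-dimension estimate of \cite{Maria-MKK1} (Theorem 3.4) once the residual summand $Q$ has rank $\geq \max\{2,d\}$, and Quillen's localization splitting patches it back to a global automorphism.

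The main obstacle I expect is the bookkeeping at the boundary of the induction, namely the low-dimensional and low-rank corner cases where $\max\{1, d+r-1\}$ is governed by the $1$ rather than by $d+r-1$. When $d = 0$ or when the residual projective summand $Q$ drops to rank $1$, Theorem 3.4 of \cite{Maria-MKK1} no longer directly furnishes the transition automorphism, and one must instead argue as in Case 2 of (\cite{MKK&HPS-MR3647151}, Theorem 3.4), exactly as flagged at the end of Proposition \ref{p2}. Managing these degenerate strata, and verifying that the quasi-monic produced by Lemma \ref{l1} indeed yields the integrality needed for Proposition \ref{pp2} after the unit group has been absorbed into the base, is where the real care is required; the rest is a clean double induction on the pair $(d, r)$.
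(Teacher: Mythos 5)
There is a genuine gap, and it sits at the very first structural step. You claim that Lemma \ref{l2} lets you write $R[M] = R'[V]$ with $R' = R[U(M)]$ a Laurent polynomial ring and $V$ a \emph{positive} monoid of rank $r - r'$, thereby ``absorbing the units into the base.'' Lemma \ref{l2} gives only $M = U(M)V$ as a product of submonoids, \emph{not} a direct sum $U(M) \oplus V$: there can be relations $uv = v'$ with $u \in U(M)$ and $v, v' \in V$ distinct (e.g.\ $M \subset \mathbb{Z}^2$ generated by $(1,0)^{\pm 1}, (0,1), (1,1)$), so $R[M]$ is not a free $R'$-module with basis $V$ and is not the monoid algebra $R'[V]$; moreover $rank(V)$ can exceed $r - r'$. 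The splitting $M \simeq U(M) \oplus M'$ you implicitly use holds for \emph{normal} monoids (\cite{Gubeladze1-MR2508056}, Proposition 2.26) — that is exactly the route of Corollary \ref{c4} — but Theorem \ref{mmt1} covers arbitrary (after reduction, seminormal) monoids, where no such splitting exists. The paper's Lemma \ref{l2} is used quite differently: the full generating set $W = \{W_1^{\pm 1}, \ldots, W_{r'}^{\pm 1}, W_{2r'+1}, \ldots\}$ is kept, and one produces a monic in $O_P(p) \cap R[W_j]$ for \emph{every} positive generator (Lemma \ref{l1}) together with \emph{special} monics in each Laurent generator $W_i$ (\cite{Mandal1-MR728138}, Lemma 2.3), so that $R/(O_P(p)\cap R) \hookrightarrow A/O_P(p)$ is integral with $R$ itself, not some intermediate Laurent base, as the bottom ring.

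The second gap is in your patching induction. In Proposition \ref{p2}(1) the element $s$ is a nonzerodivisor of the \emph{base} ring $R$, chosen (via the $d=0$ case applied to $S^{-1}A$) so that $P_s$ is free; hence $A_s = R_s[M]$ and localization at $s$ does not ``kill a generator of $V$'' or lower the monoid rank — there is no induction on $r$ here, and your proposed base case \cite{BLR-MR796196} only covers free monoids, so the inductive step for arbitrary positive $V$ has no mechanism. In the paper, $u \in Um(P_s)$ comes from freeness of $P_s$, while $v \in Um(P_{1+sR})$ is \emph{not} supplied by patching at all: one inducts on $d$ to get $Um(P/sP) \neq \emptyset$ over $(R/s)[M]$, lifts to $p \in P$ with $O_P(p) + sA = A$, raises $ht(O_P(p))$ to $rank(P) \geq d+r$ by Lindel, and then the integrality above forces $(O_P(p) \cap R) + sR = R$, whence $p$ itself is unimodular in $P_{1+sR}$; only then does Proposition \ref{p2} glue. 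Note also that Proposition \ref{pp2}, which you invoke for the gluing, is not used in this theorem (it enters in Theorem \ref{mmt2}), and your sketch omits the non-affine case, handled by writing $M$ as a filtered union of affine seminormal monoids. Your reductions to the reduced/seminormal situation via Lemma \ref{pl1} and Theorem \ref{pt1} do match the paper, but the core engine of the proof is missing.
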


	\begin{proof}
		We may assume $R$ to be a reduced ring. Let $M$ be affine and $A=R[M]$. Assume $P$ to be a projective $A$-module of rank $> max \{1, dim(A)-1 \} = max\{1, d+r-1\}$. As $rank(P) \geq 2,$ we may further assume $R[M]$ to be seminormal using Lemma \ref{pl1}. By Theorem \ref{pt1}, $M$ is seminormal. We will induct on the dimension of the base ring $R$. If $d=0,$ then $R$ is a finite product of fields and by Theorem \ref{tp2}$,$ $P$ is free. If $r=1,$ then as $M$ is seminormal$,$ we have $M = \pz{}$ or $M = \nz{}$ and we are done by \cite{BLR-MR796196}.
		
		Let $d>0$ and $r >1$.  Then $rank(P) > \max\{1, d+r-1\} >2$. Denote by $S$ the set of non-zerodivisors of $R$. Then by the $d=0$ case on $S^{-1}A,$ there exists a non-zerodivisor $s \in R$ such that $P_s$ is free. By induction on $d,$ $Um(P/sP) \neq \emptyset$. Thus there exists a $p \in P$ such that $(p,s) \in Um(P \oplus A),$ the image of $p$ is unimodular in $P/sP$ and $O_P(p) + sA = A$. By (\cite{Lindel-MR1322406}$,$ Corollary 1.3) we may assume $ht(O_P(p)) = rank(P) \geq d + r $.
		
		By Lemma \ref{l2} we may assume existence of $W = \{W_1^{\pm{1}}, \ldots, W_{r'}^{\pm{1}}, W_{2r'+1}, \ldots, W_{2r'+l}\} \subset M \subset \nz{}[t_1, \ldots, t_r]$ such that $M = \pz{}[W]$. Employ the following composition of maps:
		\begin{linenomath*}
			\[R[X_i^{\pm{1}}] \xhookrightarrow{  \hspace{0.2cm} i \hspace{0.2cm} } R[ X_{1}^{\pm{1}}, \ldots, X_{r'}^{\pm{1}}, X_{2r'+1}, \ldots, X_{2r'+l}] \xtwoheadrightarrow{\beta} A,\]
			\[R[X_j] \xhookrightarrow{  \hspace{0.2cm} j \hspace{0.2cm} } R[ X_{1}^{\pm{1}}, \ldots, X_{r'}^{\pm{1}}, X_{2r'+1}, \ldots, X_{2r'+l}] \xtwoheadrightarrow{\beta} A,\]
		\end{linenomath*}
		where $1 \leq i \leq r',$ $2r'+1 < j \leq 2r'+ l,$ $\beta\big|_R = Id_R$  and $\beta(X_k) = W_k$ for all $k$. As height of both the ideals $O_P(p) \cap R[X_i^{\pm{1}}]$ and $O_P(p) \cap R[X_j]$ exceed $d,$ by Lemma \ref{l1} and (\cite{Mandal1-MR728138}$,$ Lemma 2.3)$,$ there exists monics $f_j \in O_P(p) \cap R[W_j]$ and special monics $f_i \in O_P(p) \cap R[W_i],$ with coefficients in $R$. This implies the extension $R/O_P(p) \cap R \hookrightarrow A/O_P(p)$  is integral. As $O_P(p) + sA = A,$ hence $(O_P(p) \cap R) + sR = R$. This in turn means $Um(P_{1+sR}) \neq \emptyset$ and by Proposition $\ref{p2}$ we can conclude the proof.
		
		If $M$ is not affine$,$ then $M$ can be written as a filtered union of affine monoids $M_{i},$ where $i \in \mathcal{I}$. As seminormalization of affine monoids is again affine$,$ we can write $M$ as the filtered union of affine seminormal monoids. Thus a projective $R[M]$-module is extended from a projective $R[M_i]$-module for some $i \in \mathcal{I}$ and the rest follows.
	\end{proof}
	
	If $M$ is a positive seminormal monoid of rank $r,$ then by \cite{BLR-MR796196}$,$ $S$-$dim(R[M \oplus \nz{n}]) \leq dim(R[M]) = d +r$. One way of looking at this would be that the units of monoids play no role in the Serre dimension in the above case. A natural question in a general setup would be whether $S$-$dim(R[M]) \leq d + r - rank(U(M))$? The corollary below gives a partial answer to this query:
	
	\begin{corollary}\label{c4}
		Let $R$ be a ring of dimension $d$ and $M$ be a normal monoid of rank $r$. Then $S$-$dim(R[M]) \leq d + r - rank(U(M))$. In particular$,$ if 
		\begin{enumerate}
			\item $rank(U(M))=r-1,$ then $S$-$dim(R[M]) \leq d$;
			\item $r = 2,$ then $S$-$dim(R[M]) \leq d$.
		\end{enumerate}
	\end{corollary}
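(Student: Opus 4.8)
The plan is to peel the units of $M$ off as Laurent variables and reduce to a positive normal monoid of smaller rank, where the consequence of \cite{BLR-MR796196} recalled just before the corollary applies. First I would reduce to the affine case: writing a normal $M$ as the filtered union of the normalizations $\overline{M_i}$ of an exhausting family of affine submonoids $M_i$, each $\overline{M_i}$ is affine and normal, and $M = \bigcup_i \overline{M_i}$. Since $U(M)$ is finitely generated of rank $r'$ and $U(M) = \bigcup_i U(\overline{M_i})$, for $i$ large one has $rank(\overline{M_i}) = r$ and $rank(U(\overline{M_i})) = r'$; as every projective $R[M]$-module is extended from some $R[\overline{M_i}]$, it suffices to treat affine normal $M$.

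The structural heart is a splitting $M \cong \nz{r'} \oplus M'$ with $M'$ a positive normal monoid of rank $r - r'$. I would argue as follows. Normality forces $gp(M)/U(M)$ to be torsion-free: if $x \in gp(M)$ with $nx \in U(M) \subset M$, then $x \in M$ and $-x \in M$ by normality applied to $x$ and $-x$, so $x \in U(M)$. Hence $gp(M) \cong U(M) \oplus L$ for a complementary lattice $L \cong \nz{r-r'}$. The lineality space of the cone $\mathbb{R}_+ M$ is $U(M) \otimes \mathbb{R}$, so $\mathbb{R}_+M = (U(M)\otimes\mathbb{R}) \oplus C'$ with $C'$ a pointed cone in $L \otimes \mathbb{R}$. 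By normality $M = gp(M) \cap \mathbb{R}_+M$, and intersecting the two decompositions gives $M = U(M) \oplus M'$ with $M' = L \cap C'$ positive, normal and of rank $r - r'$. (Alternatively one can obtain the same splitting from Lemma \ref{l2}, using normality to make the product $M = U(M)V$ direct.) Since normal monoids are seminormal, $M'$ is a positive seminormal monoid.

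Now $R[M] \cong R[M'][Y_1^{\pm 1}, \ldots, Y_{r'}^{\pm 1}] = R[M' \oplus \nz{r'}]$. Applying the cited consequence of \cite{BLR-MR796196} to the positive seminormal monoid $M'$ of rank $r - r'$ yields $S$-$dim(R[M]) \leq dim(R[M']) = d + (r - r') = d + r - rank(U(M))$, which is the main bound. For the refinement (1), $rank(U(M)) = r-1$ forces $M'$ to have rank one, so $M' \cong \pz{}$ and $R[M] \cong R[X_1, Y_1^{\pm 1}, \ldots, Y_{r-1}^{\pm 1}]$; here the full Laurent-polynomial theorem of \cite{BLR-MR796196} gives the sharper $S$-$dim(R[M]) \leq dim(R) = d$, rather than the weaker $d+1$ coming from the generic bound.

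For (2), write $r = 2$ and split on $r' = rank(U(M))$: if $r' = 2$ then $M \cong \nz{2}$ and $R[M]$ is a Laurent polynomial ring over $R$, while $r' = 1$ is exactly case (1) with $r = 2$; in both subcases \cite{BLR-MR796196} gives $S$-$dim(R[M]) \leq d$. The remaining subcase $r' = 0$, a positive normal monoid of rank $2$, does not follow from the generic bound (which only yields $d+2$) and is the main obstacle: here I would invoke the Keshari--Sarwar result \cite{MKK&HPS-MR3647151}, whose class $\mathcal{C}(\phi)$ covers positive rank $2$ normal monoids, to conclude $S$-$dim(R[M]) \leq d$. Collecting the three subcases completes (2). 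The delicate points I would watch are the rank stabilization in the filtered-union reduction and the clean directness of the splitting, both of which hinge on normality; the genuinely external input is the positive rank $2$ case.
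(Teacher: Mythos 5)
Your proposal is correct and follows essentially the same route as the paper: split off the units as $M \simeq U(M) \oplus M'$ (the paper cites \cite{Gubeladze1-MR2508056}, Proposition 2.26, where you prove the splitting directly via normality), apply the consequence of \cite{BLR-MR796196} to the positive seminormal monoid $M'$, handle the positive rank $2$ subcase of (2) by \cite{MKK&HPS-MR3647151}, and reduce the non-affine case to the affine one via a filtered union of affine normal monoids (the paper cites \cite{Gubeladze1-MR2508056}, Proposition 2.22). Your explicit attention to rank stabilization in the filtered-union step is a fine point the paper glosses over, but it does not change the argument.
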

	
	\begin{proof}
		Let $M$ be affine and $rank(U(M)) = r'$. Then by (\cite{Gubeladze1-MR2508056}$,$ Proposition 2.26) we get that $M \simeq U(M) \oplus M'$ and by \cite{BLR-MR796196}$,$  $S$-$dim(R[M]) = S$-$dim(R[M' \oplus U(M)]) \leq d + r - r'$. For (1) observe that $M'$ will be a free positive monoid of rank 1 and use \cite{BLR-MR796196} to get the indicated. If $r=2$ and $M$ is not positive ($r' > 0$) then the assertion follows from (1). When $M$ is positive$,$ invoke (\cite{MKK&HPS-MR3647151}$,$ Corollary 3.6) to prove the required. If $M$ is not affine$,$ then by (\cite{Gubeladze1-MR2508056}$,$ Proposition 2.22) we may write $M = \lim\limits_{\longrightarrow} M_i,$ where $M_i$'s are affine normal monoids. Then a projective $R[M]$-module is extended from a projective $R[M_i]$-module and thus the assertions follow.
	\end{proof}
	
	
	\section{Serre dimension of monoid algebras corresponding to $\mathfrak{M}_n$ - Lower rank case}

	Define $\mathfrak{M}_1$ to be the class of affine positive monoids. Let $M \in \mathfrak{M}_1$ be a submonoid of $\pz{}[t_1, \ldots, t_r]$ of rank $r$. Fix $W = \{W_1, \ldots, W_l\} \subset M$ such that $M = \pz{}[W],$ where $W_i$'s are monomials in $\pz{}[t_1, \ldots, t_r]$. Define $gen(M)_1 = W \cap (M \setminus \widehat{M}_1) = \{U_{1}, \ldots, U_{g_1}\},$ which is a subset of generators of $M$ having some positive power of $t_1$.

	For $n \geq 2,$ define class of monoids $\mathfrak{M}_n \subset \mathfrak{M}_1$ as $M \in \mathfrak{M}_n$ if
	\begin{enumerate}
		\item $n \leq r = rank(M)$;
		\item For each $U_{i} \in gen(M)_1,$ there exists algebraically independent set $S_{i} = \{W_{i_1}, \ldots, W_{i_n}\} \subset W$ such that if $f_i \in R[S_i]$ is quasi-monic$,$ then there exists $\eta \in Aut(R[M]),$ which is the restriction of $\widetilde{\eta} \in Aut_{R[t_1]}(R[t_1, \ldots, t_{r}]),$ such that $\eta(f_i)$ is monic in $U_i$ with coefficients in $A_0^1$ for each $1 \leq i \leq g_1$;
		\item $\widehat{M_{1}} \in {\mathfrak{M}_{n-1}}$.
	\end{enumerate}
	
	For the sake of convenience if $M \in \mathfrak{M}_n,$ we will denote by $(U_j, S_j)_{1 \leq j \leq g_1}$ it's relevant information at the $n$'th level. The second property guarantees the existence of monic polynomials in all of the elements of $gen(M)_1$ in ideals of large enough height. The third ensures a smooth inductive process. These two steps$,$ in combination with a Milnor patching diagram$,$ lead to the required result. One may note that the classes  of monoids $\{\mathfrak{M}_n\}_{n \geq 1}$ defined above, form a descending chain since $\mathfrak{M}_i \subseteq \mathfrak{M}_j$ if $i \geq j$.

	Let $M$ be $\phi$-simplicial and therefore for all $i,$ $t_i^{p_i} \in M$ for some $p_i \geq 1$. Then $t_1^{p_1} \in gen(M)_1,$ irrespective of one's choice for generators for $M$. To decrease the Serre dimension when $M$ is $\phi$-simplicial monoid, we don't need to  find monics in all the elements of $gen(M)_1$ but just $t_1^{p_1} \in gen(M)_1$. Thus if $M$ is $\phi$-simplicial then condition (2) can be relaxed to
	
	\begin{enumerate}
		\item[(2$'$)] If $f \in R[t_1, \ldots, t_n] \cap R[M]$ is a quasi-monic$,$ then there exists an $\widetilde{\eta} \in Aut_{R[t_1]}(R[t_1, \ldots, t_r]),$ such that its restriction  $\eta \in Aut(R[M])$ and $\eta(f)$ is monic in $t_1$ with coefficients in $A_0^1$.
	\end{enumerate}
	
	\begin{remark}\label{r1}
		If $M$ is a $\phi$-simplicial normal monoid$,$ then by (\cite{Gubeladze-umodrow1-MR1161570}$,$ Lemma 6.3) we can relax the definition of $\mathfrak{M}_n$ and demand that under the automorphism we get a monic with coefficients in $R[M]$ instead of $R[M_0^1]$.
	\end{remark}
	
	If $M$ is $\phi$-simplicial, then $M \in \mathfrak{M}_n$ means $M$ satisfies conditions $(1),$ $(2')$ and $(3)$. The index chosen in the above definitions hold no relevance as we can permute the variables. For projective modules $P$ of rank $d < rank P < d+r,$ we will now prove Theorem \ref{mt2}:
	
	\begin{theorem}\label{mmt2}
		Let R be a ring of dimension d and $A=R[M],$ where $M \in \mathfrak{M}_n$ is a seminormal monoid of rank $r \geq 1$.  Assume $P$ to be a projective $A$-module of $rank > dim(A) - n = d +r-n$. Then 
		\begin{enumerate}
			\item the map $Um(P) \rightarrow Um(P/A_+^1P)$ is surjective and
			\item $S$-$dim(A) \leq dim(A) - n = d + r - n$.
		\end{enumerate}
		In particular$,$ if $M \in \mathfrak{M}_r,$ then $S$-$dim(A) \leq d $.
	\end{theorem}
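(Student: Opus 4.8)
The plan is to prove (1) by induction on $n$ and to deduce (2) from (1) by descending to the degree-zero subalgebra of the $t_1$-grading. First I would make the standard reductions: since $nil(R) \subseteq nil(A)$ we may take $R$ reduced, and since $rank(P) \geq 2$ we may apply Lemma \ref{pl1} together with Theorem \ref{pt1} to assume $R[M]$, hence $M$, seminormal; a non-affine $M$ is dispatched at the end as a filtered union of affine seminormal monoids, as in Theorem \ref{mmt1}. The entire argument lives in the $t_1$-grading $A = A_0^1 \oplus A_+^1$, whose zeroth component $A_0^1 = R[\widehat{M_1}]$ is the monoid algebra of $\widehat{M_1} \in \mathfrak{M}_{n-1}$, a seminormal positive monoid of rank $r-1$ with $\dim(A_0^1) = d + r - 1$, and whose quotient $A/A_+^1 \simeq A_0^1$ identifies $P/A_+^1 P$ with the $A_0^1$-module $P \otimes_A A_0^1$.

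For part (1), I would take $\bar p \in Um(P/A_+^1 P)$ and lift it to $p \in P$, so that $O_P(p) + A_+^1 = A$; invoking (\cite{Lindel-MR1322406}, Corollary 1.3) I would then adjust $p$ so that $ht(O_P(p)) = rank(P) > d + r - n$. For each generator $U_i \in gen(M)_1$ the set $S_i \subset W$ furnished by the definition of $\mathfrak{M}_n$ is algebraically independent of size $n$, so the height bound lets Lemma \ref{l1} (taken with no extra polynomial variables and independent set of size $n$) produce a quasi-monic $f_i \in O_P(p) \cap R[S_i]$. Condition (2) in the definition of $\mathfrak{M}_n$ then yields $\widetilde{\eta} \in Aut_{R[t_1]}(R[t_1,\ldots,t_r])$ whose restriction $\eta \in Aut(R[M])$ makes $\eta(f_i)$ monic in $U_i$ with coefficients in $A_0^1$ for every $i$. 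The crucial structural point is that $\widetilde{\eta}$ fixes $t_1$ and moves the remaining variables only by elements of positive $t_1$-degree, so that $\eta \equiv \mathrm{id} \pmod{A_+^1}$; passing to the twisted module therefore leaves both the reduction modulo $A_+^1 P$ and the subalgebra $A_0^1$ untouched, while the monics $\eta(f_i)$ exhibit every monoid generator outside $\widehat{M_1}$ as integral over $A_0^1$ modulo the order ideal of $\eta p$. This is precisely the integrality of $A_0^1/(O(\eta p)\cap A_0^1) \to A/O(\eta p)$ required by Lindel's Proposition \ref{pp2}.

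To finish (1) I would choose $s \in A_0^1$ with $P_s$ free and verify $O(\eta p) + s A_+^1 = A$. Since $O(\eta p) + A_+^1 = A$ already makes the image of $A_+^1$ the unit ideal modulo $O(\eta p)$, this reduces to arranging that $s$ be a unit modulo $O(\eta p)$; by integrality an element of $A_0^1$ is a unit modulo $O(\eta p)$ exactly when it is a unit modulo $O(\eta p)\cap A_0^1$, so the requirement is purely a coprimality condition over $A_0^1$. Proposition \ref{pp2} then returns $q \in Um({}^{\eta}P)$ with $q \equiv \eta p \pmod{s A_+^1 \, {}^{\eta}P}$; transporting back by $\eta^{-1}$ and using $\eta \equiv \mathrm{id} \pmod{A_+^1}$ produces a unimodular element of $P$ reducing to $\bar p$, which is the asserted surjectivity.

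For part (2), $P/A_+^1 P$ is projective over $A_0^1 = R[\widehat{M_1}]$ of rank $> d + r - n = \dim(A_0^1) - (n-1)$, with $\widehat{M_1} \in \mathfrak{M}_{n-1}$ seminormal of rank $r-1$; the inductive hypothesis gives $S\text{-}\dim(A_0^1) \leq \dim(A_0^1) - (n-1) = d + r - n$, so $Um(P/A_+^1 P) \neq \emptyset$, and part (1) lifts this to $Um(P) \neq \emptyset$, yielding $S\text{-}\dim(A) \leq d + r - n$. The base case $n = 1$ is the top-rank situation $rank(P) > d + r - 1$, handled by Theorem \ref{mmt1}, and the assertion for $M \in \mathfrak{M}_r$ is the case $n = r$. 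The step I expect to be the main obstacle is the simultaneous integrality-plus-patching: securing a single grading-preserving automorphism $\eta$ that renders all $U_i \in gen(M)_1$ integral at once, and then selecting $s \in A_0^1$ that is at the same time a splitting element for $P$ and coprime to $O(\eta p) \cap A_0^1$, so that all three hypotheses of Proposition \ref{pp2} are met together.
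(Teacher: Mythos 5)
Your skeleton for part (1) --- lift $\bar p$, raise $ht(O_P(p))$, extract quasi-monics $f_j \in O_P(p) \cap R[S_j]$ via Lemma \ref{l1}, apply the automorphism furnished by the definition of $\mathfrak{M}_n$ to get monics in the $U_j$ with coefficients in $A_0^1$, conclude integrality of $A_0^1/(O_P(p)\cap A_0^1) \to A/O_P(p)$, and finish with Proposition \ref{pp2} --- is the paper's skeleton, and your reduction of (2) to (1) by induction on $n$ over $A_0^1 = R[\widehat{M_1}] $ with $\widehat{M_1} \in \mathfrak{M}_{n-1}$ is also the paper's. But the obstacle you flag at the end is a genuine gap, and it is exactly the point where the paper's proof has machinery you are missing: you never produce an $s$ with $P_s$ free \emph{and} $O_P(p) + sA_+^1 = A$. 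Your observation that, granted integrality, this ``reduces to a coprimality condition over $A_0^1$'' is a correct reformulation but not a construction: $p$ is already pinned down modulo $A_+^1P$ by the surjectivity requirement, the freeness locus of $P$ and the ideal $O_P(p) \cap A_0^1$ are unrelated, and nothing forces a common $s$ to exist. The paper resolves this with a second induction --- on $d$, which your proposal lacks entirely --- combined with the Milnor patching of Proposition \ref{p2}(2). Concretely: localize at the non-zerodivisors of $R$ to find $s \in R$ with $P_s$ free (the $d=0$ case), apply the theorem inductively over $R/sR$ (dimension $\leq d-1$) to get $Um(P/sP) \neq \emptyset$, and then patch that element against $p_1 \in Um(P/A_+^1P)$ over the square with corners $A/sA_+^1$, $A/sA$, $A/A_+^1$, $A/(s,A_+^1)$ to obtain $\bar p \in Um(P/sA_+^1P)$ with $p \equiv p_1$ modulo $A_+^1P$. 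The comaximality $O_P(p) + sA_+^1 = A$ is thus engineered from the outset, not arranged after the fact; this is what makes Proposition \ref{pp2} applicable.

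Two secondary points. First, your height-raising step ``adjust $p$ so that $ht(O_P(p)) = rank(P)$'' must be performed by a perturbation lying in $sA_+^1P$, or the class of $p$ modulo $A_+^1P$ (and the comaximality just obtained) is destroyed; the paper does this by choosing $a \in A_+^1$ with $1 + sa \in O_P(p)$ and replacing $p$ by $p + saq$ via (\cite{Lindel-MR1322406}, Lemma 1.2 and Corollary 1.3). Second, your ``crucial structural point'' that $\widetilde\eta$ moves the variables $t_i$, $i>1$, only by terms of positive $t_1$-degree, so that $\eta \equiv \mathrm{id} \pmod{A_+^1}$, is not granted by the definition of $\mathfrak{M}_n$: that is precisely the \emph{additional} hypothesis (2) imposed in Theorem \ref{mmt4}, where it is genuinely needed. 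For the present theorem the definition only gives $\widetilde\eta \in Aut_{R[t_1]}(R[t_1,\ldots,t_r])$, and all the paper uses is that fixing $t_1$ forces $\eta(A_+^1) = A_+^1$, so that the replacement of $A$ by $\eta(A)$ preserves both the comaximality $O_P(p) + sA_+^1 = A$ and the reduction modulo $A_+^1P$ after transporting back.
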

	
	\begin{proof}
		One can assume that $R$ is reduced. We will proceed by induction on $d$. If $d=0,$ then $R$ is a product of fields and by Theorem \ref{tp2}$,$ both $P$ and $Um(P/A_+^1P)$ are free. As $R[M] \rightarrow R[M_0^1]$ is a retraction$,$ the surjection follows. Assume $d>0$. Let $S$ be the set of non-zerodivisors of $R$. Then $dim(S^{-1}R)=0,$ and by $d=0$ case we can find an $s \in S$ such that $P_s$ is free. 
		
		Let $p_1 \in Um(P/A_+^1P)$. By the inductive process$,$ $Um(P/sP) \neq \emptyset$ and thus Proposition \ref{p2} gives $\bar{p} \in Um(P/sA_+^1P)$  such that $p \equiv p_1$ modulo $A_+^1P,$ where '$-$' denotes reduction modulo $sA_+^1$. As $O_P(p) + sA_+^1 = A,$ choose $a \in A_+^1$ such that $1 + sa \in O_P(p).$ By (\cite{Lindel-MR1322406}$,$ Lemma 1.2 and Corollary 1.3) there exists $q \in P$ such that $ht(O_P(p + saq)) = rank(P) \geq d + r - n + 1 $. As $p$ and $p +saq$ have the same image modulo $sA_+^1P,$ we can assume $ht(O_P(p)) \geq d+ r - n + 1$.
		
		Let  $(U_j, S_j)_j$ be the relevant information of $M$ at the $n$'th level. As per definition$,$ corresponding to each $U_j \in gen(M)_1$ there exist algebraically independent subset $S_j = \{W_{j_1}, \ldots, W_{j_n}\}$ of $M.$ Consider the composition of maps
		\begin{linenomath*}
			\[R[X_{j_1}, \ldots, X_{j_n}] \xhookrightarrow{  \hspace{0.2cm} i_j  \hspace{0.2cm} } R[X_1, \ldots, X_l] \xtwoheadrightarrow{\beta} R[M].\]
		\end{linenomath*}
		By Lemma \ref{l1} there exists quasi-monic $f_j \in O_P(p) \cap R[S_j]$'s for all $j$. By definition of $\mathfrak{M}_n,$ there exists a common $\widetilde{\eta} \in Aut_{R[t_1]}(R[t_1, \ldots, t_r])$ such that its restriction $\eta \in Aut(A)$ and $\eta(f_j)\in \eta(O_P(p))$ is monic in $U_j$ with coefficients in $\eta(A_0^1)$ for all $j$. As $\widetilde{\eta}$ fixes $t_1,$ we have $\eta(O_P(p)) + s\eta(A_+^1) = A$. Replacing $A$ by $\eta(A)$ we can assume $O_P(p)$ contains monics in $U_j$ with coefficients in $A_0^1$ for all $j$ and $O_P(p) + sA_+^1 = A$. Hence the extension $A_0^1/O_P(p) \cap A_0^1 \hookrightarrow A/O_P(p)$ is integral. Using Proposition \ref{pp2} we obtain a $p_0 \in Um(P)$ such that $\bar{p_0} = \bar{p}$. Note that $p_0 \equiv p \equiv p_1$ modulo $A_+^1$. Thus the map $Um(P) \rightarrow Um(P/A_+^1P)$ is surjective. The second conclusion follows using process similar to above with an added induction on $n,$ where $n=1$ case follows through by Theorem \ref{mmt1}. 
	\end{proof}
	
	Given a projective $A$-module $P,$ we denote by $\mu(P)$ the number of minimal generators of $P$. 
	
	\begin{corollary}
		Let $R$ be a ring of dimension d and $A=R[M],$ where $M$ is a seminormal monoid of rank r. Let $P$ be a projective $A$-module. Then $\mu(P) \leq rank(P) + d + r - 1$. If $M \in \mathfrak{M}_n,$
		then $\mu(P) \leq rank(P) + d + r - n$.
	\end{corollary}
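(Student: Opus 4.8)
The plan is to deduce both bounds from the corresponding Serre dimension estimates together with the standard reduction of the number of generators. First I would isolate the abstract principle that for any $A$ one has $\mu(P) \le rank(P) + S$-$dim(A)$ for every projective $A$-module $P$ of constant rank. Granting this, the two assertions are immediate. Theorem \ref{mmt1} gives $S$-$dim(A) \le \max\{1, d+r-1\}$; whenever $d+r \ge 2$ this equals $d+r-1$ and we obtain $\mu(P) \le rank(P) + d + r - 1$, while in the degenerate range $d+r \le 1$ (that is, $d=0$ and $r=1$) the ring $R[M]$ is a polynomial or Laurent polynomial ring over a finite product of fields, its projective modules are free by Theorem \ref{tp2}, and $\mu(P) = rank(P) = rank(P) + d + r - 1$ directly. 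When $M \in \mathfrak{M}_n$, Theorem \ref{mmt2} improves the estimate to $S$-$dim(A) \le d + r - n$, yielding $\mu(P) \le rank(P) + d + r - n$.

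The substance therefore lies in the abstract principle. Write $\rho = rank(P)$ and $s = S$-$dim(A)$, and suppose for contradiction that $N := \mu(P) \ge \rho + s + 1$. Choose a minimal surjection $\phi \colon A^N \twoheadrightarrow P$; since $P$ is projective the sequence splits, so $A^N \cong P \oplus Q$ with $Q$ projective of rank $N - \rho \ge s+1 > s$. By the very definition of Serre dimension, $Um(Q) \ne \emptyset$, so $\ker\phi \supseteq Q$ contains a unimodular vector $q \in A^N$; choose $\psi \in Hom_A(A^N, A)$ with $\psi(q) = 1$ and set $C = \ker\psi$, a projective module of rank $N-1$ with $A^N = Aq \oplus C$. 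Since $q \in \ker\phi$, for $x = \psi(x)\,q + c$ with $c \in C$ we have $\phi(x) = \phi(c)$, so the restriction $\phi|_C \colon C \twoheadrightarrow P$ is still surjective. Thus $P$ is a quotient of the stably free module $C$, which satisfies $C \oplus A \cong A^N$ and $rank(C) = N - 1 \ge \rho + s \ge s+1$ because $\rho \ge 1$.

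To finish I would show $C$ is free, so that $\mu(P) \le rank(C) = N-1$, contradicting minimality and forcing $N \le \rho + s$. This cancellation is the main obstacle: one must know that a stably free module of rank exceeding $S$-$dim(A)$ is free, equivalently that the elementary group acts transitively on the unimodular elements of $A^N$ in this range. This is exactly the transitivity input used in the patching argument of Proposition \ref{p2}; applied to $C \oplus A \cong A^{N-1} \oplus A$ it cancels the free factor and gives $C \cong A^{N-1}$. The hypothesis $\rho \ge 1$ is what guarantees the rank bound $N-1 \ge s+1$ needed for cancellation is always met, so the induction closes uniformly. Finally, the non-affine case is handled as in Theorem \ref{mmt1}: writing $M$ as a filtered union of seminormal affine submonoids $M_i$, the module $P$ is extended from some $R[M_i]$, so its minimal number of generators is controlled at a finite stage and the bounds pass to the limit.
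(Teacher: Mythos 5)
Your proposal follows essentially the same route as the paper: take a minimal surjection $A^m \twoheadrightarrow P$, use Theorem \ref{mmt1}/\ref{mmt2} to produce a unimodular element $q$ in the (projective) kernel, cancel the summand $Aq$ via the same transitivity/cancellation result invoked in Proposition \ref{p2} (the paper cites it as $A^m/qA \simeq A^{m-1}$), and contradict minimality of $m$; your complement $C = \ker\psi$ is exactly the paper's $A^m/qA$. Your extra care with the degenerate range $d+r \le 1$ and the non-affine limit argument are sound refinements of details the paper's one-line proof elides, not a different method.
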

	
	\begin{proof}
		Let if possible $\mu(P) = m > rank(P) + d + r - n.$ Consider the natural surjection $\phi: A^m \rightarrow P,$ where $Q = ker(\phi)$. By Theorem \ref{mmt2}$,$ there exists $q \in Um(Q)$. As $q \in Q = ker(\phi),$ $\phi$ restricts to a surjection $\bar{\phi}: A^m/qA \rightarrow P$. By (\cite{Maria-MKK1}$,$ Theorem 3.4) $A^{m}/qA \simeq A^{m-1},$ which implies $P$ is generated by $m-1$ elements$,$ a contradiction. 
	\end{proof}
	
	\begin{corollary}\label{c3} 
		Let $M \in \mathfrak{M}_n$ be a $\phi$-simplicial monoid of rank $r$. Then $M \oplus \pz{m} \in \mathfrak{M}_{n+m}$.
	\end{corollary}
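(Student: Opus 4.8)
The plan is to reduce to the case $m=1$ and then verify the three defining conditions of $\mathfrak{M}_{n+1}$ for $N:=M\oplus\pz{}$. Since $M\oplus\pz{}$ is again $\phi$-simplicial (the adjoined free generator $z$ satisfies $z=z^{1}\in\pz{}\subset N$, while $t_i^{p_i}\in M\subset N$ for the old variables), one may adjoin the $m$ free generators one at a time. Thus it suffices to prove the single-variable step ``$K\in\mathfrak{M}_k$ and $\phi$-simplicial $\Rightarrow K\oplus\pz{}\in\mathfrak{M}_{k+1}$'' and iterate it $m$ times to land in $\mathfrak{M}_{n+m}$. I would prove this single step by induction on $n$.

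For the single step, write $N=M\oplus\pz{}$ with new free generator $z$, rank $r+1$, and fix the variable order $t_1<\cdots<t_n<z<t_{n+1}<\cdots<t_r$, i.e. insert $z$ immediately after the $n$-th variable. Condition $(1)$ is immediate since $\mathrm{rank}(N)=r+1\ge n+1$. For condition $(3)$, deleting the bottom variable gives $\widehat{N_1}=\widehat{M_1}\oplus\pz{}$; since $\widehat{M_1}\in\mathfrak{M}_{n-1}$ is $\phi$-simplicial of rank $r-1$, the inductive hypothesis applied to $\widehat{M_1}$ yields $\widehat{M_1}\oplus\pz{}\in\mathfrak{M}_{(n-1)+1}=\mathfrak{M}_{n}=\mathfrak{M}_{(n+1)-1}$, as required; the base case $n=1$ is trivial, as $\mathfrak{M}_1$ is just the class of affine positive monoids and adjoining a free generator preserves that class.

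The heart of the argument, and the main obstacle, is condition $(2')$. With the chosen order the first $n+1$ variables are $t_1,\dots,t_n,z$, and because $z$ is a genuine polynomial variable over $R[M]$, every quasi-monic $f\in R[t_1,\dots,t_n,z]\cap R[N]$ decomposes as $f=\sum_{j}g_j z^{j}$ with $g_j\in R[t_1,\dots,t_n]\cap R[M]$; since $z$ is the largest of these variables, the top coefficient $g_D$ is itself quasi-monic in $R[t_1,\dots,t_n]\cap R[M]$. The plan is then twofold. First, apply the automorphism $\widetilde\eta_M$ supplied by $M\in\mathfrak{M}_n$ (extended to fix $z$) to make $\eta_M(g_D)$ monic in $t_1$; this preserves $R[N]=R[M][z]$ because $\widetilde\eta_M$ preserves $R[M]$ and fixes both $z$ and $t_1$. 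Second, compose with the shear $\sigma\colon z\mapsto z+t_1^{\ell p_1}$, where $p_1$ is chosen with $t_1^{p_1}\in M$ (by $\phi$-simpliciality) and $\ell\gg0$; the map $\sigma$ fixes $t_1$, preserves $R[N]$, and folds the top power of $z$ into a genuine top power of $t_1$. The claim is that $\eta=\sigma\circ(\widetilde\eta_M)_z$ is the desired automorphism, with $\eta(f)$ monic in $t_1$ and all lower coefficients in $A_0^1=R[\widehat{N_1}]=R[\widehat{M_1}][z]$.

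The expected difficulty lies entirely in this second step: I must check that for $\ell\gg0$ the $z$-constant monomial arising from $j=D$ strictly dominates the $t_1$-degree of every other monomial of $\sigma(\eta_M(f))$, so that no cancellation or competing high-$t_1$-degree term survives and the leading $t_1$-graded component is the pure power $t_1^{\deg}$ with unit coefficient (not merely a polynomial with unit leading coefficient). Verifying this domination and the resulting purity of the leading component is the technical crux; once it is in place, Proposition~\ref{pp2}-type integrality is available and the verification of $(2')$ is complete. The base case $n=1$ of $(2')$ is easy, since there $g_D\in R[t_1]\cap R[M]$ is already a one-variable quasi-monic and one may take $\widetilde\eta_M=\mathrm{id}$, so only the shear $\sigma$ is needed.
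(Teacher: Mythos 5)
Your reduction to $m=1$, the ordering $t_1<\cdots<t_n<z<t_{n+1}<\cdots<t_r$, and the verification of conditions $(1)$ and $(3)$ are all fine, and the observation that the top $z$-coefficient $g_D$ of $f=\sum_j g_jz^j$ is a quasi-monic in $R[t_1,\ldots,t_n]\cap R[M]$ is correct. But the gap in condition $(2')$ is not where you placed it. The domination estimate you flag as the crux is routine: for $\ell\gg 0$ the term $t_1^{e+\ell p_1D}$ arising from $\eta_M(g_D)\,t_1^{\ell p_1 D}$ does strictly dominate, and no cancellation occurs. The real problem is the coefficient condition. Condition $(2')$ requires $\eta(f)$ to be monic in $t_1$ \emph{with coefficients in} $A_0^1=R[M_0^1][z]$, i.e., a genuinely monic polynomial in $t_1$ over the subring $A_0^1$ — this is what feeds the integrality hypothesis of Proposition \ref{pp2} — and Remark \ref{r1} relaxes the coefficients to $R[M]$ only for \emph{normal} monoids, which you have not assumed. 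Your composite $\sigma\circ(\widetilde\eta_M)_z$ controls only the top $z$-coefficient: for $j<D$ the elements $\eta_M(g_j)\in R[M]$ are uncontrolled, and the shear $z\mapsto z+t_1^{\ell p_1}$ drags them into the lower coefficients of the $t_1$-expansion. Since in general $R[M]\not\subset R[M_0^1][t_1]$ (e.g.\ $M$ may contain $t_1t_2$ without containing $t_2$), the resulting polynomial is monic only in the weak sense that its leading graded piece is a pure power of $t_1$; its lower coefficients do not lie in $A_0^1$, so membership in $\mathfrak{M}_{n+1}$ is not established. (For $n=1$ your argument survives by accident, since then every $g_j\in R[t_1]$ and all coefficients land in $R[z]\subset A_0^1$; it is the inductive step $n\geq 2$ that fails.)

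The paper shears in the opposite direction, and that is exactly what closes this gap: it sends $Z_i\mapsto Z_i+t_n^c$ for all $m$ new variables at once, with $c\gg 0$, which turns $f$ into a quasi-monic in $R'[t_1,\ldots,t_n]\cap R'[M]$ where the new variables have been absorbed into the base ring $R'=R[\pz{m}]$. Since the defining property of $\mathfrak{M}_n$ is uniform in the coefficient ring, it may be applied over $R'$ to the \emph{entire} sheared polynomial, producing $\widetilde{\eta}\in Aut_{R'[t_1]}(R'[t_1,\ldots,t_r])$ with $\eta(\theta(f))$ monic in $t_1$ with coefficients in $R'[M_0^1]=R[\pz{m}][M_0^1]$, which is precisely $(R[M\oplus\pz{m}])_0^1$; the strong coefficient condition then holds automatically, with no term-by-term control needed, and the composite $\widetilde{\eta}\circ\theta$ still fixes $t_1$ and restricts to $Aut(R[M\oplus\pz{m}])$. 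To repair your write-up, replace your shear into $t_1$ by this base-change shear into $t_n$ applied \emph{before} the $\mathfrak{M}_n$-automorphism taken over $R[z]$; applying $\eta_M$ only to $g_D$ and then folding $z$ into $t_1$ cannot work as stated.
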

	
	\begin{proof}
		Let $M' = M \oplus \pz{m}$ and $m>0$. Let $Z_i$ represent the variables in the $\pz{m}$ direct summand of $M'$. Let $f \in R[t_1, \ldots, t_n, Z_1, \ldots, Z_m] \cap R[M']$ be a quasi-monic. Choose $c \in \pz{}$ large enough so that the $R[M]$-automorphism of $R[M']$ given by
		\begin{linenomath*}
			\[ Z_i \xmapsto{ \hspace{0.2cm} \theta \hspace{0.2cm}} Z_i +  t_{n}^c,\]
		\end{linenomath*}
		for all $i,$ is such that the coefficient of highest order component of $\theta(f)$ is given by $\displaystyle u\hspace{-0.2cm}\prod_{1 \leq i \leq n}\hspace{-0.2cm}t_{i}^{c_{i}}$ and $u \in R^{\times}$. Then $\theta(f)$ is a quasi-monic in $R'[t_1, \dots, t_n] \cap R'[M],$ where $R'=R[\pz{m}]$. By definition of $\mathfrak{M}_n,$ there exists $\widetilde{\eta} \in Aut_{R'[t_1]}(R'[t_1, \ldots, t_r])$ such that its restriction $\eta \in Aut(R'[M]) = Aut(R[M'])$ and $\eta(\theta(f))$ is monic in $t_1$ with coefficients in $(R'[M])_0^1 = R[\pz{m}][M_0^1]$. As $\widetilde{\eta} \circ \theta \in Aut_{R[t_1]}(R[t_1, \ldots, t_r][\pz{m}]),$ we have our result.
	\end{proof}
	
	If $M$ is $\phi$-simplicial$,$ we can assume the much simpler condition $(2')$ instead of $(2)$. The above approach can be replicated subject to minor changes and one can arrive at the same conclusion.
	
	\begin{theorem}\label{t6}
		Let R be a ring of dimension d and $A=R[M],$ where $M \in \mathfrak{M}_n$ is a $\phi$-simplicial seminormal monoid of rank $r \geq 1$.  Assume $P$ to be a projective $A$-module of $rank > dim(A) - n = d +r-n$. Then 
		\begin{enumerate}
			\item the map $Um(P) \rightarrow Um(P/A_+^1P)$ is surjective and
			\item $S$-$dim(A) \leq dim(A) - n = d + r - n$.
		\end{enumerate}
		In particular$,$ if $M \in \mathfrak{M}_r,$ then $S$-$dim(A) \leq d $.
	\end{theorem}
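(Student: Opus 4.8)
The plan is to transplant the proof of Theorem \ref{mmt2} almost verbatim, the only change being that $\phi$-simpliciality lets us invoke the simpler condition $(2')$ in place of $(2)$, so that we produce a single monic in $t_1$ rather than one monic per element of $gen(M)_1$. First I would reduce to $R$ reduced and induct on $d$. For $d=0$, $R$ is a finite product of fields, so Theorem \ref{tp2} makes $P$ and $P/A_+^1P$ free, and the retraction $R[M]\to R[M_0^1]$ immediately gives surjectivity of $Um(P)\to Um(P/A_+^1P)$.

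For $d>0$, localizing at the set $S$ of non-zerodivisors and applying the $d=0$ case to $S^{-1}R$ produces $s\in S$ with $P_s$ free. Starting from $p_1\in Um(P/A_+^1P)$, the inductive hypothesis gives $Um(P/sP)\neq\emptyset$, and Proposition \ref{p2} then yields $\bar p\in Um(P/sA_+^1P)$ lifting $p_1$ modulo $A_+^1P$; using (\cite{Lindel-MR1322406}, Lemma 1.2 and Corollary 1.3) I would arrange $ht(O_P(p))\geq d+r-n+1$.

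The decisive step is where $\phi$-simpliciality enters. Since $M$ is $\phi$-simplicial, $t_1^{p_1}\in M$ for some $p_1\geq 1$, so $t_1^{p_1}\in gen(M)_1$ irrespective of the chosen generators, and it suffices to control the single variable $t_1$. Applying Lemma \ref{l1} to the algebraically independent set $\{t_1,\ldots,t_n\}$ produces one quasi-monic $f\in O_P(p)\cap R[t_1,\ldots,t_n]$, and condition $(2')$ then furnishes a single $\widetilde\eta\in Aut_{R[t_1]}(R[t_1,\ldots,t_r])$ whose restriction $\eta\in Aut(A)$ makes $\eta(f)$ monic in $t_1$ with coefficients in $A_0^1$. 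Because $\widetilde\eta$ fixes $t_1$, the relation $\eta(O_P(p))+s\eta(A_+^1)=A$ persists, so after replacing $A$ by $\eta(A)$ I may assume $O_P(p)$ contains a monic in $t_1$ together with $O_P(p)+sA_+^1=A$.

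The hard part will be justifying that this one monic already suffices for integrality of $A_0^1/(O_P(p)\cap A_0^1)\hookrightarrow A/O_P(p)$, whereas the general case required monics in every element of $gen(M)_1$. The $\phi$-simplicial hypothesis is exactly what lets us both \emph{produce} such a monic (the pure power $t_1^{p_1}\in M$ supplies a genuine $t_1$ to be monic in) and then \emph{conclude} integrality: the monic bounds the $t_1$-degree modulo $O_P(p)$, and each graded piece $A_i^1$ is a finitely generated $A_0^1$-module since $M$ is affine, so $A/O_P(p)$ is module-finite, hence integral, over $A_0^1/(O_P(p)\cap A_0^1)$. (Equivalently, $A$ is module-finite over $R[t_1^{p_1},\ldots,t_r^{p_r}]$, and as $R[t_2^{p_2},\ldots,t_r^{p_r}]\subset A_0^1$, the monic renders $t_1^{p_1}$ integral over $A_0^1$ modulo $O_P(p)$, dragging all of $A$ along with it.) With integrality secured, Proposition \ref{pp2} delivers $p_0\in Um(P)$ with $\bar{p_0}=\bar p$, whence $p_0\equiv p_1$ modulo $A_+^1P$ and $(1)$ holds. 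Finally $(2)$ follows by the same argument augmented with an induction on $n$: since $\widehat{M}_1\in\mathfrak{M}_{n-1}$ is again $\phi$-simplicial of rank $r-1$, the inductive hypothesis bounds $S$-$dim(R[\widehat{M}_1])$, and the surjectivity from $(1)$ propagates unimodular elements upward, the base case $n=1$ being Theorem \ref{mmt1}.
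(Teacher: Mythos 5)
Your proposal is essentially the paper's own proof: the paper gives Theorem \ref{t6} no separate argument, stating only that the proof of Theorem \ref{mmt2} ``can be replicated subject to minor changes'' with condition $(2')$ replacing $(2)$, and your write-up is exactly that replication, correctly isolating the integrality of $A_0^1/(O_P(p)\cap A_0^1)\hookrightarrow A/O_P(p)$ as the only step needing new justification. One caveat on that step, since you supply two arguments for it. Your first argument --- ``the monic bounds the $t_1$-degree modulo $O_P(p)$, so $A/O_P(p)$ is module-finite'' --- does not go through as stated for merely seminormal $M$: rewriting a monomial $m$ of high $t_1$-degree modulo a monic of degree $N$ requires $m/t_1^N\in M$, and this division can leave $M$ on the boundary of the cone when $M$ is seminormal but not normal; it is exactly this division that normality buys (cf.\ Remark \ref{r1} and Gubeladze's Lemma 6.3, which the paper invokes only for \emph{normal} $\phi$-simplicial monoids, e.g.\ in Theorem \ref{mmt4}). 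Your parenthetical argument is the one that actually works and is the intended ``minor change'': $\phi$-simpliciality makes $A$ module-finite over $R[t_1^{p_1},\ldots,t_r^{p_r}]\subseteq A_0^1[t_1^{p_1}]$, so integrality of all of $A/O_P(p)$ reduces to integrality of the single element $t_1^{p_1}$, and this is where the paper's remark that one only needs a monic in $t_1^{p_1}\in gen(M)_1$ (rather than in every element of $gen(M)_1$) should be read: condition $(2')$ is to furnish a monic polynomial in the generator $t_1^{p_1}$ with coefficients in $A_0^1$, which then gives a genuine monic equation for $t_1^{p_1}$ over $A_0^1$ modulo $O_P(p)$. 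Relatedly, Lemma \ref{l1} should be applied to the algebraically independent set $\{t_1^{p_1},\ldots,t_n^{p_n}\}\subset M$ rather than to $\{t_1,\ldots,t_n\}$, since the $t_i$ themselves need not lie in $M$ (the paper does this in the proof of Theorem \ref{mmt4}); the quasi-monic so obtained lies in $R[t_1,\ldots,t_n]\cap R[M]$ as $(2')$ requires. With these adjustments your argument is sound, the reduction steps ($d=0$ base case, choice of $s$, Proposition \ref{p2}, the height increase via Lindel, and the induction on $n$ with base case Theorem \ref{mmt1}) all matching the paper's template.
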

	
	By $M_*$ we mean the monoid $ (int(\mathbb{R}_+(M)) \cap M)\cup \{0\}$. For the definitions of quasi-normal and quasi-truncated monoids refer to  (\cite{Gubeladze-umodrow1-MR1161570}$,$ Section 5 and 6).
	\begin{theorem}\label{t1}
		Let $M$ be a quasi-truncated quasi-normal monoid of rank $ \geq 2$. Then $M \in \mathfrak{M}_2$. As a consequence$,$ if $M$ is $\phi$-simplicial seminormal of rank $\geq 2,$ then $S$-$dim(R[M_*]) \leq d + rank(M) - 2$. 
	\end{theorem}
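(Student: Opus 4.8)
The plan is to check that a quasi-truncated quasi-normal monoid $M$ of rank $r \geq 2$ satisfies the three defining conditions of $\mathfrak{M}_2$, and then to realize the interior monoid $M_*$ as a monoid of this kind so that the Serre-dimension bound falls out of Theorem \ref{mmt2}.

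For the membership $M \in \mathfrak{M}_2$, condition (1) is immediate since $r \geq 2$. For condition (3) I would note that $\widehat{M_1} = \pz{}[t_2, \ldots, t_r] \cap M$ is the face of the positive affine monoid $M$ obtained by setting $t_1 = 0$, hence is again affine and positive and so lies in $\mathfrak{M}_1$. The core of the argument is condition (2): for each $U_j \in gen(M)_1$ one must exhibit an algebraically independent pair $S_j = \{W_{j_1}, W_{j_2}\} \subset W$ and an automorphism $\widetilde{\eta} \in Aut_{R[t_1]}(R[t_1, \ldots, t_r])$ restricting to an element of $Aut(R[M])$ that sends any quasi-monic of $R[S_j]$ to a polynomial monic in $U_j$ with coefficients in $A_0^1$. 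This degree-reducing, $t_1$-fixing automorphism is exactly what Gubeladze's Lemma 6.3 (\cite{Gubeladze-umodrow1-MR1161570}) provides for quasi-truncated quasi-normal monoids: quasi-truncation makes the required pair available at level $n = 2$, and quasi-normality guarantees that $\widetilde{\eta}$ stabilizes $R[M]$. I would extract $S_j$ and $\widetilde{\eta}$ from that lemma and verify that its action has the claimed effect on quasi-monics, which establishes $M \in \mathfrak{M}_2$.

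For the consequence, suppose $M$ is $\phi$-simplicial and seminormal of rank $r \geq 2$. I would argue that the interior monoid $M_* = (int(\mathbb{R}_+(M)) \cap M) \cup \{0\}$ is quasi-truncated quasi-normal of the same rank $r$: passing to the interior is the truncation operation, $gp(M_*) = gp(M)$ so $rank(M_*) = r$, and seminormality of $M$ yields the quasi-normality needed. The first part then gives $M_* \in \mathfrak{M}_2$, and $M_*$ is seminormal, so Theorem \ref{mmt2} with $n = 2$ yields $S$-$dim(R[M_*]) \leq dim(R[M_*]) - 2 = d + rank(M_*) - 2 = d + rank(M) - 2$. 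Since $M_*$ need not be finitely generated, I would, exactly as in the proofs of Theorem \ref{mmt1} and Corollary \ref{c4}, write $M_*$ as a filtered union of affine quasi-truncated quasi-normal submonoids of rank $r$, observe that a projective $R[M_*]$-module is extended from one over an affine member, and apply Theorem \ref{mmt2} there.

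The main obstacle is condition (2): producing the $t_1$-fixing automorphism that lowers a quasi-monic to a monic in the prescribed generator, uniformly over $gen(M)_1$. Everything rests on correctly transcribing Gubeladze's Lemma 6.3 into the present notation and checking that the pair $S_j$ it yields is algebraically independent and contained in the chosen generating set $W$; by comparison, verifying that $M_*$ is quasi-truncated quasi-normal and discharging the non-affine case through filtered unions are routine.
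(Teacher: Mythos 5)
Your treatment of the membership $M \in \mathfrak{M}_2$ misassigns both the key lemma and the division of labour between the two hypotheses. Since quasi-truncated monoids are $\phi$-simplicial, the paper only needs the relaxed condition $(2')$: a single quasi-monic $f \in R[t_1,t_2] \cap R[M]$ must be made monic in $t_1$. The automorphism achieving this, $\eta(t_2) = t_2 + t_1^c$, fixing $t_1$ and restricting to $R[M]$, comes from (\cite{Gubeladze-umodrow1-MR1161570}, Lemma 6.7), and it is \emph{quasi-truncatedness} that makes $\eta$ stabilize $R[M]$. Quasi-normality enters only afterwards, via Remark \ref{r1} (which rests on Lemma 6.3 of the same paper): it allows the monic $\eta(f)$ to have coefficients in $R[M]$ rather than in $R[M_0^1]$, i.e., it secures the integrality built into the definition of $\mathfrak{M}_n$. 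You swap these roles ("quasi-truncation makes the required pair available, quasi-normality guarantees $\widetilde{\eta}$ stabilizes $R[M]$") and cite Lemma 6.3 as the source of the automorphism; but as used throughout this paper, Lemma 6.3 is an integrality statement about $R[M_0^1] \rightarrow R[M_0^1]/(h)$ for $h$ monic in $t_1$ and produces no automorphism. So the step you yourself identify as the core of the argument rests on a lemma that does not deliver it; your plan to verify the full condition $(2)$, with a pair $S_j$ for every $U_j \in gen(M)_1$, is also more than is needed and more than Lemma 6.7 provides.

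The more serious gap is in the consequence. The monoid $M_*$ is essentially never affine --- already for $M = \mathbb{Z}_{+}^{2}$ every element $(1,n)$ with $n \geq 1$ is irreducible in $M_*$, so $M_*$ is not finitely generated --- whereas quasi-truncated monoids are affine by definition; hence your opening claim that $M_*$ ``is quasi-truncated quasi-normal of the same rank'' is false, and passing to the interior is not the truncation operation. Your fallback, writing $M_*$ as a filtered union of affine quasi-truncated quasi-normal submonoids, is precisely the nontrivial content, and it is not available for seminormal $M$ directly: the paper first uses seminormality to replace $M_*$ by $(n(M))_*$, where $n(M)$ is the normalization, via (\cite{Gubeladze1-MR2508056}, Proposition 2.40), and only then invokes (\cite{Gubeladze-umodrow1-MR1161570}, Theorem 3.1), a theorem about \emph{normal} monoids, to write $(n(M))_*$ as a filtered union of quasi-truncated normal monoids $Q_i$; each $Q_i$ lies in $\mathfrak{M}_2$ by the first part, and the Serre-dimension bound follows from Theorem \ref{t6} together with the standard extension argument for filtered unions. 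Without the intermediate identity $M_* = (n(M))_*$, your assertion that ``seminormality of $M$ yields the quasi-normality needed'' has no support, so the second half of your argument does not go through as written.
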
 
	
	\begin{proof}
		Let $M$ be a quasi-truncated quasi-normal monoid of rank $r \geq 2$ and $f \in R[t_1, t_2] \cap R[M],$ be a quasi-monic. Then by (\cite{Gubeladze-umodrow1-MR1161570}$,$ Lemma 6.7) there exists an $R[t_1]$-automorphism $\eta$ of $R[M]$ given by $\eta(t_2) = t_2 + t_1^{c}$ where $c \in \mathbb{Z}_{>0}$ and thus $\eta(f)$ is monic in $t_1$. As $M$ is quasi-normal$,$ by Remark \ref{r1} $M \in \mathfrak{M}_2$. For the second part$,$ as $M$ is seminormal by (\cite{Gubeladze1-MR2508056}$,$ Proposition 2.40) we have $M_* = (n(M))_*,$ where $n(M)$ denotes normalization of $M$. Invoke  (\cite{Gubeladze-umodrow1-MR1161570}$,$ Theorem 3.1) to obtain quasi-truncated normal monoids $Q_i$'s such that $n(M)_*$ is the filtered union of $Q_i$'s. This concludes the proof. 
	\end{proof}

	In \cite{Gubeladze-MR3853049}$,$ Gubeladze introduced the concept of tilted $R$-subalgebras of $R[t_1, \ldots, t_r]$. We repurpose this concept from the point of view of the variables and define $t_i$ to be a \textit{strongly tilted variable} of the monoid $M \subset \pz{}[t_1, \ldots, t_r] = F,$ if there exists $c \in \pz{}$ such that $t_i^sF \subset M$ for all $s \geq c$. If $M$ is affine$,$ then we say $t_i$ is a \textit{tilted variable of M} if for all $j \neq i,$ there exists a $c_j \in \pz{}$ such that $t_i^{c_j}t_j \in M$ and $t_i \in M$. It follows from definition that if $t_i$ is a tilted variable of $M \subset F,$ then any monoid $N$ such that $M \subset N \subset F$ has $t_i$ as its tilted variable. As a straightforward example of such monoid$,$ consider $M = \pz{}[t_1, t_1t_2] \subset \pz{}[t_1, t_2],$ where $t_1$ is a tilted variable of $M,$ often rewritten as $R[M]$ is $t_1$-tilted. The polynomial algebra $R[t_1, \ldots, t_r]$ is tilted in all it's variables. The existence of a tilted variable leads to our required type of automorphism: 
	
	\begin{lemma}\label{l3}
		Let R be a ring  and $A=R[M],$ where $M$ is an affine positive monoid of rank $r$. Let $t_1$ be a tilted variable of $A$. If $f \in A$ is a quasi-monic$,$ then there exists an $\eta \in Aut(A)$ such that $\eta(f)$ is monic in $t_1$.
	\end{lemma}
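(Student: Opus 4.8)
The plan is to produce $\eta$ explicitly as the restriction to $A$ of an elementary (triangular) automorphism of the ambient polynomial ring, and to split the argument into a \emph{membership} part (checking that this automorphism preserves $M$, which is where the tilted hypothesis enters) and a standard \emph{making-monic} part (where the quasi-monic hypothesis enters). First I would fix a finite generating set $\{W_1, \ldots, W_l\} \subset M$ of monomials in $\pz{}[t_1, \ldots, t_r]$, and record from the tilted hypothesis that $t_1 \in M$ and $t_1^{c_j} t_j \in M$ for $j \geq 2$. For parameters $N_2, \ldots, N_r \in \pz{}$ to be fixed later I would consider the $R$-algebra automorphism $\widetilde\eta$ of $R[t_1, \ldots, t_r]$ given by $\widetilde\eta(t_1) = t_1$ and $\widetilde\eta(t_j) = t_j + t_1^{N_j}$ for $j \geq 2$; its inverse sends $t_j \mapsto t_j - t_1^{N_j}$, so both maps have the same monomial support.

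Next I would show that $\widetilde\eta$ restricts to an automorphism of $A = R[M]$ once the $N_j$ are large. Expanding $\widetilde\eta(W)$ for a generator $W = t_1^{a_1}\prod_{j\ge2}t_j^{a_j}$, every monomial that occurs has the form $\mu_{\vec b} = t_1^{a_1 + \sum_{j\ge2}N_j(a_j - b_j)}\prod_{j\ge2}t_j^{b_j}$ with $0 \le b_j \le a_j$. Since $\prod_{j\ge2}(t_1^{c_j}t_j)^{b_j} = t_1^{\sum_{j\ge2} c_j b_j}\prod_{j\ge2}t_j^{b_j}\in M$ and $t_1\in M$, one gets $\mu_{\vec b}\in M$ as soon as its $t_1$-exponent satisfies $a_1 + \sum_{j\ge2} N_j(a_j - b_j) \ge \sum_{j\ge2} c_j b_j$. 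For $\vec b = \vec a$ this monomial is just $W \in M$; for $\vec b \ne \vec a$ at least one $a_j - b_j \ge 1$, so the left-hand side is $\ge \min_j N_j$, and it suffices to take every $N_j \ge B := \max_{W}\sum_{j\ge2}c_j\, a_j(W)$, a finite bound depending only on the generators. Hence $\widetilde\eta(W),\ \widetilde\eta^{-1}(W) \in A$ for all $W$, and $\eta := \widetilde\eta|_A$ is an automorphism of $A$.

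For the making-monic step I would assign to each monomial $t^\alpha$ the weight $w(\alpha) = \alpha_1 + \sum_{j \ge 2} N_j \alpha_j$, which is precisely the top $t_1$-degree of $\widetilde\eta(t^\alpha)$. If the $N_j$ are chosen large and rapidly increasing (with $N_r \gg \cdots \gg N_2 \gg 1$, the growth governed by the exponents occurring in $f$), then on the finite monomial support of $f$ the weight $w$ refines the lexicographic order $t_1 < \cdots < t_r$, so $w$ is attained at a strict maximum at $\alpha^\ast$, the exponent of $H(f)$. Consequently, reading $\eta(f)$ as a polynomial in $t_1$ over $A_0^1$, its top $t_1$-degree is $w(\alpha^\ast)$ with leading coefficient $L(f) \in R^\times$ and no cancellation from the remaining monomials of $f$, whose weights are strictly smaller. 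Thus $\eta(f)$ is monic in $t_1$, and choosing the $N_j$ to meet both the lower bound $N_j \ge B$ of the previous step and the rapid-growth requirement here finishes the proof.

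I expect the membership step to be the only genuine obstacle: it is exactly there that one must exploit that $t_1$ is tilted, and the crux is that the extra powers of $t_1$ produced by the substitution (present precisely for the off-diagonal monomials $\vec b \ne \vec a$) outweigh the generator-dependent bound $\sum_{j\ge2} c_j b_j$ needed to absorb the $t_j$'s back into $M$; note this works even though $t_1$ tilted does \emph{not} imply $t_1$ strongly tilted, since the bound varies with the generator but stays finite. The second step is the classical weight/Nagata trick and is routine. One bookkeeping point worth flagging is that \emph{monic} here should be read as having unit leading coefficient in $A_0^1$; since the top term of $\eta(f)$ is the single monomial $L(f)\,t_1^{w(\alpha^\ast)}$ with $L(f)$ a unit, this holds, and one may rescale by $L(f)^{-1}$ if a literal leading coefficient $1$ is desired.
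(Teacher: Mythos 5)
Your proposal is correct and follows essentially the same route as the paper's proof: a triangular $R[t_1]$-automorphism $t_j \mapsto t_j + t_1^{N_j}$ ($j \geq 2$), shown to restrict to an automorphism of $A$ exactly as you do, by absorbing the off-diagonal monomials back into $M$ via $t_1 \in M$ and $t_1^{c_j}t_j \in M$, with exponents chosen beyond a bound coming from the finitely many generators. The only divergence is that the paper uses a single exponent $q > \max\{cc_j, l\}$ for every variable, while your rapidly increasing weights $N_2 \ll \cdots \ll N_r$ additionally force the weight to attain a strict maximum at the exponent of $H(f)$, ruling out ties and cancellation in the top $t_1$-coefficient --- a more careful execution of the same Nagata-type trick.
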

	
	\begin{proof} 
		Let $F=\pz{}[t_1, \ldots, t_r]$ and $f = f_{0} + f_{1} + \ldots + f_{l} \in A \subset R[F]$ be a quasi-monic of degree $l  \geq 1$. Let $f_{l} = u\prod\limits_{k=1}^{r} {t_k}^{p_{k}}$ for some $u \in R^{\times}$ and $p_{k} \in \pz{}$ for $1 \leq k \leq r$. As $A$ is $t_1$-tilted$,$ there exists a $c_j \in \mathbb{Z}_{>0}$ such that $t_1^{c_j}t_j \in M$ for all $j \neq 1$ and $t_1 \in M$. 
		
		We wish to define an $R[t_1]$-automorphism of $A$ of the form $t_k \rightarrow t_k + t_1^{q}$
		for $2 \leq k \leq r,$ which satisfies the required. As $M$ is affine we have $A = R[m_1, \ldots, m_k]$. Let $c > max$ \{$tot$-$deg(m_i)$\}. Choose $q > max \{cc_j, l\}$. Then $\eta(f)$ is monic in $t_1$ and $\eta(m_i) \in A$ for all $i,$ and hence $\eta \in  Aut(A)$.
	\end{proof} 
	
	The above lemma holds for any (not necessarily affine) $R$-algebra $A \subset R[t_1, t_2, \ldots, t_r],$ which has a strongly tilted variable and proves that $\eta(f)-f \in A$ for all $f \in R[t_1, \ldots, t_r]$. As we are dealing with affine monoid algebras$,$ we refrain from using the said version in the interest of simplicity.
	
	\begin{example}
		Using techniques developed in the previous discussion$,$ we improve the Serre dimension of some monoid algebras: 
		\begin{enumerate}
			\item If $M=\pz{r}$ (free positive monoid  of rank $r$)$,$ then $M \in \mathfrak{M}_r$.
			\item The results corresponding to the special class of monoid $\mathcal{C}(\phi)$ introduced in \cite{MKK&HPS-MR3647151}$,$ can be subsumed into that of $\mathfrak{M}_n$. To be exact$,$ if $M \in \mathcal{C}(\phi)$ is of rank $r,$ then $M \in \mathfrak{M}_r$. Hence all normal rank 2 monoids belong to $\mathfrak{M}_2$ by (\cite{MKK&HPS-MR3647151}$,$ Corollary 3.6).
			\item Let $M \subset \pz{}[t_1, \ldots, t_r]$ be a $t_1$-tilted $\phi$-simplicial quasi-normal monoid of rank $r$. Then the enveloping normal monoid corresponding to $M$ is the free monoid $\pz{r}$. Then seminormalization of $M'$ (written as $sn(M')$) contains $t_i$ for all $i,$ therefore $sn(M') = \pz{r}$. For projective $R[M']$-modules $P$ of $rank>1,$ by Lemma \ref{pl1} and Theorem \ref{pt1}, we can conclude $Um(P) \neq \emptyset$.
			\item Let $M = \pz{}[x_1, x_2, x_1x_4, x_2^2x_4, x_3x_4]$. Then $M$ is a non $\phi$-simplicial seminormal monoid of rank 4. Then  we claim that $M \in \mathfrak{M}_2$. Let $(U_1 = x_1, U_2 = x_1x_4, S_1 = \{x_1, x_2\}, S_2 = \{x_1x_4, x_3x_4 \})$ be the relevant information of $M$. Then given quasi-monics $f_j \in R[S_j],$ by following the working of Lemma \ref{l3}$,$ there exists $p \in \mathbb{Z}_{>0}$ such that $\widetilde{\eta} \in Aut_{R[x_1, x_4]}(R[x_1, x_2, x_3,x_4])$ given by $\widetilde{\eta}(x_2) = x_2 + x_1^p, ~~~~~\widetilde{\eta}(x_3)  = x_3 + x_1^px_4^{p-1}$ which restricts to an $R[x_1]$-automorphism $\eta$ of $R[M]$ and $\eta(f_j)$ is monic in $U_j$ for $j=1,2$ with coefficients in $R[M_0^1]$. Also $\widehat{M}_1 = M_0^1 = \pz{}[x_2, x_3x_4, x_2^2x_4] \simeq \pz{3} \in \mathfrak{M}_3$.
		\end{enumerate}
	\end{example}


	\section{Applications}
	\subsection{Segre Extension}
	In the following theorem$,$ we identify the monoid corresponding to the monoid algebra $S_{mn}(R)$ and maneouver it by means of a monoid isomorphism. This identification in combination with Theorem \ref{mmt2}$,$ yields a proof of Theorem \ref{mt3}.
	
	\begin{theorem}\label{mmt3}
		Let $R$ be a ring of dimension $d$ and $A= S_{mn}(R)$ be the Segre extension of $R$ over mn variables. Let $k(m,n) = \Big[\frac{m+n-1}{min\{m, n\}}\Big]$. Then there exists a monoid $M \in \mathfrak{M}_{k(m,n)}$ such that $A \simeq R[M]$. As a consequence$,$ $S$-$dim(A) \leq dim(A) - k(m,n) = d + m + n - 1 - k(m,n)$. In addition$,$ if $N \in PS(M)$ is a seminormal monoid$,$ then $N \in \mathfrak{M}_{max\{m,n\}} \subset \mathfrak{M}_{k(m,n)}$.
	\end{theorem}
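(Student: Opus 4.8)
The plan is to present $S_{mn}(R)$ as a monoid algebra $R[M]$, pin down $M$ together with a convenient embedding via a monoid isomorphism, and then prove $M\in\mathfrak{M}_{k(m,n)}$ by induction on $m+n$; the definition of $k(m,n)$ is tuned exactly so that the inductive bookkeeping closes, and the Serre dimension bound is then immediate from Theorem \ref{mmt2}.

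First I would identify $M$. Generalizing (\cite{Swan-MR1144038}, Lemma 12.11), the binomial relations generated by the $2\times2$ minors of $Y$ are precisely the relations among the monomials $g_{ij}=s_it_j$ inside $\pz{}[s_1,\dots,s_m,t_1,\dots,t_n]$, so $A=S_{mn}(R)\simeq R[M]$ with $M=\pz{}[\,g_{ij}\mid 1\le i\le m,\ 1\le j\le n\,]$, the Segre product monoid. Equivalently $M\simeq\{(\alpha,\beta)\in\pz{m}\times\pz{n}\mid |\alpha|=|\beta|\}$, which is normal, hence seminormal, of rank $m+n-1$; thus $dim(A)=d+m+n-1$. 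I would then fix, via a monoid isomorphism, the embedding $M\hookrightarrow\pz{r}$ with $r=m+n-1$ and coordinates $a_i=s_i/s_1$ $(2\le i\le m)$, $b_j=t_j/t_1$ $(2\le j\le n)$, $c=s_1t_1$, so the generators read $c,\ a_ic,\ b_jc,\ a_ib_jc$. Note that $M$ is not $\phi$-simplicial (no power of any $a_i$ or $b_j$ lies in $M$), which forces the use of the full condition (2) rather than $(2')$.

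Next I set up the induction, assuming $m\le n$ (otherwise transpose the matrix). If $m=n$ then $k(m,n)=[\,(2m-1)/m\,]=1$ and $M\in\mathfrak{M}_1$ trivially, so suppose $m<n$ and relabel the chosen grading variable as $t_1:=b_n$. Then $gen(M)_1=\{g_{1n},\dots,g_{mn}\}$ has exactly $min\{m,n\}=m$ elements, and $\widehat{M}_1=\pz{}[t_2,\dots,t_r]\cap M$ is the Segre monoid attached to $S_{m,n-1}$, of rank $m+n-2$. By the inductive hypothesis $\widehat{M}_1\in\mathfrak{M}_{k(m,n-1)}$, and since $[(m+n-2)/m]\ge[(m+n-1)/m]-1$ we get $k(m,n-1)\ge k(m,n)-1$, so $\widehat{M}_1\in\mathfrak{M}_{k(m,n)-1}$; this is condition (3). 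The induction bottoms out at the free monoid $S_{1,n}\simeq\pz{n}\in\mathfrak{M}_n=\mathfrak{M}_{k(1,n)}$ and at the trivial square case. Condition (1), namely $k(m,n)\le r$, holds because $min\{m,n\}\ge1$.

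The crux is condition (2). Here the $m$ generators in $gen(M)_1$ are the parallel monomials $g_{in}=a_ib_nc$ (with $a_1:=1$), differing only in the row factor $a_i$. For each $U=g_{in}$ I must produce an algebraically independent set $S_i\subset W$ of size $k(m,n)$ and, crucially, a single $\widetilde\eta\in Aut_{R[t_1]}(R[t_1,\dots,t_r])$ restricting to $\eta\in Aut(R[M])$, such that $\eta(f_i)$ is monic in $U$ with coefficients in $A_0^1=R[\widehat{M}_1]$ for every quasi-monic $f_i\in R[S_i]$. Modeled on Lemma \ref{l3} and the worked example in Section 4, I would build $\widetilde\eta$ from elementary tilts $t_k\mapsto t_k+(\text{monomial in }t_1\text{ and the fixed variables})$ arranged so that each partner $V\in S_i\setminus\{U\}$ satisfies $\eta(V)=V+U^{e}(\cdots)$ of strictly larger $U$-degree, forcing the top term of $\eta(f_i)$ to be a pure power of $U$; the parallel form of the $g_{in}$ is what lets one common $\widetilde\eta$ serve all $i$ at once. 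The restriction $\eta\in Aut(R[M])$ limits which monomials may be added, and the main obstacle is to show, by a direct analysis of the exponent vectors of the $g_{ij}$ organized by rows, that the largest independent set which can be tilted onto powers of $U$ compatibly and uniformly in $i$ has size exactly $[\,(m+n-1)/min\{m,n\}\,]$. Granting this, $M\in\mathfrak{M}_{k(m,n)}$, and since $M$ is seminormal Theorem \ref{mmt2} yields $S\text{-}dim(A)\le dim(A)-k(m,n)=d+m+n-1-k(m,n)$.

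Finally, for the addendum let $N\in PS(M)$ be seminormal. Now $N$ is $\phi$-simplicial, so I may replace (2) by the lighter criterion $(2')$: it suffices to tilt a single quasi-monic in $R[t_1,\dots,t_{max\{m,n\}}]\cap R[N]$ onto a monic in $t_1$ (using Remark \ref{r1} when $N$ happens to be normal). Peeling along the larger side now produces a chain of length $max\{m,n\}$ bottoming at a free monoid, and the same tilt construction---unobstructed because powers of every variable are now available---shows $N\in\mathfrak{M}_{max\{m,n\}}$. Since $max\{m,n\}\ge[\,(m+n-1)/min\{m,n\}\,]=k(m,n)$ and the family $\{\mathfrak{M}_n\}$ is descending, this gives $\mathfrak{M}_{max\{m,n\}}\subset\mathfrak{M}_{k(m,n)}$, as claimed.
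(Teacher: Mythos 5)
Your outline reproduces the paper's architecture faithfully: Swan's Lemma 12.11 to present $A$ as a monoid algebra of rank $m+n-1$, induction on $n$ peeling one column so that $\widehat{M}_1$ is the monoid of $S_{m(n-1)}(R)$, the arithmetic $k(m,n-1)\ge k(m,n)-1$ for condition (3), Theorem \ref{mmt2} for the dimension bound, and the $PS(M)$ addendum via $(2')$ and a tilted variable. But at the decisive point --- condition (2) --- you write ``Granting this,'' and that is exactly where the theorem lives. The paper does the granted step explicitly: after the isomorphism $\theta(y_{1j})=x_j$, $\theta(y_{ij})=x_jx_{n+i-1}$, it takes $S_1=\{x_1,\ldots,x_k\}$ and $S_j=\{x_{n+j-1}x_1,\,x_{n+j-1}x_{jk-k+1},\ldots,x_{n+j-1}x_{jk-1}\}$, and builds one $\widetilde{\eta}$ tilting pairwise disjoint blocks of $k-1$ of the $x_1$-degree-zero generators $x_2,\ldots,x_n$, by $x_1^{d_1}$ for $U_1$ and by $x_1^{d_j}x_{n+j-1}^{d_j-1}=(x_1x_{n+j-1})^{d_j-1}x_1\in M$ for $U_j$; each partner then maps to itself plus $U_j^{d_j}$, giving monicity in $U_j$ with coefficients in $A_0^1$, and the blocks fit inside $\{2,\ldots,n\}$ precisely when $m(k-1)\le n-1$, i.e.\ $k\le (m+n-1)/m$ --- the pigeonhole from which the formula $k(m,n)$ actually arises. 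None of this allocation or counting appears in your sketch, and your stronger assertion that the largest tiltable independent set has size \emph{exactly} $k(m,n)$ is both unnecessary (only existence of sets of size $k$ is needed) and unproven.

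Moreover, the embedding you fixed makes the deferred step genuinely problematic, not merely unfinished: membership in $\mathfrak{M}_n$ (and the class $PS(M)$) is relative to the embedding $M\subset \pz{r}$, and in your coordinates $c=s_1t_1$, $a_i=s_i/s_1$, $b_j=t_j/t_1$ with $t_1:=b_n$, the only ambient coordinate lying in $M$ is $c$. Here an element $c^{\gamma}\prod a_i^{\alpha_i}\prod b_j^{\beta_j}$ lies in $M$ iff $\gamma\ge\sum\alpha_i$ and $\gamma\ge\sum\beta_j$, so the natural tilt $b_j\mapsto b_j+a_i^{d}b_n^{d}c^{d-1}$ (which would add $U_i^{d}$ to the partner $b_jc$) sends the generator $a_kb_jc$, $k\ne i$, to $a_kb_jc+a_ka_i^{d}b_n^{d}c^{d}$ with $a_ka_i^{d}b_n^{d}c^{d}\notin R[M]$ (total $a$-degree $d+1$ exceeds the $c$-exponent $d$), so it is not an automorphism of $R[M]$; the repaired tilt $a_k\mapsto a_k+a_i^{d}b_n^{d-1}c^{d-1}$ does restrict to $R[M]$ but its partners $a_kb_nc$ have positive $t_1$-degree, so the resulting polynomial is monic in $U_i$ with coefficients \emph{outside} $A_0^1$, violating condition (2). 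The paper's isomorphism $\theta$ exists precisely to straighten the first row into ambient coordinates and sidestep this. The same coordinate defect breaks your addendum: Lemma \ref{l3} requires the fixed variable to be a tilted variable of $N$, and for a general seminormal $N\in PS(M)$ (e.g.\ $N$ generated by $M$ together with $a_i^2,b_j^2$) no tilt $a_k\mapsto a_k+b_n^{2p}$ preserves $R[N]$ (a parity obstruction in the $c$-degree-zero part), while grading by $c$ --- the one variable tilted for every $N\supset M$ --- yields a trivial $\widehat{M}_1$ and destroys the induction for condition (3); in the paper's coordinates $x_1$ performs both roles simultaneously. So you would need to adopt the paper's embedding and then actually carry out the block construction; as written, the proposal asserts the core of the theorem rather than proving it.
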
  
	\begin{proof}
		It is sufficient to provide a proof for the case $m \leq n$. We will induct on $n$. If $n=1,$ then $A = S_{mn}(R)$ is a polynomial algebra and we are done by \cite{Bhat-Roy-MR727374}. Assume $n \geq 2$. From (\cite{Swan-MR1144038}$,$ Lemma 12.11) $A$ is isomorphic to the monoid ring $R[M']$ presented by generators $\{y_{ij}\}_{1 \leq i \leq m, 1 \leq j \leq n}$ where $y_{ij}y_{kl} = y_{il}y_{kj}$ for $i \neq k$ and $j \neq l$. Define $M$ as
		\begin{linenomath*}
			$$M = \pz{}[x_1, \ldots x_n, x_ix_j~|~1 \leq i \leq n \text{~and~} n+1 \leq j \leq m+n-1].$$
		\end{linenomath*}
		Using $gp(M) = \pz{}[x_1^{\pm 1}, \ldots, x_n^{\pm 1}, x_{n+1}^{\pm 1}, x_{n+2}^{\pm 1}, \ldots, x_{n + m - 1}^{\pm 1}],$ we have $rank(M) = m + n -1$. Consider the monoid homomorphism $\theta : M' \longrightarrow M,$
		where the generators of $M'$ are mapped to generators of $M$ in the given order:
		\begin{linenomath*} 
			\[ 
			\theta(y_{ij})= \left\{
			\begin{array}{ll}
			x_j & i =1, \\
			x_jx_{n+i-1}& i > 1. \\
			\end{array} 
			\right. 
			\]
		\end{linenomath*}
		Note that $\theta$ preserves the relations of $M',$ i.e.$,$ $\theta(y_{ij}y_{kj}) = \theta(y_{il}y_{kj})$  for $i \neq k$ and $j \neq l$. We claim that $\theta$ is an isomorphism. Surjectivity of $\theta$ is straightforward. For injectivity$,$ consider the group homomorphism $\phi: gp(M) \longrightarrow gp(M')$ defined by
		\begin{linenomath*}
			\[ 
			\phi(x_{j})= \left\{
			\begin{array}{ll}
			y_{1j} &  1 \leq j \leq n, \\
			y_{11}^{-1} y_{(j+1-n)1} & n < j \leq m + n - 1. \\
			\end{array} 
			\right. 
			\]
		\end{linenomath*}
		For $1 \leq  i \leq n$ and $1 \leq j \leq m-1,$ we can deduce $\phi(x_ix_{n+i}) = y_{1i}y_{11}^{-1}y_{(j+1)1} = y_{(j+1)i},$ using the relation between ${y_{ij}}$'s. Restricting $\phi$ to $M,$ we get $\phi(M)=M',$ $\theta \circ \phi\big|_M = Id_M$ and $\phi\big|_M \circ \theta = Id_{M'}$. Thus $\theta$ is an isomorphism and $dim(A) = dim(R) + rank(M) = d + m + n - 1$.
		
		\textit{Claim}: $M \in \mathfrak{M}_k,$ where $k = k(m,n) = \big[\frac{m+n-1}{m}\big]$.
		
		Observe $gen(M)_1 = \{x_1, x_1x_{n+1}, \ldots, x_1x_{n+m-1}\}$ and identify it's generators as $U_1 = x_1$ and $U_j = x_1x_{n+j-1}$ for $2 \leq j \leq m$. Choose $S_1 = \{x_1, \ldots, x_k\}$ and $S_j = \{x_{n+j-1}x_1, x_{n+j-1}x_{jk-(k-1)}, \ldots, x_{n+j-1}x_{jk-1}\}$ for all $j \geq 2$. Then given quasi-monics $f_j \in R[S_j],$ consider the following map:
		\begin{linenomath*}
			\[ 
			\widetilde{\eta}(x_i)= \left\{
			\begin{array}{ll}
			x_i + x_1^{d_1} & 2 \leq i \leq k, \\
			x_i + x_1^{d_j}x_{n+j-1}^{d_j-1} & jk-(k + j-2) < i \leq jk-(j-1) \text{~and~} j \geq 2,\\
			x_i & \text{else}.
			\end{array} 
			\right. 
			\]
		\end{linenomath*}
		where $d_j > tot$-$deg(f_j)$. Here $\widetilde{\eta}$ is an $R[x_1]$-automorphism of  $R[x_1, \ldots x_n, x_{n+1}, x_{n+2}, \ldots, x_{m+n-1}].$ On restricting to $R[M],$ one may see that $\eta = \widetilde{\eta}\big|_{R[M]} \in Aut_{R[x_1]}(R[M])$ and $\eta(f_j)$ is monic in $U_j$ with coefficients in $R[M_0^1]$. As
		\begin{linenomath*}
			\[ \widehat{M_1} = M_0^1 = \pz{}[x_2, \ldots x_n, x_ix_j~|~2 \leq i \leq n \text{~and~} n+1 \leq j \leq m+n-1] \simeq \{y_{ij} \mid 1 \leq i \leq m, 1 < j \leq n\},\]
		\end{linenomath*} 
		by induction  $R[\widehat{M_1}] \simeq S_{m(n-1)}(R)$. Note that if $k>1,$ then $m < n$. By the inductive process we have $\widehat{M_1} \in \mathfrak{M}_{k'},$ where $k-1 \leq k' = \Big[\frac{m+n-2}{m} \Big] \leq k$. Hence $M \in \mathfrak{M}_k$. As $M$ is seminormal$,$ the conclusion holds by Theorem \ref{mmt2}.

		We now prove the second part of the proof again by induction on $n$. Let $N \in PS(M)$ be a seminormal monoid. If $n=1,$ then $N =  \pz{} \in \mathfrak{M}_r,$ by \cite{Bhat-Roy-MR727374}. Assume $n > 1$. Then as $N$ is a $x_1$-tilted $\phi$-simplicial monoid$,$ we only need to find a monic in $x_1$. Let $f \in R[x_1, \ldots, x_n]$ be a quasi-monic. Then there exists large enough $p$ such that the $R[x_1, x_{n+1}, \ldots, x_{m+n-1}]$-automorphism of $R[x_1, \ldots, x_{m+n-1}]$ given by $x_i \mapsto x_i + x_1^p$ for $1 < i \leq n,$ restricts to an $R[x_1]$-automorphism $\eta$ of $R[N]$. By Lemma \ref{l3}$,$ $\eta(f)$ is monic in $x_1$ with coefficients in $R[x_2, \ldots, x_n] \subset R[\widehat{N_0^1}]$. As $\widehat{N_1} = N_0^1 \in PS(\widehat{M_1})$ is seminormal and $R[\widehat{M_1}] \simeq S_{m(n-1)}(R),$ by induction $\widehat{N_1} \in \mathfrak{M}_{n-1}$. Thus $N \in \mathfrak{M}_{n}$. Since we have assumed $m \leq n,$ we get $N \in \mathfrak{M}_{max\{m, n\}} \subset \mathfrak{M}_{k(m,n)}$.
	\end{proof}
	
	\subsection{Rees Algebra}
	
	Let $R$ be a ring and $\mathcal{I} = \{I_n\}$ a filtration of $R,$ where $I_j \subset I_{j-1}$ and $I_0 = R$. Denote by $R[\mathcal{I}t] = \bigoplus \limits_{n \geq 0} I_nt^n$ and $R[\mathcal{I}t, t^{-1}] = \bigoplus \limits_{n \in \mathbb{Z}} I_nt^n,$ the Rees algebra and extended Rees algebra of $R$ w.r.t. $\mathcal{I},$ respectively. If $\mathcal{I}_n$ is the $I$-adic filtration of R$,$ then by $R[It]$ and $R[It,t^{-1}]$ we denote the corresponding Rees algebras. These rings are also referred to as blowup algebras as $Proj(R[It])$ is the blowup of $Spec(R)$ along the subscheme defined by $I$ and have their application in the study of desingularization. From an algebraic viewpoint it was studied by Rees in \cite{Rees-MR95843}. 
	
	In \cite{Rao-Sarwar}$,$ Rao-Sarwar proved that $S$-$dim(R[It]) \leq dim(R),$ when $R$ is a domain. The following generalizes this result when $R$ and $\mathcal{I}_n$ have a certain form and follows as a straightforward corollary to Theorem \ref{mt1}.
	
	\begin{proposition}
		Let $R$ be a ring of dimension $d$ and $B = R[X_1, \ldots, X_m]$. Let $\mathcal{I} = \{I_n\}$ be a filtration of $B,$ where ${I}_n \subset B$ are ideals generated by non-constant monomials for all $n,$ and $A=B[\mathcal{I}t]$ or $B[\mathcal{I}t, t^{-1}]$. Then $S$-$dim(A) \leq max \{1, d \}$.
	\end{proposition}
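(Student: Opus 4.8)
The plan is to realize $A$ as a monoid algebra over $R$ and then exploit the grading coming from the Rees variable $t$, whose degree-zero component is the polynomial ring $B$. First I would reduce to $R$ reduced (since $nil(R)\subset nil(A)$) and discard the trivial filtration (if $I_n=0$ for $n\geq 1$ then $A=B$ and one finishes by Bhatwadekar--Roy). Because $I_0=B$ and every $I_n$ is generated by monomials in $X_1,\dots,X_m$, the Rees algebra $A=\bigoplus_{n\geq 0}I_nt^n$ is the $R$-span of the monomials $X^\alpha t^n$ with $X^\alpha\in I_n$; their exponent vectors form a submonoid $M\subset\pz{m+1}$ (the last coordinate recording the $t$-degree), so $A\simeq R[M]$. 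Noetherianity of $A$ makes $M$ affine, and $M\subset\pz{m+1}$ forces $U(M)=0$, i.e. $M$ is positive. Since $M\supset\pz{m}\times\{0\}$ (all $X$-monomials lie in $I_0$) and the filtration is nontrivial, $rank(M)=m+1$ and $dim(A)=d+m+1$.

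The key structural observation is that grading $A$ by the Rees variable $t$ (taking $t$ as the distinguished variable $t_1$) gives $A_0^1=R[\widehat{M_1}]$ with $\widehat{M_1}=M\cap\{t\text{-degree}=0\}=\pz{m}$, the free positive monoid of rank $m$; hence $A_0^1=R[X_1,\dots,X_m]=B$ is a polynomial ring. The non-constant hypothesis on the $I_n$ is exactly what guarantees $t^p\notin M$ for all $p$, keeping this degree-zero part equal to $B$. Note that applying Theorem \ref{mt1} directly to $R[M]$ only yields $\max\{1,dim(A)-1\}=\max\{1,d+m\}$; the sharper bound $\max\{1,d\}$ requires reducing all the way down to the base $B$, which has $S$-$dim(B)\leq\max\{1,d\}$ by Bhatwadekar--Roy (with Quillen--Suslin supplying the value $1$ when $d=0$). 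The plan is therefore to reduce $S$-$dim(A)$ to $S$-$dim(B)$ by proving that $Um(P)\rightarrow Um(P/A_+^1P)$ is surjective for every projective $P$ with $rank(P)>\max\{1,d\}$: the image $P/A_+^1P$ is then projective over $B$ of rank exceeding $S$-$dim(B)$, so it has a unimodular element, which lifts.

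To obtain this surjectivity I would run the argument of Theorems \ref{mmt1}--\ref{mmt2}: choose $p\in P$ with $ht(\mathcal{O}_P(p))$ large (using that $P_s$ is free for a suitable non-zerodivisor $s\in R$ and inducting on $d$), produce quasi-monics inside $\mathcal{O}_P(p)$ via Lemma \ref{l1}, and straighten each monomial generator $X^\alpha t^n$ of $M$ by an $R[t]$-automorphism of shearing type $X_i\mapsto X_i+(\text{monomial of }A)$ so that $\mathcal{O}_P(p)$ acquires a monic in every generator with coefficients in $A_0^1=B$. This makes $B/(\mathcal{O}_P(p)\cap B)\hookrightarrow A/\mathcal{O}_P(p)$ integral, and Lindel's Proposition \ref{pp2} then yields $q\in Um(P)$ with $q\equiv p$ modulo $A_+^1P$. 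Concretely this amounts to checking that $M$ lies, with respect to the $t$-grading, in a class $\mathfrak{M}_{m+1}$ deep enough to kill the full rank, after which Theorem \ref{mmt2} gives $S$-$dim(A)\leq dim(A)-(m+1)=d$, hence $\leq\max\{1,d\}$.

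The main obstacle is precisely this straightening step for the \emph{mixed} generators $X^\alpha t^n$ (those with $n\geq 1$ and $\alpha\neq 0$): the admissible automorphisms must fix $t$ and restrict to $A$, so a shear $X_i\mapsto X_i+g$ is constrained by the requirement $X^\alpha t^n\mapsto(\text{element of }A)$, and it is not a priori clear that a single automorphism makes all generators simultaneously monic in the prescribed sense. I expect that the abundance of $X$-monomials in $M$ (the entire free part $\pz{m}$) provides enough room to carry this out degree by degree, exactly as the explicit shears $\widetilde{\eta}$ are built in the Segre computation of Theorem \ref{mmt3}. Finally, the extended Rees algebra $B[\mathcal{I}t,t^{-1}]$ is treated the same way: its monoid sits in $\pz{m}\times\nz{}$, is still positive with the same degree-zero part $B$, and one reduces to the ordinary case either through the patching diagram of Proposition \ref{p2} or by localizing at $t^{-1}$.
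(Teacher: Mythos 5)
Your opening paragraph matches the paper's setup, but from there the paper's proof is far shorter than yours: it reduces to $R$ reduced, notes that $A \simeq R[M]$ with $M$ positive (since each $I_n$ is generated by monomials), passes to the seminormal case via Lemma \ref{pl1} and Theorem \ref{pt1} (legitimate because $rank(P) \geq 2$), and then simply cites Theorem \ref{mmt1} --- there is no reduction to $B$ whatsoever, no use of the $t$-grading, and no appeal to the classes $\mathfrak{M}_n$. You correctly computed that Theorem \ref{mmt1}, applied to $M \subset \pz{m+1}$ of rank $m+1$, yields only $S$-$dim(A) \leq \max\{1, d+m\} = \max\{1, dim(A)-1\}$: this is the bound one would expect from generalizing Rao--Sarwar with $B$ in place of $R$, and it is consistent with the corollary immediately following the proposition, which for $I = (X_1, \ldots, X_m)$ records only bounds of the form $dim(R) + \big[\frac{m+1}{2}\big]$. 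So your observation exposes a genuine discrepancy between the printed bound $\max\{1,d\}$ and what the paper's own four-line argument actually delivers; you should not assume the stronger bound is provable by the paper's methods.

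The trouble is that your proposed repair breaks down exactly at the step you flag. To invoke Theorem \ref{mmt2} with $A/A_+^1 \simeq B$ you must place $M$ in $\mathfrak{M}_{m+1}$ relative to $t$ as the distinguished variable. Since each $I_n$ is generated by non-constant monomials, no power of $t$ lies in $M$, so $M$ is not $\phi$-simplicial in the $t$-direction and the relaxed condition $(2')$ is unavailable: you must verify the full condition $(2)$, producing for \emph{every} mixed generator $X^{\alpha}t^n \in gen(M)_1$ an algebraically independent set of size $m+1$ and a single automorphism fixing $t$, preserving $R[M]$, and rendering arbitrary quasi-monics monic in each $U_i$ with coefficients in $B$. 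There is no ``room degree by degree'': a shear $X_i \mapsto X_i + g$ must send every $X^{\alpha}t^n$ back into $A$, forcing conditions of the form $g^k X^{\alpha'} t^n \in A$ that the fixed monomial ideals $I_n$ generally obstruct. Concretely, for the $(X_1, \ldots, X_m)$-adic filtration one has $A \simeq S_{2m}(R)$, and the best the paper achieves (Theorem \ref{mmt3}) is membership in $\mathfrak{M}_{k}$ with $k = \big[\frac{m+1}{2}\big]$, not $\mathfrak{M}_{m+1}$; your straightening, if it existed, would prove $S$-$dim(S_{2m}(R)) \leq \max\{1,d\}$, vastly exceeding the state of the art (even for $S_{22}$, Krishna--Sarwar needed $d=1$ and $\mathbb{Q} \subset R$ to get $S$-$dim \leq 1$). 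A secondary issue: for $B[\mathcal{I}t, t^{-1}]$ the exponent monoid sits in $\pz{m} \times \nz{}$, so the $A_+^1$-machinery as defined (which requires $M \subset \pz{}[t_1, \ldots, t_r]$ with the distinguished variable having nonnegative exponents) does not literally apply, and localizing at $t^{-1}$ does not recover the bound. As written, your argument rigorously establishes only $S$-$dim(A) \leq \max\{1, d+m\}$ --- the same conclusion the paper's actual proof supports --- and the passage from there to $\max\{1,d\}$ remains an unproven (and, in this generality, doubtful) assertion.
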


	\begin{proof}
		We may assume $A$ is reduced. Since $I_n$ consists of monomials in $R[X_1, \ldots,X_m]$, $A$ is a monoid algebra, say $R[M],$ where $M$ is a positive monoid. Let $P$ be a projective $A$-module of $rank > max\{1, d\}$. As $rank(P) \geq 2,$ we may further assume $R[M]$ to be seminormal using Lemma \ref{pl1}. By Theorem \ref{pt1}, $M$ is seminormal.  The conclusion thus follows from Theorem \ref{mmt1}.
	\end{proof} 
	
	What is of interest is$,$ if and when we can prove that the corresponding monoid $M \in \mathfrak{M}_n$ for $n>1$. We discuss one such example:
	
	\begin{corollary}
		Let $B = R[X_1, \ldots, X_m]$ and $I  = (X_1, \ldots, X_m)$. If $A = R[It],$ then
		\begin{linenomath*} 
			\[ 
			S\text{-}dim(A) \leq \left\{
			\begin{array}{ll}
			dim(R) + (m+1)/2 & m \text{ odd;} \\
			dim(R) + (m/2)+1 & \text{else.}\\
			\end{array} 
			\right. 
			\]
		\end{linenomath*}
	\end{corollary}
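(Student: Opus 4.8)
The plan is to identify $A=R[It]$ as a monoid algebra that is in fact a Segre extension, and then quote Theorem \ref{mmt3} directly. First I would note that since $I=(X_1,\ldots,X_m)$, the Rees algebra is
\[
A = R[It] = R[X_1,\ldots,X_m][X_1t,\ldots,X_mt] \subset R[X_1,\ldots,X_m,t],
\]
so $A = R[M]$ is the monoid algebra on the positive monoid
\[
M = \pz{}[X_1,\ldots,X_m,\, X_1t,\ldots,X_mt].
\]
The assignment $y_{1j}\mapsto X_j$, $y_{2j}\mapsto X_jt$ sends each defining $2\times 2$ minor $y_{1j}y_{2l}-y_{1l}y_{2j}$ to $X_jX_lt - X_lX_jt = 0$, and is a bijection on generators; hence it identifies $A$ with the Segre extension $S_{2m}(R)$ attached to a $2\times m$ matrix. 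Equivalently, writing $x_j = X_j$ for $1\le j\le m$ and $x_{m+1}=t$, the monoid $M = \pz{}[x_1,\ldots,x_m,\, x_ix_{m+1}\mid 1\le i\le m]$ is exactly the Segre monoid produced in the proof of Theorem \ref{mmt3} with side lengths $2$ and $m$.

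Next I would read off the numerical data. From $gp(M) = \mathbb{Z}\langle X_1,\ldots,X_m,t\rangle$ we get $rank(M) = m+1$, so $dim(A) = d + m + 1$. Applying Theorem \ref{mmt3} to $S_{2m}(R)$ (so that $min\{2,m\}=2$ once $m\ge 2$) gives $M\in\mathfrak{M}_k$ with
\[
k = k(2,m) = \big[\tfrac{2+m-1}{min\{2,m\}}\big] = \big[\tfrac{m+1}{2}\big],
\]
and therefore $S$-$dim(A) \le dim(A) - k = d + m + 1 - \big[\tfrac{m+1}{2}\big]$.

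Finally I would split into parities. When $m$ is odd, $\big[\tfrac{m+1}{2}\big] = \tfrac{m+1}{2}$, so the bound is $d + m + 1 - \tfrac{m+1}{2} = d + \tfrac{m+1}{2}$. When $m$ is even, $\big[\tfrac{m+1}{2}\big] = \tfrac{m}{2}$, so the bound is $d + m + 1 - \tfrac{m}{2} = d + \tfrac{m}{2} + 1$, which is precisely the claimed two-case estimate. The degenerate case $m=1$ can be disposed of separately: there $A = R[X_1,X_1t]$ is a polynomial ring in two variables, so $S$-$dim(A)\le d$ by \cite{Bhat-Roy-MR727374}, well within the stated bound $d+1$.

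The only substantive step is the identification of the Rees monoid with the Segre monoid of Theorem \ref{mmt3}; everything else is the one-line invocation of that theorem together with the elementary evaluation of $\big[\tfrac{m+1}{2}\big]$. I expect no genuine obstacle, the one subtlety being to confirm that $x_{m+1}=t \notin M$ (so that $M$ is the true Segre monoid rather than the free monoid $\pz{m+1}$), which is immediate because $t\cdot 1 \notin R[It]$ as $1\notin I$.
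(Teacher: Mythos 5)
Your proposal is correct and follows exactly the paper's route: the paper likewise observes $A \simeq S_{2m}(R)$, invokes Theorem \ref{mmt3} to get $M \in \mathfrak{M}_{k}$ with $k = \big[\frac{m+1}{2}\big]$ and $dim(A) = d+m+1$, and reads off the two parity cases. You merely make explicit what the paper compresses into ``observe'' — the isomorphism $y_{1j}\mapsto X_j$, $y_{2j}\mapsto X_jt$ matching the Segre monoid of Theorem \ref{mmt3}, plus the harmless $m=1$ edge case — all of which checks out.
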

	\begin{proof}
		Observe that $A \simeq S_{2m}(R) $. By Theorem \ref{mmt3}$,$ we have $A=R[M],$ where $M \in \mathfrak{M}_n$ and $n = \big[\frac{m+2-1}{2}\big]$. As $dim(A) = dim(R) + m + 1,$ the conclusion follows.
	\end{proof}
	
	\subsection{Monic Inversion}
	
	A ring $R$ is said to be normal if for every prime ideal $\mathfrak{p} \subset R,$ $R_{\mathfrak{p}}$ is a normal domain. Let $A = \bigoplus \limits_{i \geq 0}A_i$ be a positively graded ring and $P$ be a projective $A$-module. The Quillen ideal of $P,$ denoted by $J(A_0, P),$ is defined to be the set of elements $a \in A_0$ such that $P_a$ is extended from $(A_0)_a$. When $dim(A_0) \geq 1,$ it can be deduced from Theorem \ref{tp2} that $ht(J(A_0,P)) \geq 1$. Let $R$  be a $d$-dimensional normal ring and $P$ be a projective $R[T]$-module of rank $d$. Then by (\cite{BLR-MR796196}$,$ Theorem 5.2) the authors proved that the map $Um(P) \rightarrow Um(P/TP)$ is surjective$,$ if $Um(P_f) \neq \emptyset$ for some $f \in R[T]$ monic in $T$. Further$,$ utilizing the techniques of (\cite{Bhatwadekar-birational-MR978296}$,$ Lemma 3.2)$,$ the assumption of normality on $R$ can be relaxed. In (\cite{MKK-ZinnaMR3874658}$,$ Corollary 3.1) this was generalized to the ring $R[T_1, \ldots, T_n]$ to show surjection of $Um(P) \rightarrow Um(P/A_+^1P)$ subject to the existence of $f \in R[T_1, \ldots, T_n]$ monic in $T_1$ with $Um(P_f) \neq \emptyset$. We further generalize it to monoid algebras and give a proof for Theorem \ref{mt4} using the theorem below (\cite{BLR-MR796196}$,$ Criterion 1):
	
	\begin{theorem}\label{tp1}
		Let $A = \bigoplus \limits_{i \geq 0}A_i = A_0 \oplus A_+$ be a positively graded ring. Let $P$ be a projective $A$-module and $J = J(A_0,P)$. If $q \in P$ is such that $q_{1+A_+} \in Um(A_{1+A_+})$ and $q_{1+J} \in Um(A_{1+J}),$ then there exists a $p \in Um(P)$ such that $p \equiv q$ modulo $A_+P$.
	\end{theorem}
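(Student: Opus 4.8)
The plan is to reduce the two localization hypotheses to statements about the order ideal $O_P(q)$ and then glue the resulting data along a Zariski square built from a single comaximal pair in $A_0$. First I would record what the hypotheses give. The condition on $q_{1+A_+}$ says that $O_P(q)$ meets $1+A_+$, so $1+a\in O_P(q)$ for some $a\in A_+$; pushing forward along the reduction $A\twoheadrightarrow A_0$ (whose kernel is $A_+$, since $A=A_0\oplus A_+$) and using functoriality of order ideals under $P\twoheadrightarrow P_0:=P/A_+P$, the element $1+a$ maps to $1$, whence $\bar q:=q\bmod A_+P$ is unimodular in $P_0$ over $A_0$. Likewise the condition on $q_{1+J}$ gives $1+j\in O_P(q)$ for some $j\in J=J(A_0,P)$. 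Since $1=(1+j)-j$, the elements $j,1+j\in A_0$ are comaximal and $\{D(j),D(1+j)\}$ covers $\operatorname{Spec}A$.

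Second, I would analyze the two charts. Over $A_{1+j}$ the element $1+j$ becomes a unit, so $1+j\in O_P(q)$ forces $q\in Um(P_{1+j})$ with no further work. Over $A_{j}$ the key point is that $j\in J$ makes $P_j$ extended from $(A_0)_j$, by the very definition of the Quillen ideal; writing $P_j\cong A_j\otimes_{(A_0)_j}(P_0)_j$, the splitting $A_j\twoheadrightarrow(A_0)_j$ of the grading lets me lift $\bar q\in Um((P_0)_j)$ to the constant element $u_j:=1\otimes\bar q\in P_j$, which is unimodular and satisfies $u_j\equiv q\bmod(A_+)_jP_j$ because both reduce to $\bar q$. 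Thus I have unimodular representatives on both charts, each congruent to $q$ modulo $A_+$.

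Finally I would patch. Both $u_j$ and $q$ are unimodular over the overlap $A_{j(1+j)}$ and both reduce to $\bar q$ there; moreover $P$ is extended over $A_{j(1+j)}$, a degree-zero localization of the extended module $P_j$, so it is positively graded with degree-zero part $(A_0)_{j(1+j)}$. The plan is to produce $\sigma\in Aut(P_{j(1+j)})$ with $\sigma\equiv\mathrm{id}\bmod A_+$ carrying $q$ to $u_j$, then split it as $\sigma=\sigma_{1+j}\circ\sigma_{j}$ by Quillen's splitting lemma (\cite{Quillen-MR427303}, Lemma 1 and Theorem 1), exactly as in the decomposition used in Proposition~\ref{p2}(1); gluing $\sigma_j^{-1}(u_j)$ over $A_j$ to $\sigma_{1+j}(q)$ over $A_{1+j}$ yields a global $p\in Um(P)$, and since both splitting factors are trivial modulo $A_+$ one gets $\bar p=\bar q$, i.e. $p\equiv q\bmod A_+P$, as required.

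The hard part will be the existence of the connecting automorphism $\sigma$ over the overlap: I must show that two unimodular elements of an extended projective module over a positively graded ring which coincide modulo the irrelevant ideal lie in a single orbit of $\{\sigma:\sigma\equiv\mathrm{id}\bmod A_+\}$. This is the Plumstead--Bhatwadekar--Roy-type transitivity underlying \cite{BLR-MR796196}, proved by interpolating along the grading (the homotopy joining $q$ to its constant part) and realizing the motion by elementary transvections, and it is precisely what makes the Quillen splitting applicable. I would also use freely the standard facts invoked above, that $J(A_0,P)$ is an ideal and that $P_a$ is extended for every $a\in J$ (Quillen patching), together with the degenerate case in which $A_0$ is a field, where everything is free (cf. Theorem~\ref{tp2}).
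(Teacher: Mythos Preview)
The paper does not prove Theorem~\ref{tp1}; it is quoted verbatim as Criterion~1 of \cite{BLR-MR796196} and used as a black box in the proof of Theorem~\ref{mmt4}. So there is no in-paper argument to compare your proposal against.

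That said, your outline is essentially the standard patching proof one finds in \cite{BLR-MR796196} and \cite{Plumstead-MR722004}: reduce to a comaximal pair $j,1+j$ in $A_0$ coming from the two localization hypotheses, use the definition of the Quillen ideal to get $P_j$ extended and hence a constant unimodular lift of $\bar q$ over $A_j$, and patch against $q$ over $A_{1+j}$. One small correction: the hypothesis $q_{1+A_+}\in Um(P_{1+A_+})$ does not by itself force $\bar q\in Um(P_0)$ in the way you state, since $O_{P_0}(\bar q)$ is only the image of $O_P(q)$ in $A_0$ when $P$ is extended; what it does give is that $q$ is unimodular over the local ring $A_{1+A_+}$, and that is exactly what you need on the $(1+j)$-chart anyway. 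More importantly, you have correctly isolated the genuine content: over the overlap $A_{j(1+j)}$ you need an automorphism $\sigma\equiv\mathrm{id}\bmod A_+$ carrying $q$ to the constant lift, and then a graded refinement of Quillen's splitting lemma so that the factors $\sigma_j,\sigma_{1+j}$ are themselves $\equiv\mathrm{id}\bmod A_+$. This is precisely what Plumstead's homotopy/dilation argument (and its graded extension in \cite{BLR-MR796196}) provides, and it does not follow from the ungraded splitting used in Proposition~\ref{p2}(1) alone. With that ingredient supplied, your argument goes through.
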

	
	\begin{theorem}\label{mmt4}
		Let R be a normal ring of dimension $d,$ $M \in \mathfrak{M}_n$ a normal $\phi$-simplicial monoid of rank $r > 0$ and $A=R[M]$. Let $P$ be a projective $A$-module of rank $ dim(A) - n$ and $J=J(R,P)$ be the Quillen ideal of $P$. Assume
		\begin{enumerate}
			\item  $Um(P_f) \neq \emptyset$ for some $f \in R[M]$ monic in $t_1$;
			\item When $n>1,$ $M \in \mathfrak{M}_n$ is such that the automorphism $\widetilde{\eta}$ obtained has the form $\widetilde{\eta}(t_i) \in  t_i + M_1$ for $i > 1$.
		\end{enumerate}
		Then the map $Um(P) \rightarrow Um(P/A_+^1P)$ is surjective.
	\end{theorem}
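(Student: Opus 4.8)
The plan is to reduce the monic-inversion hypothesis to the verification of the two unimodularity conditions of Theorem \ref{tp1}, namely $q_{1+A_+} \in Um(A_{1+A_+})$ and $q_{1+J} \in Um(A_{1+J})$, for a suitably chosen lift $q$ of a given element of $Um(P/A_+^1 P)$. First I would fix $\bar p \in Um(P/A_+^1 P)$ and lift it arbitrarily to some $q_0 \in P$; the goal is to modify $q_0$ within its coset modulo $A_+^1 P$ so that the resulting $q$ meets both criteria. Since $A_0^1 = R[M_0^1]$ where $M_0^1 = \widehat{M_1}$ is a normal $\phi$-simplicial monoid of rank $r-1$ lying in $\mathfrak{M}_{n-1}$, and $R$ is normal, the base ring of the grading is again a normal monoid algebra, so the inductive machinery applies at the next level. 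The first condition, $q_{1+A_+^1} \in Um$, is the easier one: after localizing at $1 + A_+^1$ the irrelevant ideal becomes nilpotent-like in effect and unimodularity of $\bar q$ over $A/A_+^1 = A_0^1$ lifts, using that $Um(P/A_+^1 P)$ is nonempty by hypothesis together with the Quillen-type patching already exploited in Proposition \ref{p2} and Theorem \ref{mmt2}.

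The heart of the argument is producing, from the monic $f$ with $Um(P_f)\neq\emptyset$, a unimodular element after inverting an element of the Quillen ideal $J = J(R,P)$, i.e.\ verifying $q_{1+J} \in Um(A_{1+J})$. Here I would follow the blueprint of (\cite{BLR-MR796196}, Theorem 5.2) and its birational refinement via (\cite{Bhatwadekar-birational-MR978296}, Lemma 3.2), which is precisely why the hypotheses impose normality on both $R$ and $M$. Over the localization $R_{1+J}$, the module $P$ becomes extended from $R_{1+J}$ by definition of the Quillen ideal, so one is reduced to a statement about a polynomial-type extension in the graded variable $t_1$; the monic $f$ in $t_1$ with $Um(P_f)\neq\emptyset$ is exactly the input the monic-inversion theorem needs to conclude $Um$ over $A_{1+J}$. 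The normality of $M$ enters through Remark \ref{r1}, which upgrades the monic coefficients from $R[M_0^1]$ to $R[M]$ and underlies the applicability of (\cite{Gubeladze-umodrow1-MR1161570}, Lemma 6.3).

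When $n > 1$ the induction on $n$ is where the second hypothesis does its work. Applying the defining automorphism $\widetilde\eta$ of $\mathfrak{M}_n$ to move the monic into standard position, the constraint $\widetilde\eta(t_i) \in t_i + M_1$ for $i > 1$ guarantees that $\widetilde\eta$ not only fixes $t_1$ but also restricts to an automorphism of the graded piece $A_0^1 = R[M_0^1]$ compatibly with the filtration, so that the inductive hypothesis for $M_0^1 \in \mathfrak{M}_{n-1}$ can be invoked on the restricted module $P/A_+^1 P$ over $A_0^1$. Concretely I would run the induction by first establishing surjectivity of $Um(P) \to Um(P/A_+^1 P)$ at level $n=1$ (which is the classical monic-inversion statement of \cite{BLR-MR796196} adapted to $R[M]$ for $M$ normal $\phi$-simplicial of rank $r$), and then, assuming the result for $\mathfrak{M}_{n-1}$, transport a lift through $\widetilde\eta$, apply the $n=1$ surjection in the $t_1$-direction using hypothesis (1), and descend via Theorem \ref{tp1}. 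The main obstacle I anticipate is the compatibility bookkeeping in the inductive step: one must ensure that inverting $f$ (monic in $t_1$) and localizing at $1+J$ interact correctly with the coefficient ring $A_0^1$ changing under $\widetilde\eta$, and that the Quillen ideal behaves well under the restriction to $A_0^1$ — this is exactly the technical role played by the second hypothesis, and verifying that $\widetilde\eta(t_i)\in t_i + M_1$ suffices to keep the monic structure and the grading aligned simultaneously is the delicate point.
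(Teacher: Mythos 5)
There is a genuine gap, located in both of the mechanisms you rely on. First, your verification of $q_{1+J}\in Um(P_{1+J})$ rests on the claim that over $R_{1+J}$ the module $P$ ``becomes extended from $R_{1+J}$ by definition of the Quillen ideal.'' That is not the definition, and it is false in general: $a\in J(R,P)$ means $P_a$ is extended from $R_a$, which controls $P_{\mathfrak{p}}$ only for primes $\mathfrak{p}\not\supseteq J$; maximal ideals of $R_{1+J}$ containing $J$ survive the localization, so no extendedness over $R_{1+J}$ follows, and with it your reduction to a polynomial-type monic-inversion statement collapses. Second, you call $q_{1+A_+^1}\in Um(P_{1+A_+^1})$ ``the easier one,'' to be obtained because unimodularity over $A/A_+^1$ ``lifts''; but passing from $Um(P/A_+^1P)$ to $Um(P_{1+A_+^1})$ is exactly the nontrivial content of the problem and does not follow from Quillen-type patching. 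Relatedly, your proposed induction on $n$ with base case $n=1$ being ``the classical monic-inversion statement of \cite{BLR-MR796196} adapted to $R[M]$'' is unavailable: for monoid algebras that adaptation \emph{is} the theorem being proved, and the paper never inducts on $n$ here; the only descent is in $d$, via the Quillen ideal.

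The paper's actual proof produces both hypotheses of Theorem \ref{tp1} from one explicit construction that your sketch never supplies. Normality of $R$ is used not for a birational refinement but to settle $d=1$ (where $P$ is extended by Swan plus graded Quillen local-global, so the monic is not even needed) and, for $d>1$, to prove $ht(J)\geq 2$, so that $\dim(R/J)\leq d-2$. One then applies Theorem \ref{t6} over $R/J$ (with $M\in\mathfrak{M}_n$ intact, rank$(\widetilde{P})=d+r-n>(d-2)+r-n$, where $\widetilde{\phantom{x}}$ denotes reduction mod $JA$) to patch a given $p_2\in Um(P/A_+^1P)$ with some $p_1\in Um(\widetilde{P})$, decomposes $\widetilde{P}=\widetilde{A}p_1\oplus Q$, raises $ht(O_Q(q'))$ by Eisenbud--Evans, and manufactures a monic in $t_1$ inside $O_Q(q')$ via Lemma \ref{l1} and the $\mathfrak{M}_n$-automorphism; hypothesis (2) is what allows the automorphism $\widetilde{\eta}$ constructed over $R/J$ to lift to $R$, and the given monic $f\in O_P(q)$ disposes of the minimal primes containing $\tilde{a}$. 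Setting $p_0=p+t_1^Ngq$ for large $N$, the monic $h\in O_P(p_0)$ together with (\cite{Gubeladze-umodrow1-MR1161570}, Lemma 6.3) yields integrality over $R[M_0^1]$, which converts $\widetilde{p_0}\in Um(\widetilde{P})$ into comaximality of $O_P(p_0)\cap R[M_0^1]$ with $JR[M_0^1]$, giving unimodularity after localizing at $1+JR[M_0^1]\subset 1+J(R[M_0^1],P)$, while the same monic gives $(p_0)_{1+A_+^1}\in Um(P_{1+A_+^1})$; only then does Theorem \ref{tp1} apply. Without the element $p_0$, the height-of-$J$ argument, and the integrality step, neither condition of Theorem \ref{tp1} is within reach of your outline.
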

	
	\begin{proof}
		We may assume $R$ to have a connected spectrum. Let $J=J(R,P) \subset R$. If $d=0,$ then $R$ is a field and by Theorem \ref{tp2}$,$ both $P$ and $P/A_+^1P$ are free. As $P$ is extended from $R[M_0^1]$ and $R[M] \rightarrow R[M_0^1]$ is a retraction$,$ we have the required surjection.  Let $d \geq 1$. Let ``$\sim$'' denote reduction modulo $JA$ and ``$-$'' denote reduction modulo $(J,A_+^1)$.
		
		Case 1: If $d=1,$ then $R$ is a regular ring. Let $\mathfrak{p} \in Spec(R)$. As $M$ is positive$,$ by (\cite{Swan-MR1144038}$,$ Theorem 1.2)$,$ $P_{\mathfrak{p}}$ is extended from $R_{\mathfrak{p}}$. By the graded version of Quillen's local-global principle  (\cite{Gubeladze1-MR2508056}$,$ Theorem 8.11)$,$ $P$ is extended from $A/A_+$ and hence from $A/A_+^1$. Since $R[M] \rightarrow R[M_0^1]$ is a retraction$,$ the map  $Um(P) \rightarrow Um(P/A_+^1P)$ is surjective.  
		
		Case 2: Let $d>1$. Next we want to prove $ht(J) \geq 2$. Let if possible $\mathfrak{p} \in Spec(R)$ be a height $1$ minimal prime of $J$. Then $R_{\mathfrak{p}}$ is a PID and by Theorem \ref{tp2}$,$ $P_{\mathfrak{p}}$ is free. This would imply the existence of $s \in  J \cap (R \smallsetminus \mathfrak{p}),$ a contradiction. Therefore $ht(J) \geq 2$. Let $p_2 \in Um(P/A_+^1P)$. As $M_0^1 \in \mathfrak{M}_{n-1},$ $dim(R/J) \leq d-2$ and $rank(\widetilde{P})=d+r-n > (d-2) + r - n,$ by  Theorem \ref{t6}$,$ the map $Um(\widetilde{P}) \rightarrow Um(\bar{P})$ is onto. Choose $p_1 \in Um(\widetilde{P})$ such that $\pi_1(p_1) = \pi_2(p_2)$. Consider the following patching diagram:
		
		\begin{figure}[H]
			\tikzset{column sep=small, row sep=small, ampersand replacement=\&}
			\centering
			\begin{floatrow}
				\ffigbox{\begin{tikzcd}
						A/JA_+^1A  \arrow[rr,"\theta_1"] \arrow[dd,"\theta_2"] \&\&
						A/JA  \simeq (R/JR)[M] \arrow[dd,"\pi_1"] \\
						\&  \\
						A/A_+^1 \simeq R[M_0^1] \arrow[rr, "\pi_2"] \&\& A/(J, A_+^1)A \simeq (R/JR)[M_0^1]. 
				\end{tikzcd}}{}
			\end{floatrow}
		\end{figure} 
		
		Then there exist $p' \in Um(P/JA_+^1P)$ such that $\theta_1(p') = p_1 \in Um(\widetilde{P})$ and $\theta_2(p') = p_2 \in Um(P/A_+^1P)$. Using $p_1 \in Um(\tilde{P})$ we may decompose $\widetilde{P} = \widetilde{A}p_1 \oplus Q$. Let $q \in P$ be such that $f \in O_P(q)$. Using the decomposition above write $\tilde{q} = (\tilde{a}p_1,q')$. As a consequence of Eisenbud-Evans in \cite{Plumstead-MR722004} there exists a transvection $\tilde{\tau} \in Aut(\tilde{P})$ such that $\tilde{\tau}(\tilde{q}) = (\tilde{a}p_1,q'')$ and $ht_{\tilde{A}_{\tilde{a}}}(O_Q(q{''})) \geq rank(P) - 1 = d - 1 + r - n$. From (\cite{Bhat-Roy-MR727374}$,$ Proposition 4.1) we may lift $\tilde{\tau}$ to $\tau \in Aut(P)$. On replacing $P$ by $\tau(P),$ we may assume $ht_{\tilde{A}_{\tilde{a}}}(O_Q(q')) \geq rank(P)-1 = d-1+r-n$. 
		
		\textit{Claim}: $O_Q(q')$ contains a monic in 
		$t_1$. 
		
		Let $\{\mathfrak{p}_1, \ldots, \mathfrak{p}_s\}$ be minimal primes of $O_Q(q')$ not containing $\tilde{a}$. Then $ht(\cap \mathfrak{p}_i) \geq ht_{\tilde{A}_{\tilde{a}}}(O_Q(q')) \geq rank(P)-1 = d-1 + r - n$. 	Let $M = \pz{}[W_1, \ldots, W_l] \subset \pz{}[t_1, \ldots, t_r]$. As $M$ is $\phi$-simplicial$,$ the first $r$ elements can be chosen to be $t_i^{s_i}$ for some $s_i \in \mathbb{Z}_{>0}$. Consider the composition of maps
		\begin{linenomath*}
			\[(R/JR)[X_{j_1}, \ldots, X_{j_n}] \xhookrightarrow{  \hspace{0.2cm} i_j  \hspace{0.2cm} } (R/JR)[X_1, \ldots, X_l] \xtwoheadrightarrow{\beta} (R/JR)[M] \simeq \tilde{A},\]
		\end{linenomath*}
		where $\beta(X_i)=W_i$ for all $i$. 
		If $n=1,$ then $\cap \mathfrak{p}_i$ contains monic in $t_1$ with coefficients in $R/J$. Therefore all minimal primes of $O_Q(q')$ and hence $O_Q(q')$ contains a monic in $t_1$ with coefficients in $R/J$.
		
		Let $n>1$. As $dim(R/JR) \leq d-2,$ by Lemma \ref{l1} there exists quasi-monic $g \in \cap \mathfrak{p}_i \cap (R/JR)[t_1^{p_1}, \ldots, t_n^{p_n}]$. Since $M \in \mathfrak{M}_n,$ there exists an $(R/JR)[t_1]$-automorphism $\tilde{\eta}$ such that $\tilde{\eta}(g) \in \eta(\cap  \mathfrak{p}_i)$ is monic in $t_1$ with coefficients in $(R/JR)[M_0^1]$. By (2)$,$ we may lift $\tilde{\eta}$ to $\eta \in Aut_{R[t_1]}(R[M])$ and replace $A$ by $\eta(A)$. If $\mathfrak{p}$ is a minimal prime ideal of $O_Q(q')$ containing $\tilde{a},$ then $O_{\tilde{P}}(\tilde{q}) \subset \mathfrak{p}$. As $\eta$ preserves monic in $t_1,$ $\eta(f)$ is again monic in $t_1$. Therefore all minimal primes of $O_Q(q')$ and hence $O_Q(q')$ contains a monic in $t_1$ with coefficients in $(R/J)[M_0^1]$ say $\tilde{g}$. Choose $g \in A$ to  be a monic lift of $\tilde{g}$.
		
		Let $p \in P$ be a lift of $p' \in P/JA_+^1P$. In the final step we shift $p$ to $p_0 = p + t_1^Ngq$ for a well chosen $N$ and show that $(p_0)_{1+A_+^1} \in Um(P_{1+A_+^1})$ and $(p_0)_{1+J(R[M_0^1],P)} \in Um(P_{1+J(R[M_0^1],P)})$. We arrive at our desired conclusion by invoking Theorem \ref{tp1}.
		
		We can choose $N$ large enough so that $O_p(p_0)$ contains a monic in $t_1,$ say $h$.  As $\widetilde{p'}= p_1$ we get $\tilde{q} = (\tilde{a}\tilde{p}, q')$. Thus $\widetilde{p_0} = ((1+t_1^N\tilde{g}\tilde{a})\widetilde{p'}, t_1^N\tilde{g}^Nq')$. Since $O_Q(q')$ contains $\tilde{g},$ we have $O_{\widetilde{P}}(p_1) \subset O_{\widetilde{P}}(\tilde{p_0})$. Which in turn implies $\widetilde{p_0} \in Um(\widetilde{P})$ and thus $(p_0)_{1+AJ} \in Um(P_{1+AJ})$. By (\cite{Gubeladze-umodrow1-MR1161570}$,$ Lemma 6.3) the extension $R[M_0^1] \rightarrow R[M_0^1]/(h)$ is integral and thus $(R/J)[M_0^1]/(O_P(p_0) \cap R[M_0^1]) \rightarrow (R/J)[M]/O_P(p_0)$ is integral. As $O_P(p_0)$ and $JA$ are comaximal in $A,$ we have $O_P(p_0) \cap R[M_0]$ and $JR[M_0^1]$ are comaximal in $R[M_0^1]$. Therefore $p_0 \in Um(P_{1+JR[M_0]}) \subset Um(P_{1+J(R[M_0^1],P)})$ (Note that $JR[M_0^1] \subset J(R[M_0],P)$). 
		
		As $(p_0)_{1+A_+^1} \in Um(P_{1+A_+^1}),$ by Theorem \ref{tp1}$,$ there exists $p_3 \in Um(P)$ such that $p_3 \equiv p_0$ modulo $A_+^1P$. As $p_0 \equiv p_2$ modulo $A_+^1P,$ the surjection follows. 
	\end{proof}
	
	Note that in all of the examples discussed before$,$ the automorphisms occurring out of $M$ being in $\mathfrak{M}_n$ satisfy the condition (2) above. The above theorem results in a slew of corollaries.
	
	\begin{corollary}
		Let R be a normal ring of dimension $d,$ $M \in C(\phi)$ a normal monoid of rank $r > 0$ and $A=R[M]$. Assume $P$ to be a projective $A$-module of rank $ d$. If $Um(P_f) \neq \emptyset$ for some $f \in R[M]$ monic in $t_1,$ then the map $Um(P) \rightarrow Um(P/A_+^1P)$ is surjective. 
	\end{corollary}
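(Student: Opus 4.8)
The plan is to deduce this directly from Theorem \ref{mmt4} with the choice $n=r$. The first step is to recall the observation, recorded in the Example at the end of Section 4, that every $M \in \mathcal{C}(\phi)$ of rank $r$ already lies in $\mathfrak{M}_r$ (this rests on (\cite{MKK&HPS-MR3647151}, Corollary 3.6) for the rank-$2$ building block and the membership argument for $\mathcal{C}(\phi)$ in general). Next I would check that the numerical hypotheses align exactly: since $dim(A) = d + r$, taking $n = r$ gives $dim(A) - n = d = rank(P)$, so $P$ is precisely a projective $A$-module of rank $dim(A)-n$, as Theorem \ref{mmt4} demands.

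It then remains to verify the two numbered hypotheses of Theorem \ref{mmt4}. Hypothesis (1)---the existence of $f \in R[M]$ monic in $t_1$ with $Um(P_f) \neq \emptyset$---is exactly the standing assumption of the corollary, so nothing is needed there. Normality of $R$ and of $M$ are given by hypothesis, and $\phi$-simpliciality of $M$ is built into the definition of the class $\mathcal{C}(\phi)$; thus all the structural requirements of Theorem \ref{mmt4} are in place.

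The only genuine point to verify---and the step I expect to be the main obstacle---is hypothesis (2): when $r>1$, the automorphism $\widetilde{\eta} \in Aut_{R[t_1]}(R[t_1, \ldots, t_r])$ witnessing $M \in \mathfrak{M}_r$ must have the triangular form $\widetilde{\eta}(t_i) \in t_i + M_1$ for $i>1$, where $M_1$ is the set of elements of $M$ carrying a positive power of $t_1$. Here I would unwind the construction of $\widetilde{\eta}$ for monoids in $\mathcal{C}(\phi)$: as in Lemma \ref{l3} and in the Segre computation of Theorem \ref{mmt3}, the witnessing automorphism is assembled from substitutions of the shape $t_i \mapsto t_i + (\text{a monomial of } M \text{ divisible by a positive power of } t_1)$, and each such correction term is by definition an element of $M_1$. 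This is precisely the content of the remark immediately following Theorem \ref{mmt4}, namely that the automorphisms arising in all of our examples satisfy condition (2). With (1) and (2) both confirmed, Theorem \ref{mmt4} applies verbatim and yields the surjectivity of $Um(P) \rightarrow Um(P/A_+^1P)$, completing the argument.
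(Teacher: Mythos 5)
Your proposal is correct and follows essentially the same route as the paper: both deduce the corollary from Theorem \ref{mmt4} with $n=r$ using the membership $\mathcal{C}(\phi) \subset \mathfrak{M}_r$ (Example, Section 4), the numerical match $\mathrm{rank}(P) = d = \dim(A) - r$, and the observation that the witnessing automorphism for $\mathcal{C}(\phi)$ has the defining triangular form $\widetilde{\eta}(t_i) = t_i + t_1^{c_i}$ with $t_1^{c_i} \in M_1$, so hypothesis (2) holds. The paper's proof is just a terser version of your verification of condition (2), writing down the explicit substitutions rather than unwinding Lemma \ref{l3}.
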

	
	\begin{proof}
		Since $M \in C(\phi) \subset \mathfrak{M}_r,$  by definition, for $2 \leq i \leq r,$ $\exists$ $c_i \in \mathbb{N}$ and $\widetilde{\eta} \in Aut_{R[t_1]}(R[t_1, \ldots, t_r])$ given by
		\begin{linenomath*}
			\[
			\widetilde{\eta}(t_i) \mapsto  t_i + t_1^{c_i}.
			\]
		\end{linenomath*}
		$M$ clearly satisfies the second condition of the hypothesis of Theorem \ref{mmt4} and thus the result follows.
	\end{proof}
	
	\begin{corollary}\label{c5}
		Let $R$ be a normal ring of dimension d$,$ $M \in \mathcal{C}(\phi)$ be a normal monoid of rank $r > 0$ and $A=R[M]$. Assume $P$ to be a projective $A$-module of rank d. If for each $i,$ there exists $f_i \in R[M]$ monic in $t_i$ such that $Um(P_{f_i}) \neq \emptyset,$
		then the map $Um(P) \rightarrow Um(P/A_+P)$ is surjective.
	\end{corollary}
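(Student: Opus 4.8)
The plan is to establish the statement by induction on the rank $r$ of $M$, stripping off one variable at each stage by means of the preceding corollary. The guiding observation is that the map in question factors through the successive reductions that annihilate the positive part of one variable at a time: reducing $A = R[M]$ modulo $A_+^1$ produces $R[M_0^1]$, and iterating one variable at a time, the composite of all $r$ such reductions is exactly $A \to A/A_+ = R$ carrying $P$ to $P/A_+P$. Since a composite of surjections is surjective, it is enough to treat a single reduction and recurse.

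For the inductive step I would apply the preceding corollary to $A = R[M]$, using the hypothesis that $f_1$ is monic in $t_1$ with $Um(P_{f_1}) \neq \emptyset$; this gives that $Um(P) \to Um(P/A_+^1P)$ is surjective. Write $A' = R[M_0^1]$ and $P' = P/A_+^1P$, a projective $A'$-module of rank $d = \dim R$. When $r = 1$ one has $M = \pz{}$ and $A_+^1 = A_+$, so this single application already yields the result (resting ultimately on (\cite{BLR-MR796196}, Theorem 5.2)); when $r \geq 2$ the monoid $M_0^1$ has rank $r-1$ and I would invoke the inductive hypothesis on $(A', P')$. Because reducing modulo $A_+^1$ and then modulo the irrelevant ideal $A'_+$ of $A'$ coincides with reducing modulo $A_+$, one has $P'/A'_+P' = P/A_+P$, so composing the two surjections completes the step.

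Carrying out the recursion requires checking that $(A', P')$ satisfies all the hypotheses. First, $M_0^1 = \widehat{M_1}$ is a face of the normal $\phi$-simplicial monoid $M$, hence itself normal and $\phi$-simplicial of rank $r-1$, and $R$ is unchanged and normal, so $A'$ is a normal monoid algebra of the required type; the decisive point is that $M_0^1$ again belongs to $\mathcal{C}(\phi)$. Second, for each $i$ with $2 \le i \le r$ the image $\bar f_i$ of $f_i$ in $A'$ remains monic in $t_i$: its leading term $t_i^{k}$ involves no $t_1$ and so survives the reduction modulo $A_+^1$, while the lower-order coefficients merely reduce. Third, the localized surjection of rings $A_{f_i} \twoheadrightarrow A'_{\bar f_i}$ sends a unimodular element of $P_{f_i}$ to a unimodular element of $P'_{\bar f_i}$, whence $Um(P'_{\bar f_i}) \neq \emptyset$. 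These three facts reproduce, for $(A',P')$, precisely the hypotheses of the corollary with $t_2$ now playing the role of the leading variable, so the induction closes.

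The step I expect to be the genuine obstacle is the closure property $M \in \mathcal{C}(\phi) \Rightarrow M_0^1 \in \mathcal{C}(\phi)$, that is, the stability of the class $\mathcal{C}(\phi)$ under passage to the facet submonoid $\widehat{M_1}$; this is what keeps the automorphisms of condition (2) of Theorem \ref{mmt4} available at every level of the recursion, and it is the fact I would need to read off from the definition of $\mathcal{C}(\phi)$ in \cite{MKK&HPS-MR3647151} (at the level of the classes $\mathfrak{M}_n$ it is literally condition (3), namely $\widehat{M_1} \in \mathfrak{M}_{n-1}$). The bookkeeping of the descent of the monics $\bar f_i$ and of the unimodular localizations is routine by comparison.
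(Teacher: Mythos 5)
Your proposal is correct and follows essentially the same route as the paper: the paper's proof is exactly this induction on $rank(M)$, applying the preceding corollary to get surjectivity of $Um(P) \rightarrow Um(P/A_+^1P)$ and then invoking the inductive hypothesis on $R[M_0^1]$ with $\bar{P} = P/A_+^1P$, composing the two surjections via $\bar{P}/(R[M_0^1])_+\bar{P} \simeq P/A_+P$. The descent checks you spell out (stability of $\mathcal{C}(\phi)$ under passage to $M_0^1$, the monics $\bar{f_i}$ surviving reduction, and unimodularity descending to $P'_{\bar{f_i}}$) are left implicit in the paper's three-line proof, so your version is simply a more detailed account of the same argument.
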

	
	\begin{proof}
		This follows from the above corollary by inducting on the rank of the monoid. If $r=0,$ then $M=0$ and we are done. Let $r>0$ and $-$ denote reduction modulo $A_+^1$. By induction on $R[M_0^1],$ we get 
		\begin{linenomath*}
			\[Um(\bar{P}) \rightarrow Um(\bar{P}/(R[M_0^1])_+\bar{P}) \simeq Um(P/A_+P)\]
		\end{linenomath*}
		is surjective.  The previous corollary gives $Um(P) \rightarrow Um(\bar{P})$ is surjective. This proves the required.
	\end{proof}
	
	The following was proved in (\cite{BLR-MR796196}$,$ Theorem 5.1) when $M$ is free:
	
	\begin{corollary}\label{e1}
		Let $R$ be a normal ring of dimension $d$. Let $ B$ be a birational overring of $R[X]$ and $M$ be a normal monoid of rank $2$. Then $S$-$dim(B[M]) \leq d$.
	\end{corollary}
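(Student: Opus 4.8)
The plan is to isolate the one genuinely new case and reduce it, through the monic inversion result Theorem \ref{mmt4}, to the free-monoid statement of \cite{BLR-MR796196}. Since $B$ is birational over $R[X]$ we have $dim(B)=dim(R[X])=d+1$, so $A:=B[M]$ has $dim(A)=d+3$. I may assume $R$ is a normal domain with connected spectrum. If $M$ is not positive, then $rank(U(M))\geq 1$ and, $M$ being normal of rank $2$, it splits off its units as a free monoid (it is isomorphic to $\nz{2}$ or $\nz{}\oplus\pz{}$), in which case the assertion is exactly \cite{BLR-MR796196}. Hence I reduce to $M$ a positive normal monoid of rank $2$. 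Such an $M$ is $\phi$-simplicial, since its cone is two dimensional, and it lies in $\mathfrak{M}_2$ by (\cite{MKK&HPS-MR3647151}, Corollary 3.6). Now let $P$ be projective over $A$ of $rank\geq d+1$. For $rank(P)\geq d+2$ a unimodular element exists already by Theorem \ref{mmt2} applied with base $B$ and $n=2$, which gives $S$-$dim(A)\leq dim(A)-2=d+1$. Thus the only case left is the critical rank $rank(P)=d+1=dim(A)-2$, which is precisely the numerology to which Theorem \ref{mmt4} applies with $n=2$.

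I would then verify the two hypotheses of Theorem \ref{mmt4}, read in the form in which the normality of the base is relaxed to $B$ being a birational overring of the normal ring $R[X]$, as anticipated in the discussion preceding Theorem \ref{tp1} via the conductor technique of \cite{Bhatwadekar-birational-MR978296}. Hypothesis (2) is immediate: for a normal monoid of rank $2$ the automorphism furnished by $M\in\mathfrak{M}_2$ has the form $\widetilde{\eta}(t_2)=t_2+t_1^{c}$ (exactly as in the corollary to Theorem \ref{mmt4}), and $t_1^{c}\in M_1$, so the required shape $\widetilde{\eta}(t_i)\in t_i+M_1$ holds.

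The substantive step is hypothesis (1), the existence of $f$ monic in $t_1$ with $Um(P_f)\neq\emptyset$. I take $f=t_1^{p_1}$ with $p_1$ chosen so that $t_1^{p_1}\in M$; this is monic in $t_1$ with unit leading coefficient. The point is that inverting $t_1$ localizes the two dimensional cone of $M$ along one of its rays, so that $M[t_1^{-1}]$ is normal of rank $2$ with a rank-one unit group, hence free, $M[t_1^{-1}]\simeq\nz{}\oplus\pz{}$. Therefore
\[A_f=B[M][t_1^{-1}]=B\big[M[t_1^{-1}]\big]\simeq B[u^{\pm 1},v],\]
a free monoid algebra over the birational overring $B$. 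By the free case \cite{BLR-MR796196} we get $S$-$dim(B[u^{\pm 1},v])\leq d$, and since $rank(P_f)=d+1>d$ this yields $Um(P_f)\neq\emptyset$, verifying (1).

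With both hypotheses in place, Theorem \ref{mmt4} makes $Um(P)\rightarrow Um(P/A_+^1P)$ surjective, so it suffices to split $P/A_+^1P$, a projective module of rank $d+1$ over $A_0^1=B[\widehat{M}_1]$. As $M$ is positive normal of rank $2$, $\widehat{M}_1$ is positive normal of rank $1$, so $\widehat{M}_1\simeq\pz{}$ and $A_0^1\simeq B[Y]$; by \cite{BLR-MR796196} again (free rank-one monoid over the birational overring $B$), $S$-$dim(B[Y])\leq d$, whence $Um(P/A_+^1P)\neq\emptyset$ and, lifting, $Um(P)\neq\emptyset$. The main obstacle I foresee is not the monoid bookkeeping but this relaxation of Theorem \ref{mmt4} to a non-normal base: its proof uses normality to control the Quillen ideal $J(R,P)$ and to run the patching in the $JA$-chart, and one must replace that input by the conductor argument of \cite{Bhatwadekar-birational-MR978296}, producing a monic $g\in R[X]$ in $(R[X]:_{R[X]}B)$ with $B_g=R[X]_g$ and patching this chart against its complement, all compatibly with the $t_1$-grading and the automorphism of (2).
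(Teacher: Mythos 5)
Your proposal is correct and is essentially the paper's own argument: the paper likewise takes $f_i=t_i^{p_i}$, shows $f_i^{-1}M\simeq \nz{}\oplus\pz{}$ (via (\cite{Gubeladze1-MR2508056}, Theorem 4.40 and Proposition 2.26)) so that (\cite{BLR-MR796196}, Theorem 5.1) yields $Um(P_{f_i})\neq\emptyset$, and then descends by monic inversion --- the only packaging difference being that the paper inverts monics in both variables and invokes Corollary \ref{c5} to land on $Um(P/A_+P)$ over $B$, whereas you invert only $t_1$ and apply Theorem \ref{mmt4} once, landing on $B[Y]$, with both endings closed by the same BLR result. The non-normality of $B$ that you flag as the main obstacle is not specific to your route: the paper's appeal to Corollary \ref{c5} faces it identically and is covered only by the earlier remark that normality can be relaxed using the conductor technique of (\cite{Bhatwadekar-birational-MR978296}, Lemma 3.2) (cf.\ Corollary \ref{c1}).
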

	\begin{proof}
		Let $P$ be a projective $B[M]$-module of rank $ d + 1$. As $M$ is $\phi$-simplicial$,$ there exists $p_i \in \mathbb{Z}_{>0},$ such that $t_i^{p_i} \in M,$ for $ i =1, 2$. Choose $f_i = t_i^{p_i}$ for all $i$. Since $\mathbb{C}[f_i^{-1}M] \simeq \mathbb{C}[M]_{f_i}$ is normal$,$ from (\cite{Gubeladze1-MR2508056}$,$ Theorem 4.40) we may infer $f_i^{-1}M$ is a normal monoid for all $i$. From (\cite{Gubeladze1-MR2508056}$,$ Proposition 2.26) we can further deduce that $f_i^{-1}M \simeq \nz{} \oplus \pz{}$. By (\cite{BLR-MR796196}$,$ Theorem 5.1)$,$ $Um(P_{f_i}) \neq \emptyset,$ for $i=1, 2$. Thus using Corollary \ref{c5} we can conclude $Um(P) \neq \emptyset$. 
	\end{proof}
	
	\begin{corollary}\label{c1}
		Let R be a ring of dimension $d>1,$ $M \in \mathfrak{M}_n$ a normal $\phi$-simplicial monoid of rank $r > 0$ and $A=R[M]$. Let $P$ be a projective $A$-module of rank $ dim(A) - n,$ and  $J=J(R,P)$ be the Quillen ideal of $P$ of $height(J) > 1$. Assume
		\begin{enumerate}
			\item  $Um(P_f) \neq \emptyset$ for some $f \in R[M]$ monic in $t_1$;
			\item When $n>1,$ $M \in \mathfrak{M}_n$ is such that the automorphism $\widetilde{\eta}$ obtained has the form $\widetilde{\eta}(t_i) \in  t_i + M_1$ for $i > 1$.
		\end{enumerate}
		Then the map $Um(P) \rightarrow Um(P/A_+^1P)$ is surjective.
	\end{corollary}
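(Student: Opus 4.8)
The plan is to recognise that this corollary is nothing but the $d>1$ branch of Theorem \ref{mmt4}, with the normality hypothesis on $R$ replaced by the explicit assumption $ht(J)>1$. First I would locate exactly where normality of $R$ enters the proof of Theorem \ref{mmt4}. It is used in precisely two spots: in Case 1, to conclude that a one-dimensional normal ring is regular (so that Quillen's graded local–global principle applies); and at the opening of Case 2, to show $ht(J)\geq 2$ by arguing that at a height-one prime $\mathfrak{p}$ the localization $R_{\mathfrak{p}}$ is a PID, whence $P_{\mathfrak{p}}$ is free by Theorem \ref{tp2}. Since we now assume $d>1$, Case 1 never occurs, and since we assume $ht(J)>1$ outright, the opening reduction of Case 2 is handed to us by hypothesis. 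So the role of normality is entirely absorbed by the two standing assumptions.

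Having dispensed with both uses of normality, I would then run the argument of Case 2 of Theorem \ref{mmt4} essentially verbatim. Concretely: reduce to $R$ with connected spectrum (valid for any ring carrying a constant-rank projective, as one decomposes along the idempotents); use that $M_0^1\in\mathfrak{M}_{n-1}$ together with $dim(R/J)\leq d-2$ and $rank(\widetilde{P})=d+r-n>(d-2)+r-n$ to invoke Theorem \ref{t6} and obtain surjectivity of $Um(\widetilde{P})\rightarrow Um(\bar P)$; patch the chosen lifts through the Milnor square attached to $(J,A_+^1)$; and apply the Eisenbud--Evans technique of \cite{Plumstead-MR722004}, followed by lifting of transvections from (\cite{Bhat-Roy-MR727374}, Proposition 4.1), to arrange $ht_{\tilde A_{\tilde a}}(O_Q(q'))\geq rank(P)-1=d-1+r-n$.

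The heart of the matter, exactly as in Theorem \ref{mmt4}, is the claim that $O_Q(q')$ contains a monic in $t_1$. I would reproduce the same reasoning: apply Lemma \ref{l1} over $R/J$ (using $dim(R/JR)\leq d-2$) to produce a quasi-monic in $\bigcap\mathfrak{p}_i\cap (R/JR)[t_1^{p_1},\ldots,t_n^{p_n}]$, and then use $M\in\mathfrak{M}_n$ to convert it, through an $(R/JR)[t_1]$-automorphism $\widetilde{\eta}$, into a monic in $t_1$ with coefficients in $(R/JR)[M_0^1]$. Condition $(2)$ is precisely what allows lifting $\widetilde{\eta}$ to $\eta\in Aut_{R[t_1]}(R[M])$ and replacing $A$ by $\eta(A)$ without disturbing the monic $f$ already in $O_P(q)$, while Gubeladze's (\cite{Gubeladze-umodrow1-MR1161570}, Lemma 6.3) still applies because $M$ remains normal and $\phi$-simplicial. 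Finally I would shift $p$ to $p_0=p+t_1^N g q$ for $N$ large, verify $(p_0)_{1+A_+^1}\in Um(P_{1+A_+^1})$ and $(p_0)_{1+J}\in Um(P_{1+J})$ as before, and conclude via Theorem \ref{tp1}.

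The one point I expect to require care, and would double-check, is that no later step of Case 2 secretly re-invokes normality of $R$ (for instance through the structure of $J$ or the integrality of the order-ideal extension). Since the monic-in-$t_1$ claim rests only on normality of the \emph{monoid} $M$ and not of $R$, and the concluding patching via Theorem \ref{tp1} is purely formal, the hypotheses $d>1$ and $ht(J)>1$ should indeed be exactly what is needed in place of normality of $R$.
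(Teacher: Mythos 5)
Your proposal is correct and coincides with the paper's own proof: the paper likewise observes that normality of $R$ enters the proof of Theorem \ref{mmt4} only in the $d=1$ case and in establishing $ht(J) \geq 2$, both of which are supplied here by the hypotheses $d>1$ and $ht(J)>1$, after which the argument of Case 2 runs verbatim. Your additional check that the monic-in-$t_1$ claim depends only on normality of the monoid $M$ (retained as a hypothesis), not of $R$, is exactly the right sanity check and is consistent with the paper.
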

	
	\begin{proof}
		This is a restatement of Theorem \ref{mmt4}, except here the restriction of $R$ normal is removed and additional conditions of $ht(J) > 1$ and $d>1$ is added. In proof of Theorem \ref{mmt4}, the normality of $R$ is used in two places, first in the case $d=1,$ and then in case $d \geq 2$ to show $ht(J) \geq 2$. The rest is identical to the proof of Theorem \ref{mmt4}.
	\end{proof}
	
	The following (also proved in (\cite{MKK-ZinnaMR3874658}$,$ Corollary 3.1)) is a consequence of the above corollary:
	
	\begin{corollary}
		Let $R$ be a ring of dimension $d$ and $A=R[T_1, \ldots, T_n]$. If $P$ is a projective $A$-module of rank $d$ such that $Um(P_f) \neq \emptyset$ for some $f \in A$ monic in $T_1$. If $Um(P/A_+^1P) \neq \emptyset,$ then $Um(P)$.
	\end{corollary}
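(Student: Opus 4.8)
The plan is to realise $A=R[T_1,\dots,T_n]$ as the monoid algebra $R[\pz{n}]$ and to feed it into Corollary~\ref{c1}. Take $M=\pz{}[t_1,\dots,t_n]=\pz{n}$, the free monoid of rank $n$ (hence positive, normal and $\phi$-simplicial), identify $t_i$ with $T_i$, and recall that the free monoid gives $\pz{n}\in\mathfrak{M}_n$. With the $t_1$-grading one has $A_0^1=R[T_2,\dots,T_n]$ and $A_+^1=(T_1)$, so the hypothesis $Um(P/A_+^1P)\neq\emptyset$ is the usual $Um(P/T_1P)\neq\emptyset$; since $\dim A=d+n$, the rank condition $\mathrm{rank}(P)=d=\dim A-n$ is exactly the one in Corollary~\ref{c1}, and $Um(P_f)\neq\emptyset$ with $f$ monic in $t_1=T_1$ is its hypothesis (1). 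For hypothesis (2), the automorphisms witnessing $\pz{n}\in\mathfrak{M}_n$ are the elementary ones $\widetilde{\eta}(t_i)=t_i+t_1^{c_i}$ for $i>1$; as $t_1^{c_i}\in M_1$, the prescribed form $\widetilde{\eta}(t_i)\in t_i+M_1$ holds. Finally, since $nil(R)\subset nil(A)$ and unimodular elements lift across nilpotents, I would reduce to $R$ reduced at the outset.

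Once these structural inputs are in place, Corollary~\ref{c1} delivers the surjectivity of $Um(P)\to Um(P/T_1P)$, and hence $Um(P)\neq\emptyset$, as soon as $d>1$ and $ht(J(R,P))>1$. The bound $ht(J)\geq 1$ is automatic for reduced $R$ (a minimal prime localises $R$ to a field, over which $P$ is free by Quillen--Suslin), so the crux is $ht(J)\geq 2$, i.e.\ extendedness of $P$ in codimension $\leq 1$. Over a non-normal base this may genuinely fail, so the key move is to reduce to normal $R$. Let $S$ be the normalisation of $R$ and $C=\mathrm{Ann}_R(S/R)$ the conductor, so that $(R/C)_{red}=(S/C)_{red}$; for $d\geq 2$, where $\mathrm{rank}(P)\geq 2$, Lemma~\ref{pl1} reduces $Um(P)\neq\emptyset$ to $Um(P\otimes_R S)\neq\emptyset$. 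Over the normal ring $S$ every height-$\leq 1$ localisation is regular (a field or a discrete valuation ring), so $P\otimes S$ is extended there and $ht(J(S,P\otimes S))\geq 2$; moreover $f$ stays monic over $S$ and $Um((P\otimes S)/T_1(P\otimes S))\neq\emptyset$ by base change. Applying the normal monic-inversion theorem (Theorem~\ref{mmt4}, whose proof both secures $ht(J)\geq 2$ and runs the patching) over $S$ with $M=\pz{n}$ then yields $Um(P\otimes S)\neq\emptyset$, whence $Um(P)\neq\emptyset$.

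It remains to treat $d\leq 1$, where the rank is too small for Lemma~\ref{pl1}. When $d=0$ the statement is vacuous: a projective of rank $0$ is zero, so $P/T_1P=0$ carries no unimodular element and the hypothesis is never met. When $d=1$, so $\mathrm{rank}(P)=1$, I would still pass to the normalisation $S$ and apply Theorem~\ref{mmt4} (its regular, $d=1$ case) to obtain $Um(P\otimes S)\neq\emptyset$, but now descend to $R$ by a rank-one $\mathrm{Pic}$-patching across the conductor square, since Lemma~\ref{pl1} requires rank $\geq 2$. I expect the principal obstacle to be precisely the inequality $ht(J)\geq 2$ over a non-normal base --- the single place where normality enters Theorem~\ref{mmt4} --- which is what forces the normalisation-and-descent detour in place of a bare appeal to Corollary~\ref{c1}; a secondary difficulty is this rank-one descent at $d=1$, and, for $R$ whose normalisation fails to be finite, the need to substitute the conductor technique of \cite{Bhatwadekar-birational-MR978296} for an honest normalisation.
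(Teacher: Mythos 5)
Your diagnosis of the crux is correct --- the only obstruction to quoting Corollary \ref{c1} is $ht(J)\geq 2$, which can fail over a non-normal base --- and your verification that $\pz{n}\in\mathfrak{M}_n$ via the elementary automorphisms $\widetilde{\eta}(t_i)=t_i+t_1^{c_i}$ satisfies condition (2) matches the paper. But the repair you propose has a genuine gap: Lemma \ref{pl1} requires $(R/C)_{red}=(S/C)_{red}$ for $C=Ann_R(S/R)$, and while this holds for subintegral extensions such as $R\subset sn(R)$, it is generally \emph{false} for the normalization; e.g.\ for $R=\mathbb{R}+X\mathbb{C}[X]\subset \mathbb{C}[X]=S$ the conductor is $C=X\mathbb{C}[X]$ and $(R/C)_{red}=\mathbb{R}\neq\mathbb{C}=(S/C)_{red}$. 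So you cannot descend $Um(P\otimes_R S)\neq\emptyset$ from the normalization via Lemma \ref{pl1} (and, as you yourself note, $S$ need not even be module-finite over a general noetherian $R$). The paper instead passes to the seminormalization $A'=sn(R)[T_1,\ldots,T_n]$, where Lemma \ref{pl1} and Theorem \ref{pt1} genuinely apply; the price is that $sn(R)$ is not normal, so your argument that height-one localizations are DVRs, hence $ht(J')\geq 2$, is unavailable. For $d\geq 3$ the paper recovers $ht(J')\geq 2$ by a determinant trick: localizing at a height-one prime $\mathfrak{p}$ of $sn(R)$, the rank~$\geq 3$ splitting of \cite{Bhat-Roy-MR727374} gives $S^{-1}P'\simeq S^{-1}(D\otimes A')\oplus(S^{-1}A')^{d-1}$, and Swan's theorem \cite{Swan-MR595029} on Picard groups of polynomial extensions of seminormal rings shows $D\otimes A'$ is extended from $sn(R)$, hence trivial over the local ring $sn(R)_{\mathfrak{p}}$; it then invokes Corollary \ref{c1} (the normality-free variant), not Theorem \ref{mmt4}.

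The low-dimensional cases are a second gap. Your $d=1$ plan --- apply Theorem \ref{mmt4} over the normalization and then descend by a ``rank-one Pic-patching across the conductor square'' --- is only sketched and inherits the same conductor defect; the paper needs no normalization there at all: since $rank(P)=1$ and $Um(P_f)\neq\emptyset$, the module $P_f$ is free, and Quillen's local-global theorem (\cite{Quillen-MR427303}, Theorem 3) combined with Suslin's monic inversion theorem (\cite{Suslin-MR0469905}, Theorem 1), applied to $(R[T_2,\ldots,T_n])[T_1]$, yields $P$ free outright. Likewise $d=2$ (rank two) escapes both your descent and the determinant splitting, and the paper settles it by quoting (\cite{Bhatwadekar-birational-MR978296}, Proposition 3.3) directly. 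In short: your structural setup and your identification of $ht(J)\geq 2$ as the key difficulty are right, but replacing the paper's seminormalization-plus-determinant argument by normalization-plus-Lemma \ref{pl1} breaks the descent step, and the $d\in\{1,2\}$ cases need the separate direct arguments the paper supplies.
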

	
	\begin{proof}
		We may assume $R$ is a reduced ring. If $d = 1,$ then $P_f$ is free. Applying (\cite{Quillen-MR427303}, Theorem 3) and (\cite{Suslin-MR0469905}, Theorem 1) to the ring $(R[T_2, \ldots, T_n])[T_1],$ we get $P$ is free. Let $d=2$. As $rank(P)=2,$ by (\cite{Bhatwadekar-birational-MR978296}, Proposition 3.3), $Um(P) \neq \emptyset$. 
		
		Let $d \geq 3$ and $A'=sn(R)[T_1, \ldots, T_n]$. Denote by $D$ the determinant of $P,$ $P'=P \otimes A'$ and by $J'=J(sn(R), P')$ the Quillen ideal of $P'$. Choose $\mathfrak{p} \in Spec(sn(R))$ of height $1$ and the multiplicative subset $S=sn(R) \smallsetminus \mathfrak{p}$. Further, $dim({sn(R)}_{\mathfrak{p}}) = 1$ and $S^{-1}A' = {sn(R)}_{\mathfrak{p}}[T_1, \ldots, T_n]$.  Since $rank(P) \geq 3,$
		\begin{linenomath*}
			\[
			S^{-1}(P') \simeq S^{-1}(D \otimes A') \oplus (S^{-1}A')^{d-1},
			\]
		\end{linenomath*}
		by \cite{Bhat-Roy-MR727374}. By \cite{Swan-MR595029}, $D \otimes A'$ is extended from $sn(R)$.  This in turn implies  $S^{-1}(D \otimes A')$ is extended from the local ring $sn(R)_{\mathfrak{p}}$ and therefore $ S^{-1}(D \otimes A') = S^{-1}A'.$ Therefore $ht(J') \geq 2$. Assume $ Um(P/A_+^1P) \neq \emptyset$. As
		\begin{linenomath*}
			\[
			P/A_+^1P \simeq P \otimes R[T_2, \ldots, T_n] 
			\]
		\end{linenomath*}
		\begin{linenomath*}
			\[
			P'/{A'}_+^1P' \simeq P \otimes sn(R)[T_2, \ldots T_n],
			\]
		\end{linenomath*}
		we have $Um(P'/ {A'}_+^1P') \neq \emptyset$.  Apply hypothesis of Corollary \ref{c1} to the ring $sn(R)$ and $M=\pz{}[T_1, \ldots, T_n]$. Since conditions (1) and (2) hold, we get $Um(P') \neq \emptyset$. By Lemma \ref{pl1} and Theorem \ref{pt1}, $Um(P) \neq \emptyset$.
	\end{proof}

	\begin{nolinenumbers}	
		\bibliographystyle{abbrv}
		\bibliography{paper2.bib}
	\end{nolinenumbers}
	
\end{document}